\newtheorem{theorem}{\bfseries Theorem}[section]
\newtheorem{lem}[theorem]{\bfseries Lemma}
\newtheorem{pro}[theorem]{\bfseries Proposition}
\theoremstyle{definition}
\newtheorem{cor}[theorem]{\bfseries Corollary}
\newtheorem*{thm1}{{\bfseries Theorem} \ref{thm:inv}}
\newtheorem*{thm2}{{\bfseries Theorem} \ref{HQFT}}
\newtheorem*{thm3}{{\bfseries Theorem} \ref{yetterHQFT}}
\newtheorem*{promax}{{\bfseries Proposition} \ref{pro:max}}
\newcommand{\tr}{\mathrm{tr}}
\newcommand{\Zz}{\mathbb{Z}}
\newcommand{\N}{\mathbb{N}}
\newcommand{\Cc}{\mathbb{C}}
\newcommand{\lc}{\Lambda_{\mathcal{C}}}
\newcommand{\C}{\mathcal{C}}
\newcommand{\pt}{\otimes}
\newcommand{\xd}[1]{#1^{\vee}}
\newcommand{\xg}[1]{{}^{\vee}#1}
\newcommand{\xdd}[1]{#1^{\vee \vee}}
\newcommand{\ho}[2]{Hom_{\C}(#1,#2)}
\newcommand{\ov}[1]{\overline{#1}}
\newcommand{\jag}[1]{Col_G({#1})/\m{G}_{#1}}
\newcommand{\im}{\textrm{im}}
\newcommand{\Bbbi}{\mathbbm{1}}
\renewcommand{\bf}{\bfseries}
\newcommand{\m}{\mathcal}
\newcommand\acl{\left\{ \begin{array}{ccc} }
\newcommand\acr{\end{array}\right\} }
\newcommand\crl{\left< \begin{array}{ccc} }
\newcommand\crr{\end{array}\right> }
\newcommand\brl{\left[ \begin{array}{ccc} }
\newcommand\brr{\end{array}\right] }
\newcommand\parl{\left( \begin{array}{ccc} }
\newcommand\parr{\end{array}\right) }
\newcommand{\fd}{\rightarrow}
\newcommand{\aut}{Aut_{\pt}(1_{\C})}
\newcommand{\sph}{Aut_{\pt}^{\pm}(1_{\C})}
\newcommand{\dc}{\Delta_{\C}}
\newcommand{\xr}[1]{\xrightarrow{#1}}
\newcommand{\cs}[1]{|#1|}
\newcommand{\grad}{\Gamma_{\mathcal{C}}}
\newcommand{\ap}{\mapsto}
\newcommand{\pf}{{\bf Proof : }}
\newcommand{\dis}[1]{\displaystyle{#1}}
\def\commutatif{\ar@{}[dll]|{\circlearrowleft}}
\def\commutatifg{\ar@{}[ddll]|{\circlearrowleft}}
\def\commutatifgg{\ar@{}[ddlll]|{\circlearrowleft}}
\def\commutatifs{\ar@{}[dddll]|{\circlearrowleft}}
\newcommand{\id}{\textrm{id}}
\newcommand{\torust}{S^1\times S^1\times S^1}
\numberwithin{equation}{section}
\title{Decomposition of the Turaev-Viro TQFT}
\author{J\'er\^ome Petit}
\address{Department of Mathematics, Tokyo institute of technology, 2-12-1 Ookayama Meguro-ku, 152-8551 Tokyo, Japan}
\email{petit.j.aa@titech.ac.jp}
\keywords{ Quantum invariants, Turaev-Viro invariant, TQFTs, HQFTs.}
\subjclass[2000]{57N10, 18D10, 20J06}
\thanks{The author has been supported by JSPS Research Fellowship for Young Scientists.}
\begin{document}

%------------------abstract
\begin{abstract}
We show that for every spherical category $\C$ with invertible dimension, the Turaev-Viro TQFT admits a splitting into blocks which come from an HQFT, called the Turaev-Viro HQFT. The Turaev-Viro HQFT has the classifying space $B\grad$ as target space, where $\grad$ is a group obtained from the category $\C$. This construction gives a reformulation of the Turaev-Viro TQFT in terms of HQFT. Furthermore the Turaev-Viro HQFT is an extension of the \emph{homotopical Turaev-Viro invariant} which splits the Turaev-Viro invariant. An application of this result is a description of  the homological twisted version of the Turaev-Viro invariant in terms of HQFT.
\end{abstract}

%------------------

\maketitle

\section{Introduction}

In the early 90's, a new \emph{quantum invariant} of 3-manifolds was introduced : the \emph{Turaev-Viro invariant} \cite{TV}. The original construction involves a quantum group at a root of unity. Barrett and Westburry \cite{BW} generalized the construction to \emph{spherical categories} with invertible dimension in a field $\Bbbk$. A spherical category is a semisimple sovereign category over a commutative ring $\Bbbk$ such that the left and right traces coincide. The dimension of a spherical category is the sum of squares of dimensions of simple
objects. The construction of the Turaev-Viro invariant consists in representing the 3-manifold by a triangulation, coloring the edges with simple objects of the spherical category and then assigning a 6j-symbol to each colored tetrahedron.

In \cite{Tu}, Turaev showed that the Turaev-Viro invariant extends to a \emph{TQFT}. In dimension 2+1, a TQFT assigns to every closed surface a finite dimensional vector space and to every three dimensional cobordism a linear map. More precisely, in dimension 2+1 a TQFT is a symmetric functor from the category of cobordisms of dimension 2+1 to the category of finite dimensional vector spaces. It can be interesting to extend the notion of TQFT to cobordisms and surfaces endowed with additional data. For instance, Blanchet, Habegger, Masbaum and Vogel \cite{BHMV} have showed that the Kauffman bracket extends to a TQFT for surfaces and cobordisms endowed with $p_1$-structures. In 2000, Turaev \cite{THQFT} defined a notion of TQFT for surfaces and cobordisms endowed with homotopy classes of continuous map to a \emph{target space} X,  called \emph{HQFT} (\emph{Homotopical Quantum Field Theory}). Turaev showed that a modular $G$-category, with $G$ an abelian group, gives rise to an HQFT with target space the Eilenberg-Maclane space $K(G,1)$. This HQFT is obtained from an invariant of the pair $(M,\xi)$, with $M$ a 3-manifold and $\xi\in H^1(M,G)$, which splits the Reshetikhin-Turaev invariant. Turaev described this HQFT in terms of other TQFT. The resulting HQFT splits as a product of a standard TQFT and a homological TQFT.

In the same spirit as the work of Turaev and Le \cite{TL} and \cite{THQFT}, we want to describe the Turaev-Viro TQFT in terms of other TQFTs and/or TQFTs with additional data. To fulfill this objective we will define an homotopical invariant called \emph{the homotopical Turaev-Viro invariant}. This invariant will extend to an HQFT called \emph{the Turaev-Viro HQFT}. In opposition to the work of Turaev \cite{THQFT},  we show that the Turaev-Viro TQFT comes from the Turaev-Viro HQFT. Roughly speaking, we obtain a decomposition of the Turaev-Viro invariant into blocks which come from the Turaev-Viro HQFT.

The Turaev-Viro invariant of a closed 3-manifold $M$ is a state-sum indexed by the colorings of a triangulation of $M$. The colorings of a triangulation $T$ are maps from the set of oriented 1-simplexes to the set of scalar objects (up to isomorphism) of a spherical category $\C$. The set of colorings of a triangulation $T$ is denoted $Col(T)$. The Turaev-Viro invariant is~:
$$
TV_{\C}(M)=\dc^{-n_0(T)}\sum_{c\in Col(T)}w_cW_c\in \Bbbk\, ,
$$
where $\dc$ is the dimension of the category, $n_0(T)$ is the number of 0-simplexes of $T$, $w_c$ is a scalar obtained from the coloring of the 1-simplexes and the trace of the category and $W_c$ is a scalar obtained from the 6j-symbols of the category. This invariant can be defined for 3-manifolds with boundary, in which case the Turaev-Viro invariant is a vector. Let $M$ be a 3-manifold with boundary $\Sigma$ and $T_0$ be a triangulation of $\Sigma$, the Turaev-Viro invariant is obtained from the following vector~:
\begin{equation*}
\scriptsize{TV_{\C}(M,c_0)=\dc^{-n_0(T)+n_0(T_0)/2}\sum_{c\in Col_{c_0}(T)}w_cW_c\in V_{\C}(\Sigma,T_0,c_0)\, ,}
\end{equation*}
where $Col(T)_{c_0}$ is the set of colorings of $T$ such that the restriction to $T_0$ is the coloring $c_0$ and $V_{\C}(\Sigma,c_0,T_0)$ is a vector space associated to the triple $(M,c_0,T_0)$. The Turaev-Viro invariant is $$\dis{TV_{\C}(M)=\sum_{c\in Col(T_0)}TV_{\C}(M,c_0)}\, .$$

To study the Turaev-Viro invariant and the TQFT obtained from it, we will assign to each spherical category $\C$  a group $\grad$, which comes from a universal graduation of the category. The group $\grad$ is called \emph{the graduator of $\C$}. To understand this group, the simplest case is the case of \emph{group categories}. If $\C$ is a group category then $\grad$ is the group of isomorphism classes of scalar objects. A group category is a semisimple tensor $\Bbbk$-category such that for every scalar object $X$ there exists an object $Y$ such that $X\pt Y\cong \Bbbi \cong Y\pt X$, with $\Bbbi$ the neutral element for the monoidal structure. For every spherical categories $\C$, we will use the  group $\grad$ to defined an homotopical invariant $HTV_{\C}$. This homotopical invariant will split the Turaev-Viro invariant. More precisely, we observe that every coloring $c$ of a triangulation $T$ of a closed 3-manifold $M$ leads to an homotopy class $x_c\in [M,B\grad]$, where $B\grad$ is the classifying space of the group $\grad$ and $[M,B\grad]$ is the set of homotopy classes of continuous map from $M$ to $B\grad$. These remarks lead to the following homotopical invariant of closed 3-manifolds~:

$$
HTV_{\C}(M,x)=\dc^{-n_0(T)}\sum_{\substack{c\in Col(T)\\x_c=x}}w_cW_c\, ,
$$
where $x\in [M,B\grad]$. The invariant $HTV_{\C}$ is \emph{the homotopical Turaev-Viro invariant}. We define this invariant for manifolds with boundary. Roughly speaking, for every 3-manifold $M$ with boundary $\Sigma$ endowed with a triangulation $T_0$ and for every coloring $c_0$ of $T_0$, we associate an homotopy class $x_{c_0}\in [\Sigma,B\grad]$ and from every coloring $c\in Col_{c_0}(T)$, we associate an homotopy class $x_c\in [M,B\grad]$ such that $x_{\Sigma}$ the homotopy class of the restriction of $x_c$ to $\Sigma$ is $x_{c_0}$. These remarks lead to the boundary version of the homotopical Turaev-Viro invariant~:
$$
HTV_{\C}(M,c_0,x)=\dc^{-n_0(T)+n_0(T_0)/2}\sum_{\substack{c\in Col_{c_0}(T)\\x_c=x\\x_{\Sigma}=x_{c_0}}}w_cW_c\, ,
$$
for every $x\in [M,B\grad]$ such that the homotopy of its restriction to $\Sigma$ is $x_{c_0}$.

The homotopical Turaev-Viro invariant splits the Turaev-Viro invariant~:
\begin{thm1}
Let $M$ be a 3-manifold, $\Sigma$ be the boundary of $M$ and $T_0$ be a triangulation of $\Sigma$. For every coloring $c_0\in Col(T_0)$, we have~:
$$TV_{\C}(M,c_0)=\sum_{\substack{x\in [M,B\grad]\\ x_{\Sigma}=x_{c_0}}}HTV_{\C}(M,c_0,x) \in V_{\C}(\Sigma,T_0,c_0)\,$$
and $HTV_{\C}(M,c_0,x)$ an invariant of the triple $(M,c_0,x)$. If the 3-manifold $M$ is closed, we obtain~:
$$
TV_{\C}(M)=\sum_{x\in [M,B\grad]}HTV_{\C}(M,x)\,.
$$
\end{thm1}

We prove that the homotopical Turaev-Viro invariant extends to an HQFT $\m{H}_{\C}$ with target space the classifying space $B\grad$. The HQFT $\m{H}_{\C}$ is called \emph{the Turaev-Viro HQFT}. The Turaev-Viro HQFT and the splitting given in Theorem \ref{thm:inv} leads to the main result of this article~:
\begin{thm2}
Let $\C$ be a spherical category. The Turaev-Viro TQFT  $\m{V}_{\C}$ is obtained from the Turaev-Viro HQFT $\m{H}_{\C}$~:
$$
\m{V}_{\C}(\Sigma)=\bigoplus_{x\in [\Sigma,B\grad]}\m{H}_{\C}(\Sigma,x)\,
$$
for every  closed surface $\Sigma$.
\end{thm2}

In the case of group categories defined for an abelian group this splitting is maximal in a sense that every block obtained from the above splitting is a one dimensional vector space :

\begin{promax}
  Let  $G$ an abelian group, $\alpha\in H^3(G,\Bbbk^*)$, $\C_{G,\alpha}$ be a group category, $g$ be a positive integer and $\Sigma_g$ be a closed surface of genus $g$, we have~:
  $$
  \m{V}_{\C_{G,\alpha}}(\Sigma_g)=\bigoplus_{x\in [\Sigma_g,BG]}\m{H}_{\C_{G,\alpha}}(\Sigma_g,x)\,
  $$
with $\m{H}_{\C_{G,\alpha}}(\Sigma_g,x)=\Bbbk$ for every $x\in[\Sigma_g,BG]$.
\end{promax}

An application of this work is a description of the homological twisted Turaev-Viro invariant, defined by Yetter \cite{homyetter} in terms of HQFT. The homological twisted Turaev-Viro invariant is an invariant for the pair $(M,\alpha)$, with $M$ a 3-manifold and $\alpha\in H_1(M,A)$, where $A$ is the group of monoidal automorphisms of the identity functor $1_{\C}$. The homological twisted Turaev-Viro invariant is given by the formula~:
$$
Y_{\C}(M,\alpha)=\dc^{n_0(T)}\sum_{c\in Col(T)}(\alpha:c)W_c\, ,
$$
where $(\alpha:c)$ is defined as follow~: we represent $\alpha$ by a map $b$ from the set of oriented edges of $T$ to $A$ and we set~:
$$
(\alpha: c)=\prod_{e\in T^1}b(e)_{c(e)}
$$
In the original paper of Yetter \cite{homyetter}, this invariant was defined for a semisimple braided $\Bbbk$-tensor category. We extend the construction to spherical categories and we prove that the invariant $Y_{\C}$ comes from an HQFT.
\begin{thm3}
Let $\C$ be a spherical category, $M$ be a 3-manifold. For every $h\in H_1(M,\aut)$, we have~:
$$
Y_{\C}(M,h)=\sum_{x\in[M:B\grad]}(h:x)HTV_{\C}(M,x)\, ,
$$
with $\dis{(h:x)=\prod_{e\in T^1}\alpha^e(c(e))}$, $\alpha$ a representative of $h$ and $c\in Col_x(T)$.
\end{thm3}
The rest of the paper is organized as follows. In Section \ref{sec:rappel}, we review several facts about
monoidal categories and we define the universal graduation of semisimple tensor categories. Section \ref{sec:TVconstruc} recalls the construction of the Turaev-Viro invariant. In Section \ref{sec:HTVconstruc}, we define the homotopical Turaev-Viro invariant and we prove that the homotopical Turaev-Viro invariant splits the Turaev-Viro invariant (Theorem \ref{thm:inv}). In Section \ref{sec:calcul}, we compute the homotopical Turaev-Viro invariant for the sphere $S^3$, the 3-torus $S^1\times S^1\times S^1$ and lens spaces. We compute for group categories and the quantum group $U_q(\frak{sl}_2)$ with $q$ a root of unity. In Section \ref{sec:splitting}, we show that for every spherical category $\C$ the Turaev-Viro TQFT comes from an HQFT with target space the classifying space $B\grad$ (Theorem \ref{HQFT}). The proof is in two steps. First, we show that the homotopical Turaev-Viro invariant splits the Turaev-Viro TQFT into blocks. Then we show that these blocks come from an HQFT. We show that the splitting obtained is maximal in the case of group categories defined for an abelian group (Proposition \ref{pro:max}). We end this section with a computation of the HQFT for the torus $S^1\times S^1$ in the case of the quantum group $U_q(\frak{sl}_2)$ with $q$ a root of unity. In section \ref{sec:twist}, we reformulate the homological twisted Turaev-Viro invariant in terms of HQFT.  In Section \ref{sec:table}, we give some values of the homotopical Turaev-Viro invariant for group categories.

\subsection*{Notations and conventions}
Throughout this paper, $\Bbbk$ will be a commutative, algebraically closed and characteristic zero field. Unless otherwise specified, categories are assumed to be small and monoidal categories are assumed to be strict. We denote by $\Bbbi_{\C}$ the unit object of monoidal category $\C$. If there is no ambiguity on the choice of the category then we denote by $\Bbbi$ the unit object.

If $\C$ is a monoidal category, we denote by $\aut$ the abelian group of monoidal automorphisms of the identity functor $1_{\C}$.

Throughout this paper, we use the following notation. For an oriented manifold $M$, we denote by $\overline{M}$ the same manifold with the opposite orientation.

\section{Categories}\label{sec:rappel}

In this section, we review a few general facts about categories with structure, which we use intensively throughout this text.

\subsection*{Autonomous categories}

Let $\C$ be a monoidal category. A \emph{duality} of $\C$ is a data $(X,Y,e,h)$, where $X$ and $Y$ are objects of $\C$ and $e : X\pt Y \fd \Bbbi$ (\emph{evaluation}) and  $h : \Bbbi \fd Y\pt X$ (\emph{coevaluation}) are morphisms of $\C$, satisfying~:
$$
(e\pt \id_X)(\id_X\pt h)=\id_X\qquad \mbox{and}\qquad (\id_Y\pt e)(h\pt \id_Y)=\id_Y\,.
$$
If $(X,Y,e,h)$ is a duality, we say that $(Y,e,h)$ is \emph{a right dual of $X$}, and $(X,e,h)$ is \emph{a left dual of $Y$}. If a right or left dual of an object exists, it is unique up to unique isomorphism.

A \emph{right autonomous} (resp. \emph{left autonomous}, resp. \emph{autonomous}) category is a monoidal category for which every object admits a right dual (resp. a  left dual, resp. both a left and a right dual). In the literature, autonomous categories are also called rigid categories.

If $\C$ has right duals, we may pick a right dual $(\xd{X},e_X,h_X)$ for each object $X$. This defines a monoidal functor $\xd{?} : \C^{op}\fd \C$, where $\C^{op}$ denotes the category with opposite composition and tensor products. This monoidal functor is called \emph{right dual functor}. Notice that the actual choice is innocuous, in the sense that different choices of right duals define canonically isomorphic right dual functors.

Similarly a choice of left duals $(\xg{X},\epsilon_X,\eta_X)$ for each object $X$ defines a monoidal functor $\xg{?} : \C^{op}\fd\C$, called the \emph{left dual functor}.

In particular, the right dual functor leads to the \emph{double right dual functor} $\xdd{?} : \C\fd \C$ defined by $X\ap \xdd{X}$ and $f\ap \xdd{f}$, which is a monoidal functor.

\medskip

\subsection*{Sovereign categories}

A \emph{sovereign structure} on a right autonomous category $\C$ consists in the choice of a right dual for each object of $\C$ together with a monoidal isomorphism $\phi : 1_{\C} \fd \xdd{?}$, where $1_{\C}$ is the identity functor of $\C$. Two sovereign structures are \emph{equivalent} if the corresponding monoidal isomorphisms coincide via the canonical identification of the double dual functors.

A \emph{sovereign category} is a right autonomous category endowed with an equivalence class of sovereign structures.

Let $\C$ be a sovereign category, with chosen right duals $(\xd{X},e_X,h_X)$ and sovereign isomorphisms $\phi_X : X\fd \xdd{X}$. For each object $X$ of $\C$, we set :

$$
\epsilon_X = e_{\xd{X}}(\id_{\xd{X}}\pt \phi_X) \qquad \textrm{and} \qquad \eta_X = (\phi_X^{-1}\pt\id_{\xd{X}})h_{\xd{X}}\, .
$$

Then $(\xd{X},\epsilon_X,\eta_X)$ is a left dual of $X$. Therefore $\C$ is autonomous. Moreover the right left functor $\xg{?}$ defined by this choice of left duals coincides with $\xd{?}$ as a monoidal functor. From now on, for each sovereign category $\C$ we will make this choice of duals.

\medskip

The sovereign structures on a sovereign category are given by the group $\aut$.

\begin{pro}\label{pro:souv}
 Let $\C$ be a sovereign category and $\phi_0$ be the sovereign structure. The map $$\phi \ap \phi_0^{-1}\phi$$ is a bijection between the set of sovereign structures on $\C$ and the group $\aut$ of monoidal automorphisms of the functor identity $1_{\C}$.
\end{pro}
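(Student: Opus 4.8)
The plan is to exhibit the inverse map explicitly and check that the two composites are the identity, using only the definitions of a sovereign structure and of $\aut$. First I would fix the reference sovereign structure $\phi_0 : 1_{\C} \fd \xdd{?}$, which is by hypothesis a monoidal natural isomorphism. Given an arbitrary sovereign structure $\phi : 1_{\C} \fd \xdd{?}$ (for the \emph{same} chosen right duals, so that the target functor $\xdd{?}$ is literally the same functor, not merely canonically isomorphic), the assignment $\phi \ap \phi_0^{-1}\phi$ makes sense: $\phi_0^{-1}$ is a monoidal natural isomorphism $\xdd{?} \fd 1_{\C}$, so the composite $\phi_0^{-1}\phi$ is a monoidal natural automorphism of $1_{\C}$, i.e.\ an element of $\aut$. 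This shows the map is well defined.

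Next I would construct the candidate inverse. Given $\psi \in \aut$, set $\phi \mapsto \phi_0 \psi$; since $\phi_0$ is a monoidal isomorphism $1_{\C}\fd\xdd{?}$ and $\psi$ a monoidal automorphism of $1_{\C}$, the composite $\phi_0\psi$ is again a monoidal isomorphism $1_{\C}\fd\xdd{?}$, hence a sovereign structure (with the same chosen duals). The two composites are then immediate: $(\phi_0^{-1})(\phi_0\psi) = \psi$ and $(\phi_0)(\phi_0^{-1}\phi) = \phi$, using only associativity of vertical composition of natural transformations and the fact that $\phi_0^{-1}$ is a genuine two-sided inverse of $\phi_0$. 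Therefore $\phi\ap\phi_0^{-1}\phi$ is a bijection.

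The one point that requires a little care — and the place I expect the only real subtlety to sit — is the passage between ``sovereign structures'' and ``equivalence classes of sovereign structures'': the proposition should be read with the chosen right duals fixed once and for all, so that two sovereign structures differing only by the canonical identification of double-dual functors are already identified, and with that convention the set of sovereign structures is exactly a torsor under $\aut$. I would make this convention explicit at the start of the argument (it is consistent with the sentence preceding the proposition, ``From now on, for each sovereign category $\C$ we will make this choice of duals''), after which the verification is the routine torsor computation above. Everything else is formal manipulation of monoidal natural transformations, so there is no genuine obstacle beyond fixing this bookkeeping correctly.
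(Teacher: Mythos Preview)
Your argument is correct: once the right duals are fixed, a sovereign structure is precisely a monoidal natural isomorphism $1_{\C}\to\xdd{?}$, and the standard torsor computation you give (with inverse $\psi\mapsto\phi_0\psi$) establishes the bijection. There is nothing to compare against, however, because the paper states Proposition~\ref{pro:souv} without proof; the closest analogue with a written proof is Proposition~\ref{pro:sphgrad} for spherical structures, whose argument is compatible with yours and in fact presupposes exactly the torsor picture you describe.
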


The sovereign categories are an appropriate categorical setting for a good notion of trace. Let $\C$ be a sovereign category and $X$ be an object of $\C$. For each endomorphism $f\in Hom_{\
C}(X,X)$,
$$\tr_l(f)=\epsilon_X (\id_{\xd{X}}\pt f)h_X \in Hom_{\C}(\Bbbi,\Bbbi)$$
is the \emph{left trace} of $f$ and
$$\tr_r(f)=e_X(f\pt \id_{\xd{X}})\eta_X\in Hom_{\C}(\Bbbi,\Bbbi)$$
is the \emph{right trace} of $f$. We denote by $\dim_r(X)=\tr_r(\id_X)$ (resp. $\dim_l(X)=\tr_l(\id_X)$) the \emph{right dimension} (resp.
\emph{left dimension}) of $X$.

\subsection*{Tensor categories}

By a \emph{$\Bbbk$-linear category}, we shall mean a category for which the set of morphisms are $\Bbbk$-spaces, the composition is $\Bbbk$-bilinear
, there exists a null object and for every objects $X$, $Y$ the direct sum $X\oplus Y$ exists in $\C$.

A $\Bbbk$-linear category is \emph{abelian} if it admits finite direct sums, every morphism has a kernel and a cokernel, every monomorphism is the kernel of its cokernel, every epimorphism is the cokernel of its kernel, and every morphism is expressible as the composite of an epimorphism followed by a monomorphism.

An object $X$ of an abelian $\Bbbk$-category $\C$ is \emph{scalar} if $Hom_{\C}(X,X)\cong \Bbbk$.

A \emph{tensor category over $\Bbbk$} is an autonomous category endowed with a structure of $\Bbbk$-linear abelian category such that the tensor product is $\Bbbk$-bilinear and the unit object is a scalar object.

A $\Bbbk$-linear category is \emph{semisimple} if :
\begin{itemize}
\item[(i)] every object of $\C$ is a finite direct sum of scalar objects,
\item[(ii)] for every scalar objects $X$ and $Y$, we have : $X\cong Y$ or $\ho{X}{Y}=0$\,.
\end{itemize}

A \emph{semisimple $\Bbbk$-category} is a semisimple abelian $\Bbbk$-linear category and every simple object is a scalar object. Notice that in a semisimple abelian $\Bbbk$-category every scalar object is a simple object. By a \emph{finitely semisimple $\Bbbk$-category} we shall mean a semisimple $\Bbbk$-category which has finitely many isomorphism classes of scalar objects. The set of isomorphism classes of scalar objects of an abelian $\Bbbk$-category $\C$ is denoted by $\lc$.

%
% A \emph{finitely semisimple tensor $\Bbbk$-category} is finitely semisimple $\Bbbk$-category and a tensor $\Bbbk$-category.

\subsection*{Graduations}

Let $\C$ be semisimple tensor $\Bbbk$-category and $G$ be a group. A \emph{$G$-graduation of $\C$} is a map $p : G\fd \lc$ satisfying :
\begin{itemize}
\item[$\bullet$] $p(Z)=p(X)p(Y)$, for every scalar objects $X,Y,Z$ such that $Z$ is a subobject of $X\pt Y$.
\end{itemize}

A \emph{graduation of $\C$} is a pair $(G,p)$, where $G$ is group and $p$ is a $G$-graduation of $\C$.

By induction, the multiplicity property of a graduation can be extended to $n$-terms.

\begin{pro}\label{pro:graduation}
Let $\C$ be a semisimple tensor $\Bbbk$-category. There exists a graduation $(\grad,\cs{?})$ of $\C$ satisfying the following universal property : for every graduation $(G,p)$ of $\C$, there exists a unique group morphism $f : \grad \fd G$ such that the diagram~:
$$
\xymatrix{\lc \ar[rr]^{\cs{?}} \ar[dr]_{p} && \grad \ar[dl]^{f}\commutatif\\
&G& }
$$
commutes.
\end{pro}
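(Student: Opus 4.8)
The plan is to construct $\grad$ as a quotient of the free abelian-free group on the set $\lc$ — actually, since graduations land in arbitrary (not necessarily abelian) groups, I would take the free group $F$ on the set $\lc$ of isomorphism classes of scalar objects, and then impose exactly the relations forced by the defining property of a graduation. Concretely, let $N$ be the normal subgroup of $F$ generated by all elements of the form $[Z][X]^{-1}[Y]^{-1}$, where $X,Y,Z$ range over scalar objects with $Z$ a subobject of $X\pt Y$ (here $[X]$ denotes the generator of $F$ corresponding to the class of $X$). Set $\grad = F/N$ and let $\cs{?}\colon \lc \fd \grad$ send the class of $X$ to the image of $[X]$. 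By construction $\cs{?}$ satisfies the multiplicativity condition, so $(\grad,\cs{?})$ is a graduation.

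Next I would verify the universal property. Given any graduation $(G,p)$, the map $\lc \fd G$, $X \mapsto p(X)$, extends uniquely to a group morphism $\tilde f\colon F \fd G$ by the universal property of the free group. Because $p$ is a graduation, every generator $[Z][X]^{-1}[Y]^{-1}$ of $N$ is sent by $\tilde f$ to $p(Z)p(Y)^{-1}p(X)^{-1} = e$ (using $p(Z)=p(X)p(Y)$), hence $N \subseteq \ker \tilde f$, so $\tilde f$ descends to a morphism $f\colon \grad \fd G$ with $f\circ \cs{?} = p$, i.e.\ the triangle commutes. Uniqueness of $f$ is immediate: the image of $\cs{?}$ generates $\grad$ (the generators $[X]$ generate $F$, hence their images generate $\grad$), so any morphism out of $\grad$ is determined by its values on $\cs{?}(\lc)$.

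I do not expect a serious obstacle here — the statement is a routine "free object modulo the evident relations" construction, of exactly the kind that always produces a universal object. The only point that deserves a line of care is that the defining relation of a graduation is stated for a single containment $Z \subseteq X\pt Y$, and one should note (as the paper already does, "the multiplicity property can be extended to $n$-terms") that the subgroup $N$ automatically forces the multi-term version $\cs{Z} = \cs{X_1}\cdots\cs{X_n}$ whenever $Z \subseteq X_1\pt\cdots\pt X_n$; this follows by induction using semisimplicity (decompose $X_1\pt\cdots\pt X_{n-1}$ into scalar summands and apply the relation twice). If one wanted $\grad$ to be functorial or to match the "universal graduation" description used later, one would also remark that $\grad$ depends only on the fusion rules of $\C$, but that is not needed for the statement as given.
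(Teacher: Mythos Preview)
Your approach is correct and takes a genuinely different route from the paper's. You build $\grad$ as the free group on $\lc$ modulo the relations forced by the graduation axiom; the paper instead defines an explicit equivalence relation on $\lc$ (two scalar objects are equivalent iff they both occur as subobjects of a common tensor product $T_1\pt\cdots\pt T_n$), checks transitivity by hand, and then endows the quotient set $\lc/{\sim}$ with a group law $\cs{X}\cdot\cs{Y}=\cs{Z}$ for any scalar $Z\hookrightarrow X\pt Y$. Your construction makes the universal property automatic, whereas the paper must verify it as a separate step; conversely, the paper's description makes it immediate that $\cs{?}\colon\lc\to\grad$ is \emph{surjective} and gives a concrete picture of the elements of $\grad$ as classes of simple objects, which is exactly what is exploited in the later computations for group categories and $U_q(\mathfrak{sl}_2)$. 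In your framework surjectivity is a short extra remark: since $\Bbbi\hookrightarrow X\pt\xd{X}$ forces $\cs{\xd{X}}=\cs{X}^{-1}$, and since every $X\pt Y$ has some scalar summand, the image of $\cs{?}$ is already closed under products and inverses, hence equals $\grad$.

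One small slip to fix: your generators of $N$ are written $[Z][X]^{-1}[Y]^{-1}$, but under $\tilde f$ this element maps to $p(Z)p(X)^{-1}p(Y)^{-1}=p(X)p(Y)p(X)^{-1}p(Y)^{-1}$, a commutator rather than the identity. You want $[X][Y][Z]^{-1}$ (equivalently $[Z][Y]^{-1}[X]^{-1}$); your verification line already uses the correct ordering $p(Z)p(Y)^{-1}p(X)^{-1}$, so this is only a typo in the definition of $N$.
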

\pf

To build $\grad$, we define an equivalence relation $\sim$ on $\lc$. Let $X$ and $Y$ be two scalar objects; $X\sim Y$ if and only if there exists a finite sequence of scalar objects $T_1,..., T_n$ such that $X$ and $Y$ are subobjects of $T_1\pt ... \pt T_n$. This relation is reflexive and symmetric. Let us show that this relation is transitive. Let $X,Y,Z$ be scalar objects such that $X\sim Y$ and $Y\sim Z$. Thus there exists two tensor products of scalar objects $A$ and $B$ such that $X$ and $Y$ are subobjects of $A$, and  $Y$ and $Z$ are subobjects of $B$. To prove the equivalence $X\sim Z$, we will show that $A$ and $B$ are subobjects of $B\pt \xd{B} \pt A$. Since $\Bbbi$ is a subobject of $B \pt \xd{B}$, it follows that $A$ is a subobject of $B\pt \xd{B}\pt A$. For every scalar object $X$, there exists a right dual $\xd{X}$ and a left dual $\xg{X}$. Since the category $\C$ is semisimple tensor $\Bbbk$-category, $\xd{X}$ and $\xg{X}$ are isomorphic. It follows that : $B\cong B\pt \Bbbi\hookrightarrow B\pt \xd{X}\pt X \cong B\pt \xg{X}\pt X$ and thus $B$ is a subobject of $B\pt \xd{B}\pt A$.

We denote by $\grad$ the quotient of $\lc$ by $\sim$ and by $\cs{?} : \lc \fd \grad$ the canonical surjection. We define an internal law on $\grad$ in the following way : if $X,Y$ are scalar objects of $\C$ and $Z$ is a scalar subobject of $X\pt Y$, we set $\cs{X}.\cs{Y}=\cs{Z}$. This law is well defined, associative and $\cs{\Bbbi}$ is the neutral element. For every scalar object $X$, $\cs{\xd{X}}$ is the inverse of $\cs{X}$, thus $\grad$ is a group and by construction $\cs{?}$ is a $\grad$-graduation of $\C$.

Let us show the universal property of $(\grad,\cs{?})$. Let $(G,p)$ be a graduation of $\C$, if the scalar objects $X$ and $Y$ equivalent for the relation $\sim$ then there exists a tensor product of scalar objects $A=T_1\pt ... \pt T_n$ such that $X$ and $Y$ are subobjects of $A$. Since $p$ is a $G$-graduation, we have : $p(X)=p(Y)=p(T_1)....p(T_n)$. Thus there exists a unique map $f : \grad \fd G$ such that $f\cs{?}=p$. For every scalar objects $X,Y$ and for every scalar subobject $Z$ of $X\pt Y$, we have $f(\cs{X})f(\cs{Y})=p(X)p(Y)=p(Z)=f(\cs{Z})=f(\cs{X}\cs{Y}),$ thus $f$ is a group morphism.

\qed

Let $\C$ be a semisimple tensor $\Bbbk$-category, the group $\grad$ which defines the universal graduation $(\grad,\cs{?})$ is called \emph{the graduator of $\C$}.

\begin{pro}\label{pro:gradaut}
Let $\C$ be a semisimple tensor $\Bbbk$-category. There exists a canonical isomorphism between $\aut$ and the group of group morphisms from $\grad$ to $\Bbbk^*$.
\end{pro}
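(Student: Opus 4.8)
The plan is to identify both groups with the set of $\Bbbk^*$-graduations of $\C$ and to invoke the universal property of Proposition \ref{pro:graduation}.

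\emph{From $\aut$ to $\mathrm{Hom}(\grad,\Bbbk^*)$.} Given $\theta\in\aut$ and a scalar object $X$, we have $\ho{X}{X}\cong\Bbbk$, so the invertible endomorphism $\theta_X$ is $\theta_X=\lambda_\theta(X)\,\id_X$ for a unique $\lambda_\theta(X)\in\Bbbk^*$; naturality of $\theta$ along isomorphisms shows $\lambda_\theta(X)$ depends only on the class $\cs{X}\in\lc$, giving a map $\lambda_\theta:\lc\fd\Bbbk^*$. I would then check that $\lambda_\theta$ is a $\Bbbk^*$-graduation of $\C$: one has $\lambda_\theta(\Bbbi)=1$ since $\theta_{\Bbbi}=\id$, and if $Z$ is a scalar subobject of $X\pt Y$ with inclusion $\iota:Z\fd X\pt Y$, then the monoidality $\theta_{X\pt Y}=\theta_X\pt\theta_Y=\lambda_\theta(X)\lambda_\theta(Y)\,\id_{X\pt Y}$ together with naturality $\theta_{X\pt Y}\,\iota=\iota\,\theta_Z$ forces $\lambda_\theta(Z)=\lambda_\theta(X)\lambda_\theta(Y)$ (using $\iota\neq 0$). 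By Proposition \ref{pro:graduation} there is a unique group morphism $f_\theta:\grad\fd\Bbbk^*$ with $f_\theta\circ\cs{?}=\lambda_\theta$, and $\theta\ap f_\theta$ is a group morphism $\aut\fd\mathrm{Hom}(\grad,\Bbbk^*)$ because the composition law on both sides is pointwise multiplication.

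\emph{From $\mathrm{Hom}(\grad,\Bbbk^*)$ to $\aut$.} Given a group morphism $g:\grad\fd\Bbbk^*$, set $\lambda(X)=g(\cs{X})$ for each scalar $X$ (this depends only on the isomorphism class). I would define $\theta^g$ by $\theta^g_X=\lambda(X)\,\id_X$ on scalar objects and, using semisimplicity, by $\theta^g_W=\bigoplus_i\lambda(X_i)\,\id_{X_i}$ for a chosen decomposition $W\cong\bigoplus_i X_i$ into scalar objects. The key verification is naturality: any morphism $u:W\fd W'$ between semisimple objects decomposes into components between scalar objects, a component between two non-isomorphic scalars is zero, and since $\lambda$ is constant on isomorphism classes the remaining components commute with $\theta^g$; this simultaneously shows $\theta^g_W$ is independent of the chosen decomposition. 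Monoidality comes from the construction of $\grad$: $\theta^g_{\Bbbi}=g(\cs{\Bbbi})\,\id=\id$, and for scalar $X,Y$ every scalar summand $Z$ of $X\pt Y$ satisfies $\cs{Z}=\cs{X}\cs{Y}$, hence $\lambda(Z)=g(\cs{X})g(\cs{Y})=\lambda(X)\lambda(Y)$, so $\theta^g_{X\pt Y}=\lambda(X)\lambda(Y)\,\id_{X\pt Y}=\theta^g_X\pt\theta^g_Y$; the general case follows by bilinearity of $\pt$ and additivity. Since each $\lambda(X)\in\Bbbk^*$, $\theta^g$ is invertible, so $\theta^g\in\aut$, and $g\ap\theta^g$ is again a group morphism.

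\emph{Conclusion.} I would finish by checking the two constructions are mutually inverse: $f_{\theta^g}=g$ by the uniqueness clause of Proposition \ref{pro:graduation}, since both compose with $\cs{?}$ to the map $X\ap g(\cs{X})$; and $\theta^{f_\theta}=\theta$ because a monoidal automorphism of $1_\C$ is determined by its scalar components (by the same naturality argument used above), which agree by construction. All maps involved are canonical, yielding the asserted isomorphism $\aut\cong\mathrm{Hom}(\grad,\Bbbk^*)$. The only step that is not purely formal is the converse construction — showing $\theta^g$ is a well-defined natural transformation and is monoidal — but it reduces entirely to semisimplicity together with the multiplicativity $\cs{Z}=\cs{X}\cs{Y}$ built into $\grad$, and is essentially the same bookkeeping as in Proposition \ref{pro:souv}.
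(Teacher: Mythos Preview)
Your proof is correct and follows essentially the same route as the paper: both arguments identify $\aut$ with the set of $\Bbbk^*$-graduations of $\C$ via $\theta_X=\lambda_\theta(X)\,\id_X$ on scalars, and then invoke the universal property of $(\grad,\cs{?})$ from Proposition~\ref{pro:graduation}. You have simply spelled out in more detail the steps the paper leaves implicit (naturality via subobject inclusions, the extension to arbitrary objects by semisimplicity, and the explicit verification that the two maps are mutually inverse). One minor notational slip: early on you write ``the class $\cs{X}\in\lc$'', but in the paper $\cs{?}$ denotes the projection $\lc\to\grad$, not the map to $\lc$; this does not affect the argument.
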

\pf
Let $\phi\in \aut$, for every scalar object $X$, we have $\phi_X =\epsilon_X \id_X$, with $\epsilon_X \in \Bbbk^*$. Since $\phi$ is a monoidal morphism, we have for every scalar objects $X, Y$, $\phi_{X\pt Y}=\phi_X\pt \phi_Y$. It follows that $\epsilon$ is a $\Bbbk^*$-graduation of $\C$. Conversely, let $p$ be a $\Bbbk^*$-graduation of $\C$. Since $\C$ is a semisimple tensor $\Bbbk$-category, $\phi_X=p(X)\id_X$, for every scalar object $X$, defines a natural isomorphism $\phi : 1_{\C} \fd 1_{\C}$. Moreover, we have $\phi_{X\pt Y}=\phi_X\pt \phi_Y$ for every scalar objects $X,Y$, thus $\phi$ is a monoidal isomorphism. Using the universal property of the group $\grad$ we can conclude.

\qed

It follows that the graduator $\grad$ of a sovereign category $\C$ describes the sovereign structures of this category.

\subsection*{Examples}
\subsubsection*{Group categories}
Let $\C$ be a monoidal category. An object $X$ of $\C$ is \emph{invertible} if there exists an object $Y$ of $\C$ (\emph{inverse of $X$}) such that $X\pt Y \cong \Bbbi \cong Y\pt X$. Notice that in a right autonomous category if $Y$ is the inverse of $X$ then $Y$ is isomorphic to $\xd{X}$. The group of invertible elements of $\C$ is called \emph{the Picard group of $\C$}.

A \emph{group category} is a semisimple tensor $\Bbbk$-category such that every scalar object is invertible. The following theorem gives a classification of group categories~:

\begin{theorem}[section 7.5 \cite{FK}]\label{thm:FK}
The datum of a group category is equivalent to the data of a finite group $G$ and a cohomology class $\alpha\in H^3(G,\Bbbk^*)$.
\end{theorem}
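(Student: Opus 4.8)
The plan is to prove the equivalence in the classical way: from a group category $\C$ extract a finite group together with a normalized $3$-cocycle, from such data build the group category $\C_{G,\alpha}$ of $\alpha$-twisted $G$-graded vector spaces, and check that the two constructions are mutually inverse up to $\Bbbk$-linear monoidal equivalence on one side and up to coboundaries and group isomorphisms on the other.

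\textbf{From $\C$ to $(G,[\alpha])$.} First I would show that $G:=\lc$, the set of isomorphism classes of scalar objects, is a finite group under $\pt$. The only non-formal point is that the tensor product of two scalar objects is again scalar: if $X$ is invertible, its inverse is $\xd{X}$ (by the remark preceding the statement), so by semisimplicity $\ho{X\pt Y}{X\pt Y}\cong\ho{Y}{\xd{X}\pt X\pt Y}\cong\ho{Y}{Y}\cong\Bbbk$. Hence $\cs{?}$ is itself a group law on $\lc$, with neutral element $\cs{\Bbbi}$ and inverse $\cs{\xd{X}}$; as in \cite{FK} the category is taken finitely semisimple, so $G$ is finite. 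Next, choose for each $g\in G$ a representative object $X_g$ with $X_e=\Bbbi$, and for each pair $(g,h)$ an isomorphism $\lambda_{g,h}:X_g\pt X_h\fd X_{gh}$, normalized so that $\lambda_{e,g}$ and $\lambda_{g,e}$ are the unit constraints. Comparing the two composites $X_g\pt X_h\pt X_k\fd X_{ghk}$ built from the $\lambda$'s and from the associativity constraint of $\C$ yields a scalar $\alpha(g,h,k)\in\Bbbk^*$, since $\ho{X_{ghk}}{X_{ghk}}\cong\Bbbk$. The pentagon axiom for $\C$ then says exactly that $\alpha\in\z{G}$, and the normalization makes it a normalized cocycle; changing the $\lambda_{g,h}$'s or the representatives $X_g$ only alters $\alpha$ by a coboundary, so the class $[\alpha]\in\co{G}$ is well defined.

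\textbf{From $(G,\alpha)$ to $\C_{G,\alpha}$, and mutual inverseness.} Take $\C_{G,\alpha}$ to be the category of finite-dimensional $G$-graded $\Bbbk$-vector spaces with grading-preserving maps, tensor product $(V\pt W)_g=\bigoplus_{hk=g}V_h\pt W_k$, unit object the line in degree $e$, and associativity constraint acting on $\delta_g\pt\delta_h\pt\delta_k$ (where $\delta_g$ is the simple object in degree $g$) by multiplication by $\alpha(g,h,k)$; the $3$-cocycle identity gives the pentagon, the normalization gives the triangle and unit axioms. This is a semisimple abelian $\Bbbk$-linear category whose scalar objects are exactly the $\delta_g$, and each $\delta_g$ is invertible: suitable $\alpha$-dependent scalars on an evaluation $\delta_{g^{-1}}\pt\delta_g\fd\Bbbi$ and coevaluation $\Bbbi\fd\delta_{g^{-1}}\pt\delta_g$ satisfy the two duality identities, so $\C_{G,\alpha}$ is autonomous, hence a group category. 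Sending $\delta_g\ap X_g$ and using the chosen $\lambda_{g,h}$ as monoidal structure gives a $\Bbbk$-linear functor $\C_{G,\alpha}\fd\C$ which is fully faithful (both categories have $\Bbbk$ as endomorphism algebra of each simple object) and essentially surjective (every object is a sum of scalar objects), hence a monoidal equivalence, the defining property of $\alpha$ being precisely its monoidal coherence. Conversely, running the first construction on $\C_{G,\alpha}$ with the tautological choices $X_g=\delta_g$ recovers $(G,\alpha)$ exactly. Finally, any $\Bbbk$-linear monoidal equivalence of group categories induces an isomorphism of their groups of scalar objects and matches the associated cohomology classes, so the assignment descends to a bijection between equivalence classes of group categories and pairs $(G,[\alpha])$ modulo isomorphism of $G$.

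\textbf{Main obstacle.} The single genuinely computational step is verifying that the pentagon axiom for the associativity constraint of $\C$ translates verbatim into the $3$-cocycle identity for $\alpha$, and dually that the $\alpha$-twisted constraint on $\C_{G,\alpha}$ is coherent and that $\C_{G,\alpha}$ really has duals (finding the correct $\alpha$-dependent evaluation/coevaluation scalars). This, together with the bookkeeping showing $[\alpha]$ is independent of the choices of representatives and of the isomorphisms $\lambda_{g,h}$, is a standard but slightly lengthy diagram chase made manageable by the fact that all the relevant Hom-spaces are one-dimensional; I would control the coboundary ambiguity throughout by fixing the normalization $X_e=\Bbbi$ with $\lambda_{e,g},\lambda_{g,e}$ the unit constraints.
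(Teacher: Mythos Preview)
Your proposal is correct and follows the standard argument; however, note that the paper does not actually prove this theorem at all---it is quoted as a result from \cite{FK}, section~7.5, and the paper only adds the one-sentence description ``up to monoidal equivalence a group category $\C$ is the category of $\lc$-graded finite dimensional vector spaces\ldots\ the associativity constraint is given by an element of the cohomology group $H^3(\lc,\Bbbk^*)$.'' Your sketch is precisely the classical proof one finds in the cited reference (or in any treatment of pointed fusion categories), so there is nothing to compare against here beyond confirming that you have supplied what the paper deliberately omits.
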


More precisely, up to monoidal equivalence a group category $\C$ is the category of $\lc$-graded finite dimensional vector spaces. Morphisms are linear maps which preserve the grading and the associativity constraint is given by an element of the cohomology group $H^3(\lc,\Bbbk^*)$. From now on, the group category defined by the group $G$ and the cohomological class $\alpha\in H^3(G,\Bbbk^*)$ will be denoted $\C_{G,\alpha}$.

Let $\C_{G,\alpha}$ be a group category, the tensor product of two scalar objects is a scalar object, thus two scalar objects of $\C$ are equivalent for the equivalence relation $\sim$ if and only if they are isomorphic. It follows~: $\Gamma_{\C_{G,\alpha}}=\Lambda_{\C_{G,\alpha}}=G$.

\subsubsection*{$U_q(\mathfrak{sl}_2)$, with $q$ generic}
Let $q$ be a complex parameter which is not a root of unity. The scalar objects of the category of the representations of $U_q(\mathfrak{sl}_2)$ are given by the positive integers $\{0, 1,2 , ..., i , ...\}$, where $i$ denotes the unique (up to isomorphism) scalar of dimension $i+1$ (e.g.  \cite{6j}, \cite{Kassel}).  The tensor product of scalar objects is given by the Clebsch-Gordan formula (e.g. \cite{6j}, \cite{Kassel} )~:
\begin{equation}\label{CB}
i\pt j=i+j\oplus i+j-2\oplus... \oplus \cs{i-j}\, .
\end{equation}
Thus for every $k\in \N$, the objects $2k$ and $0$ are subobjects of the tensor product $k\pt k$ and the objects $2k+1$ and $1$ are subobjects of the tensor product $2k+1\pt 2k$. Thus for every $k\in \N$, we have : $2k\sim 0$ and $2k+1\sim 1$, and it follows from the Clebsch-Gordan formula (\ref{CB}) that $0$ and $1$ cannot be subobjects of the same tensor product of scalar objects. It results~: $\Gamma_{U_q(\mathfrak{sl}_2)}=\Zz_2$.

\subsubsection*{$U_q(\mathfrak{sl}_2)$, with $q$ root of unity}
Let $A$ be a $2r$-th primitive root unity such that $A^2=q$ is a $r$-th primitive root of unity. The scalar objects of the category of the finite dimensional representations of $U_q(\mathfrak{sl}_2)$ are given by the integers $\{0,1,...,r-2\}$ (e.g. \cite{6j}, \cite{Kassel},  \cite{Roberts}), where  $i$ denotes the unique (up to isomorphism) scalar representation of dimension $i+1$. In this category, the tensor product of scalar objects is given by the Clebsch-Gordan formula :
$$
i\pt j=\bigoplus_{\substack{k\\(i,j,k)\mbox{ is admissible }}}k\, ,
$$
where an admissible triple $(i,j,k)$ is the data of three positive integers $i,j$ and $k$ satisfying~:
\begin{itemize}
\item $i\leq j+k$, $j\leq i+k$ and $k\leq j+i$\, ,
\item $i+j+k\leq 2(r-2)$\, ,
\item $i+j+k$ is even\, .
\end{itemize}
Similarly to the generic case, we show  that there are only two elements in $\Gamma_{U_q(\mathfrak{sl}_2)}$~:~ the equivalence class of $0$ and the equivalence class of $1$. Thus the graduator $\Gamma_{U_q(\mathfrak{sl}_2)}$ is $\Zz_2$.

\subsection*{Spherical categories}A \emph{spherical category} is a sovereign, finitely semisimple tensor $\Bbbk$-category satisfying~:
\begin{itemize}
\item[$\bullet$] for every object $X$ of $\C$ and for every morphism $f : X \fd X$ : $\tr_r(f)=\tr_l(f)$.
\end{itemize}

A \emph{spherical structure} on $\C$ is a sovereign structure on $\C$ such that $\C$ is a spherical category.

From now on, for every spherical category the left and right trace (resp. dimension) will be denoted by $\tr$ (resp. $\dim$).

The dimension of a spherical category is the scalar : $\dis{\dc=\sum_{X\in \lc}\dim(X)^2}\in \Bbbk$.

\begin{pro}\label{pro:sphgrad}
Let $\C$ be a spherical category ; we denote by $\phi_0$ the spherical structure. The map
\begin{equation*}
\phi \ap \phi_0^{-1}\phi
\end{equation*}
is a bijection between the set of spherical structures of $\C$ and the group $\sph=\{\psi \in \aut \mid \mbox{for every scalar object}$ $\mbox{ $X$, } \psi_X=\pm \id_X\}$\index{$\sph$}.
\end{pro}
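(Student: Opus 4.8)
The plan is to deduce this from Proposition \ref{pro:souv}. That proposition already says $\phi \mapsto \phi_0^{-1}\phi$ is a bijection from the set of \emph{sovereign} structures on $\C$ onto $\aut$; a spherical structure is by definition a sovereign structure for which $\tr_r(f) = \tr_l(f)$ holds for every endomorphism $f$, so the spherical structures form a subset of the sovereign ones and the map of the statement is automatically injective. Hence everything reduces to computing the image: I must show that a sovereign structure $\phi$ is spherical exactly when $\psi := \phi_0^{-1}\phi$ satisfies $\psi_X = \pm\id_X$ for every scalar object $X$.

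The key computation is to see how the two traces rescale when the sovereign structure is changed from $\phi_0$ to $\phi = \phi_0\psi$. Keeping the chosen right duals $(\xd X, e_X, h_X)$ fixed, the left-duality data attached to $\phi$ are $\epsilon_X^{\phi} = e_{\xd X}(\id_{\xd X}\pt\phi_X)$ and $\eta_X^{\phi} = (\phi_X^{-1}\pt\id_{\xd X})h_{\xd X}$, so that $\tr_l^{\phi}(f) = e_{\xd X}\bigl(\id_{\xd X}\pt(\phi_X f)\bigr)h_X$ and $\tr_r^{\phi}(f) = e_X\bigl((f\phi_X^{-1})\pt\id_{\xd X}\bigr)h_{\xd X}$ for $f : X\fd X$, and similarly with $\phi_0$. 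Because trace is additive over direct sums and $\ho{X}{X}\cong\Bbbk$ for a scalar object, it is enough to test the spherical condition on the identities $\id_X$ of scalar objects $X$. For such $X$ write $\psi_X = \lambda_X\id_X$ with $\lambda_X\in\Bbbk^*$, so that $\phi_X = \lambda_X(\phi_0)_X$; substituting and pulling the scalar out of the string diagram gives
$$
\tr_l^{\phi}(\id_X) = \lambda_X\,\tr_l^{\phi_0}(\id_X),\qquad \tr_r^{\phi}(\id_X) = \lambda_X^{-1}\,\tr_r^{\phi_0}(\id_X).
$$
Writing $d_X$ for the common value $\tr_l^{\phi_0}(\id_X) = \tr_r^{\phi_0}(\id_X) = \dim(X)$ (equal since $\phi_0$ is spherical), these identities say $\tr_l^{\phi}(\id_X) = \lambda_X d_X$ and $\tr_r^{\phi}(\id_X) = \lambda_X^{-1}d_X$.

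Consequently $\phi$ is a spherical structure if and only if $(\lambda_X^2 - 1)\,d_X = 0$ for every scalar object $X$. If $\psi\in\sph$ then every $\lambda_X = \pm 1$, so $\lambda_X^2 = 1$ and the condition holds; hence $\phi$ is spherical. Conversely, if $\phi$ is spherical, then using $d_X\neq 0$ for every scalar object $X$ we get $\lambda_X^2 = 1$, i.e. $\psi_X = \pm\id_X$ for all scalar $X$, so $\psi\in\sph$. Combined with the injectivity coming from Proposition \ref{pro:souv}, this shows $\phi\mapsto\phi_0^{-1}\phi$ restricts to a bijection from the spherical structures of $\C$ onto $\sph$. (One sees in passing that $\sph$ is precisely the $2$-torsion subgroup $\{\psi\in\aut : \psi^2 = \id\}$ of $\aut$.)

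The one step that is not purely formal — and so the main obstacle — is the fact used above that $d_X = \dim(X) \neq 0$ for every scalar object $X$ of a spherical category. This is where the standing hypotheses on $\Bbbk$ (characteristic zero, algebraically closed) and on $\C$ (finitely semisimple, invertible global dimension $\dc$) really enter; I would either cite the corresponding statement from the literature on spherical/fusion categories or derive it from the fusion-rule relations $\dim(X)\dim(Y) = \sum_Z N_{X,Y}^{\,Z}\dim(Z)$ together with the invertibility of $\dc = \sum_X \dim(X)^2$. The remaining ingredients — the two trace-rescaling identities and the reduction to identity endomorphisms of scalar objects — are routine string-diagram bookkeeping using the duality (zig-zag) axioms.
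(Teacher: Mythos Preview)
Your proof is correct and follows essentially the same approach as the paper: both compute that the left and right dimensions of a scalar object $X$ rescale by $\lambda_X$ and $\lambda_X^{-1}$ respectively when the sovereign structure changes from $\phi_0$ to $\phi=\phi_0\psi$, and both conclude $\lambda_X^2=1$ by invoking the nonvanishing of $\dim(X)$ for scalar $X$. The paper simply asserts this last point (``the object $X$ is a scalar object thus the right dimension is invertible''), so your explicit flagging of where this fact comes from is a clarification rather than a deviation; the only cosmetic difference is that for the surjectivity direction the paper verifies $\dim_l(X)=\dim_r(X)$ via the identity $\phi_0^{-1}\phi_X=\phi_X^{-1}\phi_{0_X}$ (valid since $\pm\id_X$ is its own inverse), whereas you go through the equivalent scalar equation $(\lambda_X^2-1)d_X=0$.
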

\begin{proof} Let  $\phi_0$ and $\phi$ be spherical structures, we denote by $\tr_r,\tr_l,\dim_r,\dim_l$ (resp. $\tr_l^0,\tr_r^0,\dim_r^0,\dim_l^0$) the right and left traces  and the left and right dimensions  defined by the spherical structure $\phi$ (resp. $\phi_0$). Let $X$ be a scalar object of $\C$, then : $(\phi_0^{-1}\phi)_X=\lambda_X \id_X$ with $\lambda_X\in \Bbbk^{\star}$, it follows :
\begin{align*}
\dim_l(X)&= e_{\xd{X}}(\id_{\xd{X}}\pt \phi_X )h_X\\
&=\tr^0_l(\phi_{0_X}^{-1}\phi_X) \\
&=\lambda_X \dim^0_l(X),
\end{align*}
and
\begin{align*}
\dim_r(X)&= e_{X}(\phi_X^{-1}\pt \id_{\xd{X}})h_{\xd{X}}\\
&=\tr_r^0(\phi_X^{-1}\phi_{0_X}) \\
&= \lambda_X^{-1} \dim_r^0(X),
\end{align*}
then we have : $\lambda_X^{-1}\dim_r^0(X)=\lambda_X \dim^0_r(X)$. The object $X$ is a scalar object thus the right dimension is invertible, it follows : $\lambda_X^2=1$ .  Thus we have built a map from the set of spherical structures to the group $\sph$ :
\begin{align*}
\{\mbox{spherical structures on $\C$} \} & \fd \sph \\
\phi & \ap \phi_0^{-1}\phi \, .
\end{align*}
The map is injective. Let us show that this map is bijective. Let $\Psi \in \sph\subset \aut$. According to the proposition \ref{pro:souv}, we know that there exists a unique sovereign structure  $\phi$ such that $\Psi=\phi_0^{-1}\phi$.  The category $\C$ is a semisimple $\Bbbk$-category, to show that $\phi$ is a spherical structure we must show that the left and right trace coincide for every endomorphism of scalar objects. It is equivalent to show that for every scalar object $X$, the right and left dimension of $X$ are equals. Set $\dim_l$ and $\dim_r$ (resp. $\dim_l^0$ and $\dim_r^0$) the left and right dimension defined by the sovereign structure $\phi$ (resp. the spherical structure $\phi_0$) and $\tr_r^0$, $\tr_l^0$ are the left and right trace defined by  $\phi_0$ :
\begin{align*}
\dim_l(X)&=e_{\xd{X}}(\id_{\xd{X}}\pt \phi_X )h_X \\
&= e_{\xd{X}}(\id_{\xd{X}}\pt {\phi_0}_X{\phi_0}_X^{-1}\phi_X )h_X\\
&=\tr^0_l({\phi_0}_X^{-1}\phi_X)\\
&=\tr_r^0({\phi_0}_X^{-1}\phi_X)\\
&= e_{X}({\phi_0}_X^{-1}\phi_X {\phi_0}_X^{-1}\pt \id_{\xd{X}})h_{\xd{X}}\\
&=e_{X}(\phi^{-1}_X{\phi_0}_X {\phi_0}_X^{-1}\pt \id_{\xd{X}})h_{\xd{X}}\\
&=e_{X}(\phi^{-1}_X\pt \id_{\xd{X}})h_{\xd{X}}\\
&=\dim_r(X).
\end{align*}
The sixth equality come from the fact that for every scalar object $X$, we have the following relation~: ${\phi_0}_X^{-1}\phi_X=\pm \id_X=\phi_X^{-1}{\phi_0}_X$.

\end{proof}

\begin{pro}\label{pro:gradsph}
Let $\C$ be a spherical category. There is a bijection between the set $\sph=\{\Psi \in \aut\mid \textrm{for every scalar object} X : \Psi_X=\pm\id_X\}$ and the set of group morphisms from $\grad$ to the group $\{\pm 1\}$.
\end{pro}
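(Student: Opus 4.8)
The plan is to obtain this as a restriction of the canonical bijection of Proposition \ref{pro:gradaut}. Recall that that proof identifies an element $\Psi \in \aut$ with the map $\epsilon : \lc \fd \Bbbk^*$ determined by $\Psi_X = \epsilon(X)\,\id_X$ on scalar objects, shows that $\epsilon$ is a $\Bbbk^*$-graduation of $\C$, that conversely every $\Bbbk^*$-graduation arises in this way, and finally uses the universal property of $(\grad,\cs{?})$ to turn $\Bbbk^*$-graduations into group morphisms $\grad \fd \Bbbk^*$. Under this dictionary the defining condition of $\sph$, namely $\Psi_X = \pm\,\id_X$ for every scalar object $X$, says exactly that the associated $\Bbbk^*$-graduation $\epsilon$ takes values in the subgroup $\{\pm1\}$ of $\Bbbk^*$.

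So the one observation to make is the following: a $\Bbbk^*$-graduation $p$ factors as $p = f\,\cs{?}$ with $\cs{?} : \lc \fd \grad$ surjective and $f : \grad \fd \Bbbk^*$ a group morphism, and since $\cs{?}$ is onto, the image of $p$ lies in $\{\pm1\}$ if and only if the image of $f$ does, i.e.\ if and only if $f$ is a group morphism $\grad \fd \{\pm1\}$. Combining this with the previous paragraph, the bijection of Proposition \ref{pro:gradaut} restricts to a bijection between $\sph$ and the set of group morphisms $\grad \fd \{\pm1\}$, which is the claim. If one prefers a self-contained argument, it runs as in Proposition \ref{pro:gradaut} with $\Bbbk^*$ replaced by $\{\pm1\}$: to $\Psi \in \sph$ one attaches $p(X)\in\{\pm1\}$ via $\Psi_X = p(X)\,\id_X$, monoidality of $\Psi$ forces $p$ to be multiplicative along scalar subobjects of tensor products, hence a $\{\pm1\}$-graduation, and Proposition \ref{pro:graduation} yields a unique $f_\Psi : \grad \fd \{\pm1\}$ with $f_\Psi\,\cs{?} = p$; conversely, from $f : \grad \fd \{\pm1\}$ one forms the $\{\pm1\}$-graduation $p = f\,\cs{?}$ and sets $\Psi_X := p(X)\,\id_X$, extended to arbitrary objects through their decompositions into scalar summands, and checks as in Proposition \ref{pro:gradaut} that $\Psi$ is a monoidal natural automorphism of $1_{\C}$ lying in $\sph$. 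The two assignments are manifestly mutually inverse.

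I do not expect any genuine obstacle here: the only point requiring care is the verification that $\Psi_X := p(X)\,\id_X$ is monoidal on \emph{all} objects, which is the same semisimplicity argument (decompose into scalar summands, use multiplicativity of $p$ along subobjects of tensor products) already carried out in Proposition \ref{pro:gradaut}. The statement is really a formal consequence of the universal property of the graduator together with the inclusion $\{\pm1\}\subset\Bbbk^*$, and in combination with Proposition \ref{pro:sphgrad} it gives the announced identification of the spherical structures of $\C$ with the group morphisms $\grad \fd \{\pm1\}$.
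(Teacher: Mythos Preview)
Your proof is correct and follows exactly the approach the paper takes: the paper's own proof is the single sentence ``The proof is the same as the proof of the proposition \ref{pro:gradaut}'', and your argument is precisely that, spelled out in detail as the restriction of the bijection of Proposition \ref{pro:gradaut} along the inclusion $\{\pm1\}\subset\Bbbk^*$.
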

\begin{proof} The proof is the same as the proof of the proposition \ref{pro:gradaut} \end{proof}

\section{Turaev-Viro invariant}\label{sec:TVconstruc}

In this section, we recall the construction of the Turaev-Viro invariant. Throughout this section $\C$ will be a spherical category

\medskip

An \emph{orientation} of a $n$-simplex $F$ is a map  $o : Num(F)\fd \{\pm1\}$, where $Num(F)$ is the set of numberings of $F$, invariant under the action of the alternated group $\mathfrak{A}_{N+1}\subset \mathfrak{S}_{N+1}$.

Let $T$ be an oriented simplicial complex, we denote the set of oriented $p$-simplexes by $T^p_o$. A \emph{coloring} of $T$ is a map $c : T^1_o \fd \lc$ satisfying : \begin{itemize}
\item[(i)] $c(x_1x_2)=c(x_2x_1)^{\vee}$, for every oriented 1-simplex $(x_1x_2)$,
\item[(ii)] the unit object $\Bbbi$ is a subobject of $c(x_1x_2)\pt c(x_2x_3)\pt c(x_3x_1)$ for every oriented 2-simplex $(x_1x_2x_3)$.
\end{itemize}
We denote by $Col(T)$ the set of colorings of $T$ .

Let $f$ be an oriented 2-simplex, $c$ be a coloring of $T$ and $\nu=(x_1x_2x_3)$ be a numbering of $f$ compatible with the orientation of $f$. Set :
$$
V_{\C}(f,c)_{\nu}=\ho{\Bbbi}{c(x_1x_2)\pt c(x_2x_3)\pt c(x_3x_1)}\, .
$$
The vector space $V_{\C}(f,c)$ does not depend on the choice of the numbering compatible with the orientation (e.g. \cite{BW}, \cite{GK}, \cite{Tu}). From now on the vector space $V_{\C}(f,c)_{\nu}$, with $\nu=(x_1x_2x_3)$, will be denoted by $V_{\C}(x_1x_2x_3,c)$. If there is no ambiguity on the choice of the coloring $c$, then $V_{\C}(x_1x_2x_3,c)$ will be denoted by $V_{\C}(x_1x_2x_3)$.

Let us recall some properties of the vector space defined above. For every scalar objects $X$, $Y$ and $Z$, we set~:
\begin{align}
\omega_{\C} : Hom_{\C}(\Bbbi,X\pt Y \pt Z)\pt_{\Bbbk} Hom_{\C}(\Bbbi,\xd{Z}\pt \xd{Y} \pt \xd{Z})&\fd \Bbbk^* \label{bilinear}\\
f\pt g & \ap tr(\xd{f}g)\, .\nonumber
\end{align}
For every spherical category $\C$, the bilinear form $\omega_{\C}$ is non degenerate (e.g. \cite{BW}, \cite{GK}, \cite{Tu}). Let $f$ be an oriented 2-simplex, we denote by $\ov{f}$ the 2-simplex $f$ endowed with the opposite orientation. Let $c$ be a coloring of $f$, the bilinear form (\ref{bilinear}) induces~: $V_{\C}(f,c)^{*}\cong V_{\C}(\ov{f},c)$.

In the construction of the Turaev-Viro invariant, we assign to every oriented 3-simplex a vector which lies in the vector space defined by the faces of the 3-simplex. The vector assigned to each 3-simplex is obtained by the 6j-symbols of the category. More precisely, let $T$ be a triangulation of a 3-manifold $M$ and $c$ be a coloring of $T$. We associate to every oriented 3-simplex $(x_1x_2x_3x_4)$ a vector $L^{\pm}((x_1x_2x_3x_4),c)$ defined by the 6j-symbols of the category $\C$~:
\begin{align*}
&L^{+}((x_1x_2x_3x_4),c)\in V_{\C}((x_2x_3x_4),c)\pt_{\Bbbk}V_{\C}((x_1x_4x_3),c)\pt_{\Bbbk}V_{\C}((x_1x_2x_4),c)\pt_{\Bbbk}V_{\C}((x_1x_3x_2),c)\, ,\\
&\mbox{if $(x_1x_2x_3x_4)$ and $M$ have the same orientation.}\\
&L^{-}((x_1x_2x_3x_4),c)\in V_{\C}((x_2x_4x_3),c)\pt_{\Bbbk}V_{\C}((x_1x_3x_4),c)\pt_{\Bbbk}V_{\C}((x_1x_4x_2),c)\pt_{\Bbbk}V_{\C}((x_1x_2x_3),c)\,.\\
&\mbox{if $(x_1x_2x_3x_4)$ and $M$ have opposite orientations.}
\end{align*}

Since the category $\C$ is spherical, the vector $L^{+}((x_1x_2x_3x_4),c)$ (resp. $L^{+}((x_1x_2x_3x_4),c)$) does not depend on the choice of the numbering which respects the orientation of $(x_1x_2x_3x_4)$.

Let us explain the final step of the construction of the Turaev-Viro invariant. Let $T$ be a triangulation of 3-manifold $M$. Every 2-simplex contained in the interior of $M$ is the intersection of a unique pair of 3-simplexes (Fig. \ref{des:intersection}) (e.g. \cite{BW}, \cite{GK}, \cite{Tu}).

\begin{figure}[h!]
$$
\vcenter{\hbox{\includegraphics[height=2cm,width=2.5cm]{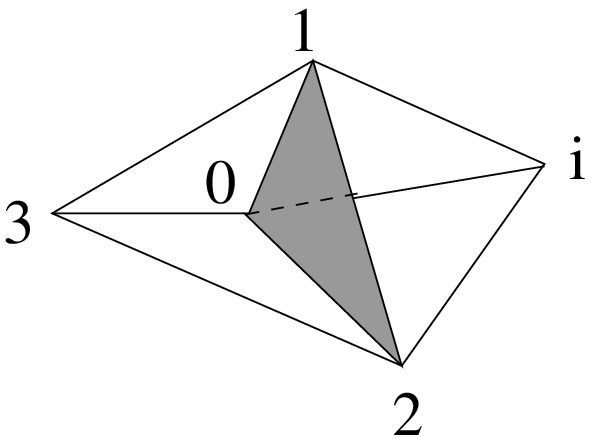}}}
$$
\caption{}
\label{des:intersection}
\end{figure}

We set a numbering of this pair of 3-simplexes. We can assume that the 3-simplex $(0123)$ has the same orientation of the manifold $M$. Thus the 3-simplex $(i012)$ (resp. $(0i12)$) has the same (resp. opposite) orientation of $M$. We obtain the following vectors~:
\begin{align*}
 L^{+}((0123),c)\in V_{\C}((123),c)\pt_{\Bbbk}V_{\C}((032),c)\pt_{\Bbbk}V_{\C}((013),c)\pt_{\Bbbk}V_{\C}((021),c)\,.\\
 L^{+}((i012),c)\in V_{\C}((012),c)\pt_{\Bbbk}V_{\C}((i21),c)\pt_{\Bbbk}V_{\C}((i02),c)\pt_{\Bbbk}V_{\C}((i10),c)\,.
\end{align*}
We remark that the vector $L^{+}((0123),c)$ has a component in the vector space $V_{\C}((021),c)$ and the vector $L^{+}((i012),c)$ has a component in the vector space $V_{\C}((012),c)$. Recall that the vector space $V_{\C}((021),c)$ is the dual vector space of $V_{\C}((012),c)$ for the pairing (\ref{bilinear}). For this pair of vectors, we can apply the pairing (\ref{bilinear}) on the tensor product $L^{+}((0123),c)\pt L^{+}((i012),c)$~:
\begin{align*}
 V_{\C}((012),c)\pt V_{\C}((021),c)&\fd \Bbbk\\
f\pt g&\ap \tr(\xd{f}g)\, .
\end{align*}

If we consider the 3-simplex $(i012)$ with the opposite orientation, then we obtain the following vector~:
\begin{align*}
 L^{-}((0i12),c)\in V_{\C}((i21),c)\pt_{\Bbbk}V_{\C}((012),c)\pt_{\Bbbk}V_{\C}((021),c)\pt_{\Bbbk}V_{\C}((0i1),c)\,.
\end{align*}
We can do the same operation. Thus if two 3-simplexes have a common face, we can do this operation for the vector space assigned to the common face of the pair of 3-simplexes. Since every 2-simplex in the interior of a 3-manifold is the common face of a pair of 3-simplexes, for every coloring $c$ of $T$ this operation for every 3-simplexes of $T$ leads to a scalar if the manifold $M$ is without boundary or to a vector in $\dis{\bigotimes_{f\in T^2_{\partial M} }V_{\C}(f,c)}$ if the manifold $M$ has a boundary $\partial M$. We denote this vector (or scalar)  by $W_c$.

  We introduce some notations. Let $\Sigma$ be an oriented closed surface endowed with a triangulation $T_0$. For every coloring $c_0$  of $T_0$, we set : $\dis{V_{\C}(\Sigma,T_0,c_0)=\bigotimes_{f\in T^2}V_{\C}(f,c_0)}$ and $\dis{V_{\C}(\Sigma,T_0)=\bigoplus_{c\in Col(T_0)}V_{\C}(\Sigma,T_0,c)}$. Let $M$ be 3-manifold with boundary $\Sigma$ and $T$ be a triangulation of $M$ such that its restriction to $\Sigma$ is $T_0$. For every coloring $c_0\in Col(T_0)$, we denote by $Col_{c_0}(T)$ the set of colorings of $T$ such that the restriction to $T_0$ is $c_0$. With this notation, for every coloring $c\in Col_{c_0}(T)$, we have~: $W_c\in V_{\C}(\Sigma,T_0,c_0)$. Furthermore we choose a square root $\dc^{1/2}$ of $\dc$.

For every scalar object $X$ of $\C$, we set $\dim(X)^{1/2}$ a square root of $\dim(X)$. The equalities $\dim(X)^{1/2}= \dim(\xd{X})^{1/2}$ and $\dim(X)= \dim(\xd{X})$ ensure independence of $\dim(c(e))$, $\dim(c(e))^{1/2}$ of the choice of the orientation of $e$, for every coloring $c$.

\begin{theorem}[Turaev-Viro invariant \cite{BW}, \cite{GK}, \cite{Tu}, \cite{TV} ]
Let $\C$ be a spherical category with an invertible dimension, $M$ be a compact oriented 3-manifold and $\partial M$ be the boundary of $M$ endowed with a triangulation $T_0$. For every coloring $c_0\in Col(T_0)$, we set :
\begin{equation}
\label{invTV}
\scriptsize{TV_{\C}(M,c_0)=\dc^{-n_0(T)+n_0(T_0)/2}\sum_{c \in Col_{c_0}(T)}\prod_{e\in T^1_0}\dim(c_0(e))^{1/2}\prod_{e\in
T^1\backslash T^1_0}\dim(c(e))W_{c}\in V(\partial M,c_0T_0)\, ,}
\end{equation}
where $n_0(T)$ (resp. $n_0(T_0)$) is the number of $0$-simplexes of $T$ (resp. $T_0$) and $T^1\backslash T^1_0$ is the set of 1-simplexes of $M\backslash\partial M$.
For every coloring $c_0\in Col(T_0)$, the vector $TV_{\C}(M,c_0)$ is independent on the choice of the triangulation of $M$ which extends $T_0$. The Turaev-Viro invariant is the vector~:
$$
TV_{\C}(M)=\sum_{c_0\in Col(T_0)}TV_{\C}(M,c_0)\in V_{\C}(\partial M,T_0)\,.
$$
\end{theorem}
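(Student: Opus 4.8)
The plan is to establish the asserted independence of $TV_{\C}(M,c_0)$ on the choice of triangulation of $M$ extending $T_0$ by a standard move-reduction; once this is done, the formula $TV_{\C}(M)=\sum_{c_0}TV_{\C}(M,c_0)$ is purely formal.

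\emph{Step 1: reduction to local moves.} Any two triangulations of the compact PL $3$-manifold $M$ which restrict to the \emph{same} triangulation $T_0$ of $\partial M$ are related by a finite sequence of bistellar (Pachner) moves supported in the interior of $M$, namely the $2\leftrightarrow 3$ move (two tetrahedra sharing a $2$-simplex $\leftrightarrow$ three tetrahedra sharing a new interior edge) and the $1\leftrightarrow 4$ move (one tetrahedron $\leftrightarrow$ four tetrahedra around a new interior vertex), all modified simplexes being disjoint from $\partial M$; equivalently one passes to the dual special spine and uses the Matveev--Piergallini moves. It therefore suffices to show that the right-hand side of (\ref{invTV}) is unchanged under each of these two moves.

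\emph{Step 2: invariance under the moves.} The $2\leftrightarrow 3$ move leaves $n_0$ unchanged, so the normalization $\dc^{-n_0(T)}$ is irrelevant here. Expressing $W_c$ as the contraction of the $6j$-vectors $L^{\pm}$ over the interior $2$-simplexes, and contracting the faces internal to the two (resp.\ three) tetrahedra using the non-degenerate pairing $\omega_{\C}$ of (\ref{bilinear}), invariance becomes exactly the Biedenharn--Elliott (pentagon) identity for the $6j$-symbols of $\C$, after summing over the color of the new edge; the weight $\dim(c(e))$ of that edge is precisely the factor occurring in the pentagon relation. The $1\leftrightarrow 4$ move adds one interior vertex, so the normalization acquires an extra factor $\dc^{-1}$; summing over the colors of the three new interior edges and contracting the internal faces via $\omega_{\C}$, the orthogonality/completeness relation for $6j$-symbols — equivalently, the resolution of the identity in the spaces $V_{\C}(f,c)$, which follows from semisimplicity together with non-degeneracy of $\omega_{\C}$ — collapses the four-tetrahedra configuration to the single tetrahedron and produces a factor $\sum_{X\in \lc}\dim(X)^{2}=\dc$ from the removed ``bubble'', cancelling the $\dc^{-1}$. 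Throughout one uses that $L^{\pm}$ and $W_c$ are independent of the numbering compatible with the orientation (sphericity) and that $\dim(c(e))$ and $\dim(c(e))^{1/2}$ do not depend on the orientation of $e$.

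\emph{Step 3: bookkeeping and conclusion; main obstacle.} Since every interior $2$-simplex is the common face of exactly one pair of $3$-simplexes, the faces created or destroyed by a move occur in $W_c$ exactly as needed for the global contraction to be defined, so after each move one again obtains a well-defined vector in $V_{\C}(\partial M,T_0,c_0)$; this also verifies the membership $TV_{\C}(M,c_0)\in V_{\C}(\partial M,T_0,c_0)$ asserted in (\ref{invTV}). Invariance under the two moves together with Step 1 gives independence of $TV_{\C}(M,c_0)$ on the interior triangulation, and summing over $c_0\in Col(T_0)$ yields $TV_{\C}(M)\in V_{\C}(\partial M,T_0)$. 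The two genuinely delicate points are (a) the \emph{relative} Pachner theorem of Step 1 — that triangulations with a fixed boundary triangulation are connected through interior moves alone — and (b) the precise matching of normalization powers of $\dc$ under the $1\leftrightarrow 4$ move, which must be produced by the $6j$-orthogonality relation; it is exactly at (b), and in establishing non-degeneracy of $\omega_{\C}$, that sphericity and the invertibility of $\dc$ are essential.
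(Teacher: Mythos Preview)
The paper does not give its own proof of this theorem: it is stated as a recalled result with citations to \cite{BW}, \cite{GK}, \cite{Tu}, \cite{TV}, and the relative Pachner theorem is likewise quoted (from \cite{Tu}) just before Lemma~\ref{lem:invjau}. So there is no in-paper argument to compare against; your sketch is precisely the standard route taken in those references --- reduce to interior bistellar moves fixing $T_0$, then verify invariance under the $2\!\leftrightarrow\!3$ move via the Biedenharn--Elliott identity and under the $1\!\leftrightarrow\!4$ move via orthogonality together with the $\dc$ normalization.

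One small slip: in the $1\!\leftrightarrow\!4$ move you introduce \emph{four} new interior edges (the new interior vertex is joined to all four vertices of the tetrahedron), not three, and six new interior $2$-simplexes. This does not affect the logic of the argument, but the bookkeeping in your Step~2 should be corrected accordingly.
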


From now on, for every coloring $c\in Col_{c_0}(T)$ we denote by $w_c$ the scalar $\dis{\prod_{e\in T^1_0}\dim(c_0(e))^{1/2}\prod_{e\in T^1\backslash T^1_0}\dim(c(e))}$.

There exists other normalization of the Turaev-Viro invariant. For every cobordism $M$ such that $\partial M=\overline{\Sigma_+}\coprod \Sigma_-$, with $T_+$ (resp. $T_-$) a triangulation of $\Sigma_+$ (resp. $\Sigma_-$), we can replace the scalar $\dc^{(n_0(T_+)+n_0(T_-))/2}$ by $\dc^{n_0(T_+)}$ or $\dc^{n_0(T_-)}$ .  Turaev and Viro (\cite{Tu}, \cite{TV}) use the scalar $\dc^{(n_0(T_+)+n_0(T_-))/2}$ to normalize the Turaev-Viro invariant. Notice that with this normalization, we don't have to choose a square root of $\dc$. The Turaev-Viro TQFTs obtained from these changes of normalization are the same (up to isomorphism).

\section{Homotopical Turaev-Viro invariant}\label{sec:HTVconstruc}

\subsection{Fundamental groupo\"{i}d}

We recall the definition of the fundamental groupo\"{i}d and we set some notations.

Let $T$ be a simplicial complex. A \emph{path} of $T$ is a finite sequence $v_0v_1...v_n$ of 0-simplexes of $T$ in which each consecutive pair $v_iv_{i+1}$ spans a 1-simplex of $T$. In particular a 0-simplex is a path. Let $v$ be a 0-simplex of $T$, a \emph{loop based at $v$} is a path $v_0v_1...v_n$ such that $v_0=v_n=v$. We consider two paths to be \emph{equivalent} if we can obtain one from the other by a finite number of operations of the following type : If three 0-simplexes $u,v,w$ spans a 2-simplex, the path $...uvw...$ can be replaced by the path $...uw...$ , and vice-versa (Fig. \ref{mvt:simp1}). If $v$ and $w$ are 0-simplexes of $T$ then the path $...vwv...$ can be replaced by the path $...v...$ and vice-versa (Fig. \ref{mvt:simp2}).

\begin{figure}[h!]
$$
\vcenter{\hbox{\includegraphics[height=3cm]{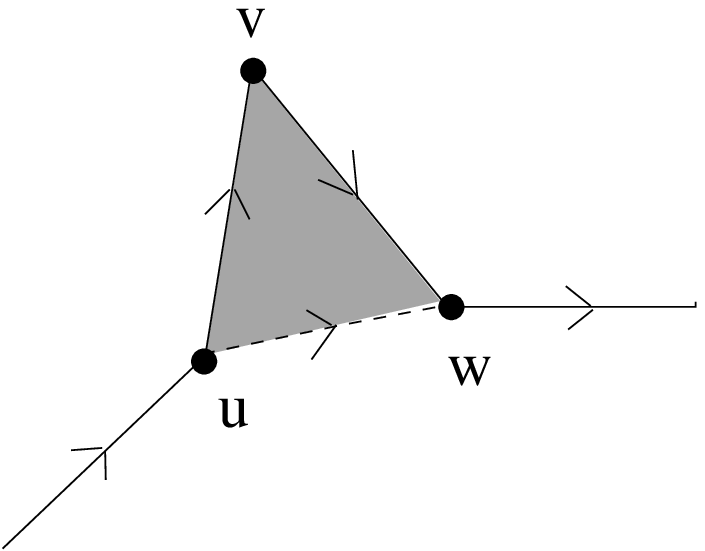}}}\hspace{1cm}\longleftrightarrow\hspace{1cm}\vcenter{\hbox{\includegraphics[height=3cm]{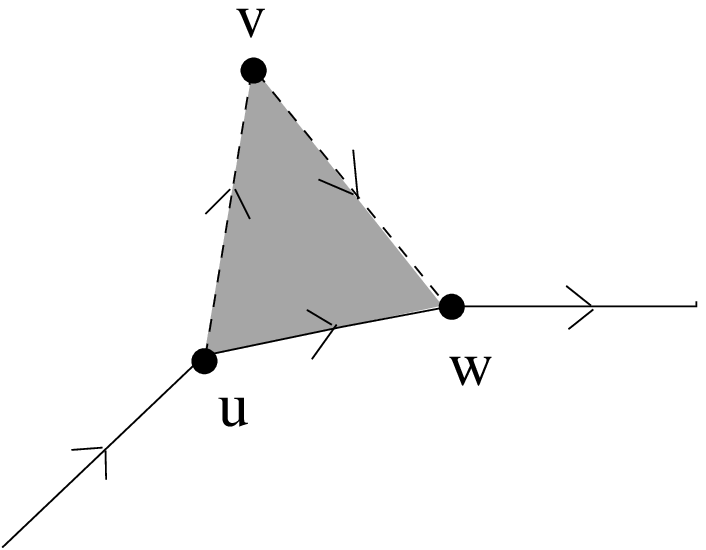}}}
$$
\caption{}
\label{mvt:simp1}
\end{figure}

\begin{figure}[h!]
$$
\vcenter{\hbox{\includegraphics[height=3cm]{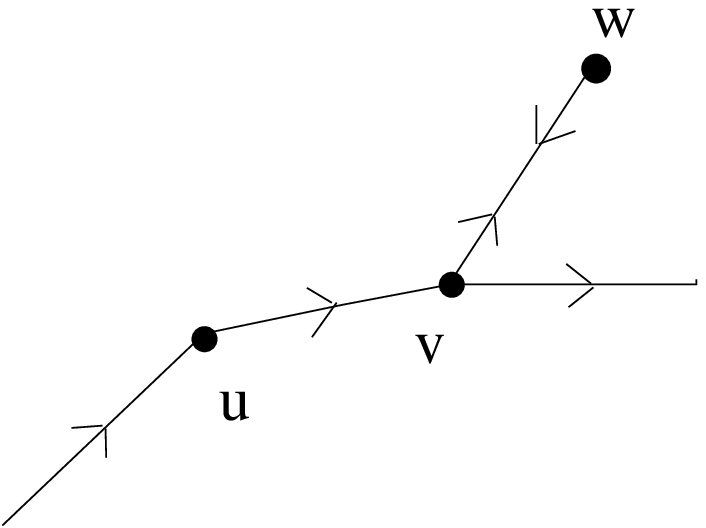}}}\hspace{1cm}\longleftrightarrow\hspace{1cm}\vcenter{\hbox{\includegraphics[height=3cm]{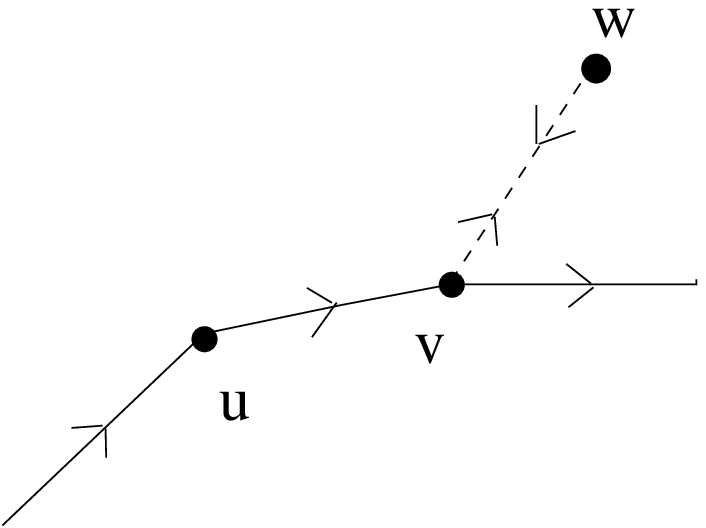}}}
$$
\caption{}
\label{mvt:simp2}
\end{figure}
These moves define an equivalence relation over the set of paths, \emph{two paths are equivalents} if and only if we can go between any of these two paths by a finite sequence of moves (Fig \ref{mvt:simp1}) and/or (Fig. \ref{mvt:simp2}). We will abusively use the notation $v_1...v_n$ for the equivalence class of the path $v_1...v_n$.

Let $T$ be a simplicial complex, the \emph{fundamental groupoid} of $T$ is the category with objects the 0-simplexes of $T$ and morphisms the equivalence classes of paths. The composition is given by the concatenation of paths. For technical reason, we will consider the opposite category (morphisms are reversed), more precisely the composition of oriented 1-simplexes :
$$
...\fd x\xr{(xy)} y \xr{(yz)} z \fd ...
$$
will be written : $(yz)\circ(xy)=(xy)(yz)$. We denote by $\pi_1(T)$ the fundamental groupoid of $T$. Let $v$ be a 0-simplex of $T$, we denote by $\pi_1(T,v)$ the category with one object $v$ and whose morphisms are equivalence classes of $v$-loops. There is an equivalence of categories between the categories $\pi_1(T)$ (resp. $\pi_1(T,v)$) and  the fundamental group of the topological space $\cs{T}$ obtained from $T$ (resp. the fundamental group of the pointed topological space $(\cs{T},v)$).

A \emph{connected simplicial complex} is a simplicial complex $T$ such that for every 0-simplexes $u$ and $v$ there exists a path between $u$ and $v$.

\begin{lem}
Let $T$ be a simplicial complex and $v$ be a 0-simplex of $T$. There is an equivalence of categories between $\pi_1(T)$ and $\pi_1(T,v)$.
\end{lem}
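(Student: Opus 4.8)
The plan is to exhibit explicitly an inclusion functor and a "path-back" functor witnessing the equivalence, and to check that their composites are naturally isomorphic to the respective identity functors. Since $T$ need not be connected, I first restrict attention to the connected component $T_v$ of $T$ containing $v$: a loop based at $v$ visits only $0$-simplexes of $T_v$, and any path between two $0$-simplexes of $T_v$ stays in $T_v$, so $\pi_1(T,v)$ depends only on $T_v$ and $\pi_1(T_v)$ is a full subcategory of $\pi_1(T)$ which is moreover equivalent to $\pi_1(T)$ only when $T$ is connected. To keep the statement as phrased, I would either assume $T$ connected (as the paragraph introducing ``connected simplicial complex'' suggests is the intended setting) or, more honestly, remark that $\pi_1(T,v)\simeq\pi_1(T_v)$ and it is this component that matters; I will write the argument for $T$ connected.

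First I would define the functor $\iota:\pi_1(T,v)\to\pi_1(T)$ as the obvious inclusion sending the unique object $v$ to $v$ and a $v$-loop to its equivalence class in $\pi_1(T)$; this is well defined since the equivalence relation on $v$-loops is the restriction of the one on paths. Next, for each $0$-simplex $u$ of $T$, using connectedness, I choose once and for all a path $\gamma_u$ from $v$ to $u$, taking $\gamma_v$ to be the constant path $v$. I then define $R:\pi_1(T)\to\pi_1(T,v)$ on objects by sending every $u$ to $v$, and on a morphism $\alpha:u\to u'$ (an equivalence class of paths from $u$ to $u'$) by $R(\alpha)=\gamma_{u'}\,\alpha\,\overline{\gamma_u}$, the $v$-loop obtained by concatenating $\gamma_u$ (reversed), then $\alpha$, then $\gamma_{u'}$; here $\overline{\gamma_u}$ denotes the reverse path, and the second move (Fig.~\ref{mvt:simp2}) guarantees that $\overline{\gamma_u}\gamma_u$ and $\gamma_{u'}\overline{\gamma_{u'}}$ are trivial in $\pi_1$, so $R$ respects composition and identities; it is clearly well defined on equivalence classes.

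Then I would check the two natural isomorphisms. The composite $R\circ\iota$ sends $v$ to $v$ and a $v$-loop $\ell$ to $\gamma_v\,\ell\,\overline{\gamma_v}=v\,\ell\,v$, which equals $\ell$ in $\pi_1(T,v)$ by the moves; hence $R\circ\iota=\mathrm{id}_{\pi_1(T,v)}$ on the nose. For the other composite $\iota\circ R:\pi_1(T)\to\pi_1(T)$, which sends every $0$-simplex $u$ to $v$ and $\alpha:u\to u'$ to $\gamma_{u'}\alpha\overline{\gamma_u}$, I would produce a natural isomorphism $\theta:\iota\circ R\Rightarrow \mathrm{id}_{\pi_1(T)}$ by setting $\theta_u=\overline{\gamma_u}:v\to u$ (an iso in the groupoid $\pi_1(T)$). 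Naturality is the identity $\alpha\circ\theta_u=\theta_{u'}\circ(\iota R\alpha)$, i.e. $\alpha\,\overline{\gamma_u}=\overline{\gamma_{u'}}\,(\gamma_{u'}\alpha\overline{\gamma_u})$ in our reversed-composition convention, which holds because $\overline{\gamma_{u'}}\gamma_{u'}$ is trivial. This yields the equivalence of categories.

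The only genuine subtlety — the ``main obstacle'' — is bookkeeping with the composition convention: the paper writes composition of $1$-simplexes as $(yz)\circ(xy)=(xy)(yz)$, the \emph{opposite} of the usual convention, so one must be careful about the order of concatenation and about which way $\overline{\gamma_u}$ goes, and verify that the triangle and backtracking moves (Figs.~\ref{mvt:simp1} and~\ref{mvt:simp2}) indeed suffice to cancel $\overline{\gamma_u}\gamma_u$. Everything else is a routine check that the described assignments respect the equivalence relation on paths. Note that the functor $R$ depends on the choice of the paths $\gamma_u$, but different choices give naturally isomorphic functors, which is all an equivalence of categories requires.
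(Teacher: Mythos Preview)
Your argument is correct in strategy and takes a genuinely different route from the paper. The paper dispatches the lemma in two lines via the abstract characterization of equivalences: the inclusion $\iota:\pi_1(T,v)\hookrightarrow\pi_1(T)$ is faithful (and, tacitly, full, since any morphism $v\to v$ in $\pi_1(T)$ is by definition a $v$-loop), and---assuming $T$ connected, which the paper also invokes---essentially surjective, because any $0$-simplex $u$ is isomorphic to $v$ via a connecting path. You instead build an explicit quasi-inverse $R$ using chosen paths $\gamma_u$ and verify the two natural isomorphisms by hand. The paper's route is quicker; yours has the virtue of actually producing the inverse functor, which is what one wants when transporting a functor $\pi_1(T)\to G$ (as in Proposition~\ref{pro:jau}) down to a homomorphism $\pi_1(T,v)\to G$. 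One cosmetic point: in the paper's concatenation convention your formula should read $R(\alpha)=\gamma_u\,\alpha\,\overline{\gamma_{u'}}$ (go $v\to u\to u'\to v$) and the natural isomorphism should be $\theta_u=\gamma_u:v\to u$ rather than $\overline{\gamma_u}$; your subscripts and bars are swapped, though your verbal description and the naturality check make the intended maps clear, so this is exactly the bookkeeping hazard you yourself flagged rather than a real gap.
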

\pf
The inclusion functor :
\begin{align*}
\pi_1(T,v) & \fd \pi_1(T) \\
v & \ap v\\
vv_1....v_nv & \ap vv_1...v_nv\, ,
\end{align*}
is a faithful functor. Since $T$ is connected, this functor is essentially surjective.

\qed

\subsection{Colorings}
We give a topological interpretation of the set of colorings. Throughout this section $\C$ will be a finitely semisimple tensor $\Bbbk$-category and $G$ will be a group.

\subsubsection{Description of the set of colorings in the case of manifolds without boundary}

Let $T$ be a simplicial complex. A \emph{$G$-coloring $c$} of $T$ is a map :

\begin{align*}
c:T^1_o & \fd G\\
e & \ap c(e),
\end{align*}
satisfying :
\begin{itemize}
\item[(i)]for every  oriented 1-simplex $(x_1x_2)$ of $T$~:~$c(x_1x_2)=c(x_2x_1)^{-1}$\, ,
\item[(ii)]for every oriented 2-simplex  $(x_1x_2x_3)$ of $T$~:~$c(x_1x_2)c(x_2x_3)c(x_3x_1)=1$\, .\\
\end{itemize}

We denote by $Col_G(T)$ the set of $G$-colorings.

\emph{A gauge of $T$} is a map $\delta : T^0\fd G$. The \emph{gauge group of $T$} is the group of gauges of $T$ and is denoted by $\m{G}_T$. The gauge group $\m{G}_T$ acts on $Col_G(T)$ in the following way :
\begin{align}
\m{G}_T \times Col_G(T) & \fd Col_G(T) \label{action:jauge}\\
(\delta,c) & \ap c^{\delta},\nonumber
\end{align}
where $c^{\delta}$ is the coloring : $c^{\delta}(xy)=\delta(x)c(xy)\delta(y)^{-1}$, for every oriented 1-simplex $(xy)$. We denote by $\jag{T}$ the quotient set of $Col_G(T)$ by the action of the gauge group $\m{G}_T$.

Let $c$ be a $G$-coloring of $T$, we denote by $[c]$ the class of $c$ in $\jag{T}$. For every group $G$, we denote by $G$ the groupoid with one object and whose the set of morphisms is $G$. For easy reading, we will consider the opposite groupoid, thus the composition of map $g$ with $f$ will be written $fg$.

\begin{pro}\label{pro:jau}
Let $T$ be a simplicial complex, $\C$ be a semisimple tensor $\Bbbk$-category and $G$ be a group. The map :
\begin{align*}
Col_G(T) & \fd Fun(\pi_1(T),G)\\
c& \ap F_c \, ,
\end{align*}
where $F_c$ is the functor which sends every 0-simplex of $T$ to the unique object of $G$ and sends every oriented 1-simplex $(xy)$ to $c(xy)$, induces the following isomorphism :
\begin{equation}
\jag{T}\cong Fun(\pi_1(T),G)/(iso)\cong [\cs{T},K(G,1)]\, ,\label{pro:suitebij}
\end{equation}
where $[\cs{T},K(G,1)]$ is the set of homotopy classes of continuous maps from the topological space $\cs{T}$ to the Eilenberg-Mac Lane space $K(G,1)$.
\end{pro}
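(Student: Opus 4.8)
The plan is to prove the two isomorphisms in \eqref{pro:suitebij} separately, working from left to right. First I would establish that the map $c \mapsto F_c$ is well defined, i.e. that $F_c$ descends to a functor on $\pi_1(T)$: this amounts to checking that $F_c$ respects the two elementary moves on paths. The move replacing $\dots uvw\dots$ by $\dots uw\dots$ is handled by condition (ii) in the definition of a $G$-coloring, since $c(uv)c(vw) = c(uw)$ whenever $uvw$ spans a $2$-simplex (rewriting $c(uv)c(vw)c(wu)=1$ and using condition (i)); the move replacing $\dots vwv\dots$ by $\dots v\dots$ is handled by condition (i), since $c(vw)c(wv) = c(vw)c(vw)^{-1} = 1$. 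Conversely, any functor $F : \pi_1(T) \to G$ restricts on oriented $1$-simplexes to a map $T^1_o \to G$ that automatically satisfies (i) and (ii) because $F$ is a functor and the relevant composites of paths are equivalent; so $c \mapsto F_c$ is a bijection $Col_G(T) \cong Fun(\pi_1(T),G)$.

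Next I would identify the orbits of the gauge group action with isomorphism classes of functors. A natural transformation $\theta : F_c \Rightarrow F_{c'}$ assigns to each $0$-simplex $x$ an element $\theta_x \in G$ (the component at $x$, viewed as a morphism of the one-object groupoid $G$) such that for every oriented $1$-simplex $(xy)$ the naturality square $\theta_y \, c(xy) = c'(xy) \, \theta_x$ holds in $G$ — that is, $c'(xy) = \theta_y \, c(xy) \, \theta_x^{-1}$. Comparing with the definition of $c^\delta$, this says precisely that $c'$ and $c$ are gauge-equivalent, with the gauge $\delta$ given by $\delta(x) = \theta_x$ (up to the inversion convention coming from the passage to opposite groupoids, which I would check is consistent with the way the paper writes composition $fg$). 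Moreover every natural transformation between $F_c$ and $F_{c'}$ is automatically invertible since $G$ is a groupoid, so isomorphism of functors is exactly gauge equivalence. Hence $\jag{T} \cong Fun(\pi_1(T),G)/(\mathrm{iso})$.

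For the final isomorphism $Fun(\pi_1(T),G)/(\mathrm{iso}) \cong [\cs{T}, K(G,1)]$, I would invoke the standard fact that for a CW-complex (here $\cs{T}$) the set of homotopy classes of maps into an Eilenberg--Mac Lane space $K(G,1)$ is in natural bijection with conjugacy classes of homomorphisms $\pi_1(\cs{T}) \to G$ when $\cs{T}$ is connected, and more generally with isomorphism classes of functors from the fundamental groupoid $\pi_1(\cs{T})$ to $G$ without a connectedness hypothesis; combined with the equivalence of categories between $\pi_1(T)$ and $\pi_1(\cs{T})$ mentioned just before the statement, this gives the claim. If $T$ is connected one can also first reduce $\pi_1(T)$ to the one-object groupoid $\pi_1(T,v)$ using the lemma, turning functors into homomorphisms $\pi_1(\cs{T},v) \to G$ and isomorphism classes into conjugacy classes, then quote $[\,(\cs{T},v),K(G,1)\,] \cong \mathrm{Hom}(\pi_1(\cs{T},v),G)$ and pass to free homotopy by quotienting by conjugation.

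The main obstacle I anticipate is purely bookkeeping rather than conceptual: keeping the variance and inversion conventions straight. The paper works with the \emph{opposite} fundamental groupoid and the \emph{opposite} one-object groupoid $G$, writes composition as $fg$ rather than $g\circ f$, and defines the gauge action with $\delta(x)$ on the left and $\delta(y)^{-1}$ on the right; one must verify that, with these conventions, the naturality condition for a morphism $F_c \to F_{c'}$ matches the formula $c^\delta(xy) = \delta(x)c(xy)\delta(y)^{-1}$ exactly, with no stray inversion. Once the conventions are pinned down, each of the three bijections is routine, and naturality in $T$ (needed implicitly for later use) follows because every construction is defined simplex-by-simplex.
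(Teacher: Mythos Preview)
Your proposal is correct and follows essentially the same approach as the paper: verify that $F_c$ is well defined on path classes using conditions (i) and (ii), check that $c\mapsto F_c$ is a bijection, and then match gauge orbits with natural-isomorphism classes. If anything, you are slightly more complete than the paper, which only checks explicitly that gauge-equivalent colorings give isomorphic functors (the converse, needed for injectivity on the quotient, is left implicit), and which does not justify the final identification with $[\cs{T},K(G,1)]$ at all. One small caution on the bookkeeping you already flagged: your naturality equation $\theta_y\,c(xy)=c'(xy)\,\theta_x$ yields $c'(xy)=\theta_y\,c(xy)\,\theta_x^{-1}$, whereas the gauge action is $c^{\delta}(xy)=\delta(x)\,c(xy)\,\delta(y)^{-1}$; with the paper's opposite-groupoid convention the correct match is $\eta_x=\delta(x)$ and the square reads $F'(xy)\eta_y=\eta_x F(xy)$, so make sure your final write-up uses that form.
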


\pf
For every coloring $c\in Col_G(T)$, we define the following functor $F_c : \pi_1(T) \fd G$, with $F_c(x)=\star$ for every 0-simplex $x$ and $F_c(xy)=c(xy)$ for every oriented 1-simplex $(xy)$. The composition of morphisms is given by concatenation, it follows that : $F_c(x_1x_2...x_{n-1}x_n)=F_c(x_1x_2)...F_c(x_{n-1}x_n)$. For every oriented 1-simplex $(xy)$, we have $c(xy)=c(yx)^{-1}$ and for every oriented 2-simplex $(xyz)$, we have : $c(xy)c(yz)=c(xz)$. It follows that $F_c$ is well defined on the equivalent classes of paths and thus it is a functor from $\pi_1(T)$ to $G$. We have build the following map :
\begin{align}
\Psi : Col_G(T) & \fd Fun(\pi_1(T),G) \label{bij:homotopie}\\
c & \ap F_c, \nonumber
\end{align}
Let us show that this map is bijective. If two colorings $c$ and $c'$ define the same functor then for every oriented 1-simplex $(xy)$ of $T$, we have~: $c(xy)=F_c(xy)=F_{c'}(xy)=c'(xy)$. Let $F$ be a functor from $\pi_1(T)$ to $G$. For every oriented 1-simplex $(xy)$ of $T$, we set : $c(xy)=F(xy)$. For every oriented 2-simplex $(xyz)$ of $T$, we have : $F(xy)F(yz)=F(xz)$ and for every oriented 1-simplex $(xy)$ we have $F(xy)F(yx)=F(xx)=\id_{\star}$, thus $c\in Col_G(T)$.

Let us show that the bijection (\ref{bij:homotopie}) induces a bijection from $Col_{G}(T)/\m{G}_T$ to $Fun(\pi_1(T),G)/(iso)$. Let $c$ and $c'$ be two $G$-colorings of $T$ such that $c'=c^{\delta}$ with $\delta\in \m{G}_T$, we set $F=\psi(c)$ and $F'=\Psi(c')$. For every object $x$ of $\pi_1(T)$, we set $\eta_x=\delta(x) : F'(x) \fd F(x)$. Let us show that $\eta$ is a natural isomorphism between $F$ and $F'$. It suffices to check out for every oriented 1-simplex. For every oriented 1-simplex $(xy)$ of $T$, we have~:
\begin{align*}
F'(xy)\eta_y&=c^{\delta}(xy)\delta(y)\\
&=\delta(x)c(xy)\\
&=\eta_xF(xy)\, .
\end{align*}
 Thus $\Psi$ induces a bijection from $Col_G(T)/\m{G}_T$ to $Fun(\pi_1(T),G)/(iso)$.

\qed

\subsubsection{Description of the set of colorings in  the case of manifolds with boundary}

Let $G$ be a group, $M$ be a 3-manifold, $\Sigma$ be the boundary of $M$ and $T_0$ be a triangulation of $\Sigma$. Set $Col_{G,c_0}(T)$ the set of $G$-colorings of $T$ such that the restriction to $T_0$ is $c_0$. For every functor $F_0 : \pi_1(T_0)\fd G$, $Fun(\pi_1(T),G)_{F_0}$ is the set of functors $F$ from $\pi_1(T)$ to the groupoid $G$ such that the diagram :
$$
\xymatrix{
\pi_1(T) \ar[r]^{F}&  G \\
\pi_1(T_0) \ar@{^{(}->}[u]^{i} \ar[ur]_{F_0}\, ,&&
}
$$
commutes. In the above diagram, $i$ is the inclusion functor. We denote by $Fun(\pi_1(T),G)_{F_0}/(iso)$ the set of isomorphisms classes of functor in $Fun(\pi_1(T),G)_{F_0}$  such that the restriction of the natural isomorphisms to $\pi_1(T_0)$  is $\id_{F_0}$.

\begin{pro}\label{pro:colbound}
Let $\C$ be a semisimple tensor $\Bbbk$-category, $T$ be a simplicial complex and $T_0$ a subcomplex of $T$. For every coloring $c_0\in Col(T_0)$, the map:
\begin{align}
Col_{G,c_0}(T)&\fd Fun(\pi_1(T),G)_{F_{c_0}} \label{pro:tec}\\
c & \ap F_c,\nonumber
\end{align}
where the functor $F_c$ sends every 0-simplex of $T$ to the unique object of the groupoid $G$ and every oriented 1-simplex $(xy)$ to $c(xy)$, induces the following isomorphism :
\begin{equation}
Col_{G,c_0}(T)/\m{G}_T\simeq Fun(\pi_1(T),G)_{F_{c_0}}/(iso)\, .
\end{equation}
\end{pro}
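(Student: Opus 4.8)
The plan is to obtain Proposition \ref{pro:colbound} as the relative version of Proposition \ref{pro:jau}: essentially the whole argument of the closed case carries over once the boundary data is kept along for the ride, and the only genuinely new point is the bookkeeping of the gauge group against the prescribed behaviour over $\pi_1(T_0)$.

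First I would check that $c\mapsto F_c$ takes values in $Fun(\pi_1(T),G)_{F_{c_0}}$ and is a bijection onto it. That $F_c$ is a well-defined functor $\pi_1(T)\fd G$ is exactly the verification made in Proposition \ref{pro:jau}: condition (i) for a $G$-coloring gives $F_c(xy)F_c(yx)=\id$ and condition (ii) gives $F_c(xy)F_c(yz)=F_c(xz)$, which is all that is needed for $F_c$ to pass to equivalence classes of paths. Since $c$ restricts to $c_0$ on $T_0$, the functors $F_c\circ i$ and $F_{c_0}$ agree on every oriented $1$-simplex of $T_0$, hence on all of $\pi_1(T_0)$, so the triangle commutes and $F_c\in Fun(\pi_1(T),G)_{F_{c_0}}$. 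Injectivity is immediate, $c$ being recovered by $c(xy)=F_c(xy)$; and for surjectivity, given $F\in Fun(\pi_1(T),G)_{F_{c_0}}$ one sets $c(xy)=F(xy)$, notes that $F(xy)F(yx)=F(xx)=\id$ and $F(xy)F(yz)=F(xz)$ give $c\in Col_G(T)$, and reads off $c\in Col_{G,c_0}(T)$ from the commutativity $F\circ i=F_{c_0}$.

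Then I would descend to the quotients. Here the gauge transformations that actually stabilize $Col_{G,c_0}(T)$ are the elements $\delta$ of $\m{G}_T$ which equal $1$ on the $0$-simplexes of $T_0$, and it is by these that one quotients. If $c'=c^{\delta}$ for such a $\delta$, then setting $\eta_x=\delta(x)$ and repeating the computation of Proposition \ref{pro:jau}, namely $F_{c'}(xy)\eta_y=c^{\delta}(xy)\delta(y)=\delta(x)c(xy)=\eta_xF_c(xy)$, one gets a natural isomorphism $\eta : F_{c'}\Rightarrow F_c$, and $\eta$ restricts to $\id_{F_{c_0}}$ on $\pi_1(T_0)$ precisely because $\delta$ is trivial on $T^0_0$. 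Conversely a natural isomorphism $\eta : F_{c'}\Rightarrow F_c$ with $\eta|_{\pi_1(T_0)}=\id_{F_{c_0}}$ defines a gauge $\delta(x)=\eta_x$, trivial on $T^0_0$, with $c^{\delta}=c'$. Hence the map induced on quotients is a bijection, which is the assertion.

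The main obstacle is exactly this last step: one must align the two ``boundary identity'' conditions — on the functor side, natural isomorphisms restricting to $\id_{F_{c_0}}$ over $\pi_1(T_0)$; on the coloring side, gauges vanishing on the $0$-simplexes of $T_0$ — and verify that they correspond term by term under $\eta_x\leftrightarrow\delta(x)$. If one instead allowed all of $\m{G}_T$, including gauges restricting to a nontrivial natural automorphism of $F_{c_0}$ on $T_0$, the two quotients would no longer match, so identifying the correct subgroup is the crux. Everything outside of this is a mechanical transcription of the proof of Proposition \ref{pro:jau}.
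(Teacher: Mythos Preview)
Your proposal is correct and follows exactly the approach the paper intends: the paper's own proof consists of a single sentence referring back to Proposition~\ref{pro:jau} and saying the argument is the same, and you have simply written out that argument in the relative setting. Your explicit identification of the relevant gauge subgroup (those $\delta$ trivial on $T_0^0$) with the natural isomorphisms restricting to $\id_{F_{c_0}}$ is a point the paper leaves implicit in its notation $Col_{G,c_0}(T)/\m{G}_T$, so your write-up is in fact more careful than the original on this detail.
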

\pf

The proof is the same as that of proposition \ref{pro:jau}, i.e. we show that the functor $F_c$ is well defined and the map (\ref{pro:tec}) induces an isomorphism between the quotient spaces.

\qed

\subsection{Construction of the homotopical Turaev-Viro invariant}

From now on, every spherical category has an invertible dimension.

\subsubsection{Notations}

Let $\C$ be a spherical category. The graduator $\grad$ of $\C$ is a finite group. In this case the Eilenberg-Mac Lane space $K(\grad,1)$ is the classifying space $B\grad$. From now on, we will use the notation $B\grad$ and the terminology \emph{classifying space}. Let $M$ be a 3-manifold and $T$ be a triangulation of $M$, for every $x\in [M,B\grad]$, we denote by $Col_x(T)$ the set of colorings $c$ of $T$ such that the equivalence class $[c]$ in $Col_{\grad}(T)/\m{G}_T$ corresponds to $x$. We obtain a partition of the set $Col(T)$~: $\dis{Col(T)=\coprod_{x\in [M,B\grad]}Col_x(T)}$. If $c\in Col(T)$, we denote by $x_c\in [M,B\grad]$ the homotopy class associated to $c$ by the bijection (\ref{pro:suitebij}).

Let $M$ be a manifold,  $\Sigma$ be the boundary of $M$ and  $T_0$ be a triangulation of $\Sigma$. For every homotopy class $x_0\in [\Sigma,B\grad]$, we denote by $[M,B\grad]_{\Sigma,x_0}$ the set of homotopy classes of maps from $M$ to the classifying space $B\grad$ such that the homotopy class of the restriction to $\Sigma$ is $x_0$. Thus for every coloring $c_0\in Col(T_0)$ and for every triangulation $T$ of $M$ such that the restriction to $\Sigma$ is $T_0$, we have the isomorphisms :
\begin{equation}\label{isobord}
Col_{\grad,c_0}/(\m{G}_T) \cong \dis{Fun(\pi_1(T),\grad)_{F_{c_0}}/(iso)  \cong [M,B\grad]_{\Sigma,x_{c_0}}}\, .
\end{equation}

For every coloring $c_0\in Col(T_0)$ and for every homotopy class $y\in [M,B\grad]_{\Sigma,x_{c_0}}$, we denote by $Col_{c_0,y}(T)$ the set of colorings $c\in Col(T)$ satisfying :
\begin{itemize}
\item $c_{T_0}=c_0$,
\item  the equivalent class $[c]\in Col_{\grad,c_0}/\m{G}_T$ corresponds to  $y\in [M,B\grad]_{\Sigma,x_{c_0}}$ by the bijections (\ref{isobord}).
\end{itemize}

Let $\C$ be a spherical category, $M$ be a 3-manifold, $\Sigma$ be the boundary of $M$, $T_0$ be a triangulation of $\Sigma$ and $c_0\in Col(T_0)$. We can break up the Turaev-Viro state sum in the following way~:
\begin{align*}
TV_{\C}(M,c_0)&= \dc^{-n_0(T)+n_0(T_0)/2}\sum_{c \in Col_{c_0}(T)} w_c W_c \\
&= \dc^{-n_0(T)+n_0(T_0)/2} \sum_{x\in [M,B\grad]_{(\Sigma,x_{c_0})}}\sum_{c\in Col_{c_0,x}(T)}w_cW_c\, ,
\end{align*}
set : $\dis{HTV_{\C}(M,x,c_0)=\dc^{-n_0(T)+n_0(T_0)/2}\sum_{c\in Col_{c_0,x}}w_c W_c}$. Let us show that for every coloring $c_0\in Col(T_0)$, $HTV_{\C}(M,x,c_0)$ is an invariant for the triple $(M,x,c_0)$. To prove that we will show that for a fixed triangulation of the boundary of $M$ and a fixed coloring $c_0$ of the boundary $HTV(M,x,c_0)$ is invariant under the Pachner moves \cite{Pachner}. We recall briefly the Pachner theorem~:
\begin{theorem}[Pachner theorem \cite{Tu}]
Two triangulations of a compact 3-manifold which coincide on the boundary of $M$ can change one into the other by a finite sequence of ambient isotopy and/or the following local moves~:
\begin{figure}[h!]
\begin{tabularx}{11.5cm}{cXc}
$\vcenter{\hbox{\includegraphics[height=1.5cm,width=1.5cm]{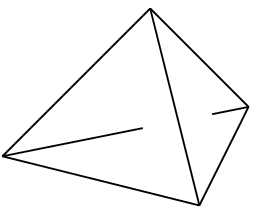}}}\hspace{.5cm}\longleftrightarrow\hspace{.5cm}\vcenter{\hbox{\includegraphics[height=1.5cm,width=1.5cm]{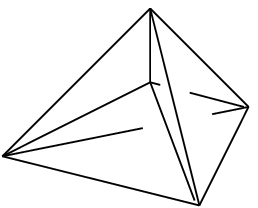}}}$
 & \quad \quad \quad & $\vcenter{\hbox{\includegraphics[height=1.5cm,width=1.5cm]{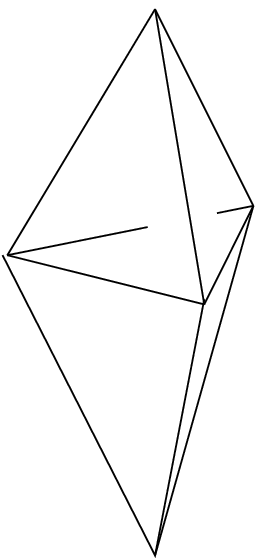}}}\hspace{.5cm}\longleftrightarrow\hspace{.5cm}\vcenter{\hbox{\includegraphics[height=1.5cm,width=1.5cm]{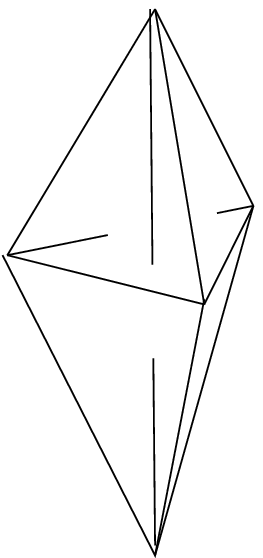}}}$
\\
& & \\
Pachner move  (1-4)
\label{des:P14} & \quad \quad \quad&
Pachner move  (2-3)
\label{des:P23}
\end{tabularx}
\end{figure}
\end{theorem}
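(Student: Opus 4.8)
The plan is to deduce the theorem from the classical combinatorial theory of piecewise-linear triangulations, reducing the bistellar (Pachner) equivalence to the older \emph{stellar} equivalence generated by the starring of simplices and its inverse. The first step rests on the Alexander--Newman theorem: two triangulations of one and the same PL manifold admit a common subdivision, and in fact become simplicially isomorphic after finitely many elementary stellar subdivisions and their inverses (stellar welds). I would invoke the \emph{relative} form of this statement for our two triangulations $T_1$ and $T_2$ of $M$, which agree on $\partial M$: the common subdivision can be chosen to restrict to the prescribed fixed triangulation of $\partial M$, so that the connecting sequence of stellar operations may be taken to be supported in the interior of $M$. The ambient isotopy appearing in the statement accounts for the freedom in the geometric placement of the vertices introduced along the way, allowing the combinatorial identification produced here to be realized by genuine simplicial subdivisions of the fixed manifold $M$.

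The second and central step is a purely local lemma: in dimension $3$, every elementary stellar subdivision---the starring of a vertex in the interior of a tetrahedron, of a triangle, or of an edge---is a finite composition of the moves (1-4) and (2-3) together with their inverses, and conversely each of (1-4) and (2-3) is itself a composition of stellar operations. Starring a tetrahedron is literally the move (1-4), so there is nothing to prove in the top case. For the remaining cases I would argue by induction on the dimension of the starred simplex and on the number of simplices in its link: starring an interior triangle $f$ involves only the two tetrahedra adjacent to $f$ and is handled by explicit finite bookkeeping with (1-4) and (2-3) moves, while starring an interior edge $e$---whose link is a triangulated circle and whose star is a cyclic fan of tetrahedra around $e$---is reduced, one fan simplex at a time, using (2-3) moves to shrink the fan and a (1-4)-type move to introduce the central vertex. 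Granting this lemma, the two equivalence relations coincide.

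I would then assemble the pieces: translating the interior stellar operations furnished by the first step into Pachner moves through the lemma of the second step produces a finite sequence of (1-4) and (2-3) moves and their inverses carrying $T_1$ to $T_2$ while fixing the triangulation of $\partial M$ throughout, after which the permitted ambient isotopy matches the resulting subdivisions geometrically inside $M$.

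The main obstacle I expect lies in the second step, and within it the edge-starring case: the link of an interior edge is an arbitrary triangulated circle, so expressing its stellar subdivision through bistellar moves requires a genuine induction on the size of the fan together with a careful verification that every intermediate complex is again a bona fide triangulation of $M$, with no degenerate or repeated simplices. A secondary but essential difficulty is the relative bookkeeping in the first step: one must check that the common-subdivision and stellar-equivalence machinery can be carried out \emph{rel} the fixed boundary triangulation, so that no operation ever alters $\partial M$. This is precisely where the hypothesis that the two triangulations coincide on the boundary, together with the permission to use ambient isotopy, enters in an essential way.
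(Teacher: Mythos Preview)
The paper does not prove this theorem: it is quoted as a known result with a reference to \cite{Tu} (and ultimately to Pachner's original paper \cite{Pachner}), and is used as a black box in the proof of Theorem~\ref{thm:inv}. So there is no ``paper's own proof'' to compare against.

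That said, your outline is the standard route and is essentially Pachner's original strategy: reduce bistellar equivalence to stellar equivalence via the Alexander--Newman theorem, and then show locally that each elementary stellar subdivision in dimension $3$ is a finite composite of the $(1\text{-}4)$ and $(2\text{-}3)$ moves and their inverses. Your identification of the delicate points is accurate: the edge-starring case requires an honest induction on the size of the circular link, and the relative version (operations supported away from $\partial M$) needs the relative form of Alexander's theorem. If you intend to actually write this proof out rather than cite it, those are exactly the two places that demand full detail; the rest is formal.
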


First, we will show that for every $c_0\in Col(T_0)$ the set $Col_{\grad,c_0}(T)/\m{G}_T$ is invariant under the Pachner moves.

 Let $T$ be a simplicial complex and $T_0$ be a subcomplex of $T$, we denote by $T_1$ the simplicial complex obtained from $T$ by substituting a 3-simplex $\vcenter{\hbox{\includegraphics[height=1cm,width=1cm]{figs/tetra1.eps}}}\hspace{.5cm}$ (non contained in $T_0$) with $\vcenter{\hbox{\includegraphics[height=1cm,width=1cm]{figs/tetra4.eps}}}\hspace{.5cm}$ (Pachner move 1-4). We denote by $T_2$ the simplicial complex obtained from $T$ by substituting $\vcenter{\hbox{\includegraphics[height=1cm,width=1cm]{figs/tetra2.eps}}}\hspace{.5cm}$ (non contained in $T_0$) with $\vcenter{\hbox{\includegraphics[height=1cm,width=1cm]{figs/tetra3.eps}}}\hspace{.5cm}$ (Pachner move 2-3). For every coloring $c\in Col(T_1)$ (resp. $Col(T_2)$), we denote by $c_T$ the restriction of $c$ to the simplicial complex $T$.

\begin{lem}\label{lem:invjau}
Let $T$ be a simplicial complex, $T_0$ be a simplicial subcomplex of $T$ and $c_0\in Col(T_0)$, the following maps :
\begin{align}
Col_{\grad,c_0}(T_1)/\m{G}_{T_1} & \fd Col_{\grad,c_0}(T)/\m{G}_{T}\label{T1}\\
[c]& \ap [c_T] \nonumber ,
\end{align}

\begin{align}
Col_{\grad,c_0}(T_2)/\m{G}_{T_2} & \fd Col_{\grad,c_0}(T)/\m{G}_{T}\label{T2}\\
[c] & \ap [c_T] \nonumber.
\end{align}

are bijective.
\end{lem}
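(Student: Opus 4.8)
\pf

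The plan is to prove both bijections directly at the level of $\grad$-colorings, before passing to the gauge quotients, by analysing the fibres of the restriction map $c\mapsto c_T$. First I would note that every $1$-simplex of $T$ is again a $1$-simplex of $T_1$ (resp.\ of $T_2$) and that the moves do not touch $T_0$, so restriction of $\grad$-colorings is a well-defined map $Col_{\grad,c_0}(T_i)\to Col_{\grad,c_0}(T)$ intertwining the restriction homomorphisms $\m{G}_{T_i}\to\m{G}_T$; these homomorphisms are surjective, with kernel the gauges supported on the vertices of $T_i$ not lying in $T$ (none for the (2-3) move, one new vertex for the (1-4) move). It then suffices, for each fixed $c\in Col_{\grad,c_0}(T)$, to describe the set of $\grad$-colorings of $T_i$ restricting to $c$ and to show it is a single orbit of $\ker(\m{G}_{T_i}\to\m{G}_T)$: existence of an extension yields surjectivity on the quotients, and the orbit statement together with surjectivity of $\m{G}_{T_i}\to\m{G}_T$ yields injectivity. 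On the boundary, $\m{G}_T$ is understood to consist of gauges trivial on $T_0^0$, as in Proposition~\ref{pro:colbound}; this convention only enters the bookkeeping.

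For the Pachner (1-4) move I would label the subdivided $3$-simplex $\{v_1,v_2,v_3,v_4\}$ and its new interior vertex $v_0$, so that the new $1$-simplexes are the $v_0v_i$ and the new $2$-simplexes the $v_0v_iv_j$. The triangle relation on $v_0v_iv_j$ forces $c'(v_0v_j)=c'(v_0v_i)\,c(v_iv_j)$ for any extension $c'$, so $c'$ is determined by the single value $g:=c'(v_0v_1)$ through $c'(v_0v_j)=g\,c(v_1v_j)$; conversely this formula does define an extension, because the relation on $v_0v_iv_j$ with $i,j\neq 1$ reduces to the relation $c(v_1v_j)=c(v_1v_i)c(v_iv_j)$ on the face $v_1v_iv_j$ of $\{v_1,v_2,v_3,v_4\}$, which holds since $c\in Col(T)$. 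Thus the extensions of $c$ are parametrised by $g\in\grad$, and two of them, for $g$ and $g'$, differ by the gauge supported at $v_0$ with value $g'g^{-1}$; since these gauges are exactly $\ker(\m{G}_{T_1}\to\m{G}_T)$, this gives the bijection (\ref{T1}).

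For the Pachner (2-3) move I would write the two glued tetrahedra as $\{v_0,v_1,v_2,v_3\}$ and $\{v_1,v_2,v_3,v_4\}$, meeting along $\{v_1,v_2,v_3\}$, replaced by the three tetrahedra around the new edge $v_0v_4$; here $T_2^0=T^0$, the only new $1$-simplex is $v_0v_4$, and the only new $2$-simplexes are the $v_0v_iv_4$ for $i=1,2,3$. The relation on $v_0v_iv_4$ forces $c'(v_0v_4)=c(v_0v_i)\,c(v_iv_4)$, and these three prescriptions agree: combining the relations of $c$ on $v_0v_1v_i$ and $v_1v_iv_4$ gives $c(v_0v_i)c(v_iv_4)=c(v_0v_1)c(v_1v_4)$ for $i=2,3$. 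So $c$ has a \emph{unique} extension $c'$ to $T_2$, and a short check (the relations of $T_2$ other than those on the $v_0v_iv_4$ are already relations of $c$) shows $c'$ is a coloring. Since $\m{G}_{T_2}=\m{G}_T$ and the unique-extension construction is $\m{G}_T$-equivariant, it descends to the bijection (\ref{T2}).

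The only genuine computations are the two independence checks, that $c'(v_0v_j)$ in the (1-4) case and $c'(v_0v_4)$ in the (2-3) case are well defined, and these are one-line manipulations of the triangle relations; everything else is bookkeeping of simplices and gauges. I expect the only real, and minor, obstacle to be keeping the boundary conventions for $\m{G}_T$ and the compatibility of the restriction maps with them straight. Conceptually the lemma is nothing but invariance of the fundamental groupoid rel $T_0$ under bistellar moves, so that Propositions~\ref{pro:jau} and~\ref{pro:colbound} would afford an alternative, less explicit proof; but because the moves are so concrete, the direct verification above is cleaner.

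\qed
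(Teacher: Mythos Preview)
Your proof is correct and follows essentially the same approach as the paper: for the $(1\text{-}4)$ move you show the fibre of $c\mapsto c_T$ over any $c$ is a free $\grad$-torsor via the value at the new vertex and hence a single orbit of the kernel of $\m{G}_{T_1}\to\m{G}_T$, and for the $(2\text{-}3)$ move you show the restriction map is a bijection on colorings (unique extension along the new edge) and hence on quotients. The paper carries out the same computations, only organised as separate surjectivity/injectivity checks rather than via your fibre-plus-kernel framing.
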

\pf Let us show that the map (\ref{T1}) is bijective. Set :
\begin{align*}
\phi_1 : Col_{\grad,c_0}(T_1) &\fd Col_{\grad,c_0}(T) \\
c & \ap c_T\, .
\end{align*}
This map is surjective.

Let $c,c'\in Col_{\grad,c_0}(T_1)$ such that $\phi_1(c)=\phi_1(c')$, we denote by $i$ the 0-simplex inside the simplicial complex $\vcenter{\hbox{\includegraphics[height=1cm,width=1cm]{figs/tetra4.eps}}}\hspace{.5cm}$ and the other 0-simplexes are denoted by an integer from 1 to 4. Set $\delta : T_1^0 \fd \grad$ such that $\delta(x)=\left\{\begin{array}{cc} c'(1i)^{-1}c(1i) & \mbox{if $x=i$} \\ \Bbbi & \mbox{otherwise}\end{array}\right.$. Thus for every oriented 1-simplex $(ki)$, where $k\in \{2,3,4\}$ :
\begin{align*}
c^{\delta}(ki)&=c(ki)\delta^{-1}(i) \\
&=c(k1)c(1i)\delta^{-1}(i) \\
&= c(k1)c(1i)c(1i)^{-1}c'(1i) \\
&= c'(k1)c'(1i) \\
&= c'(ki),
\end{align*}
moreover $c^{\delta}(1i)=c(1i)\delta(i)^{-1}=c'(1i)$. Thus $\phi_1$ defines a map $\ov{\phi_1}$ from the quotient space $Col_{\grad,c_0}(T_1)/\m{G}_{T_1}$ to the quotient space $Col_{\grad,c_0}(T)/\m{G}_T$. This map is well defined since if $c=c'^{\delta}$ then $c_T={c'_T}^{\delta}$ and is surjective. Let us show that $\ov{\phi_1}$ is injective. Let $c,c'\in Col_{\grad,c_0}(T_1)$, if $\ov{\phi_1([c])}=\ov{\phi_1([c'])}$ then $[c_T]=[c'_T]$ thus there exists $\delta\in \m{G}_T$  such that : $c_T={c'_T}^{\delta}$. Set : $\delta'(x)=\left\{\begin{array}{cc} c(1i)^{-1}\delta(1)c'(1i) & \mbox{ if $x=i$}\\ \delta(x) & \mbox{otherwise} \end{array}\right.$. Thus  we have :
\begin{align*}
c'^{\delta'}(1i)&= \delta'(1)c'(1i)\delta'(i)^{-1}\\
&=\delta(1)c'(1i)c'(1i)^{-1}\delta(1)^{-1}c(1i)\\
&=c(1i)
\end{align*}
and for every $k\in \{2,3,4\}$ :
\begin{align*}
c'^{\delta'}(ki)&= c'^{\delta'}(k1)c'^{\delta'}(1i)\\
&= c'^{\delta'}(k1)c(1i)\\
&= c'^{\delta}(k1)c(1i)\\
&= c(k1)c(1i)\\
&= c(ki)\, .
\end{align*}
Thus $\ov{\phi_1}$ is a bijection from $Col_{\grad,c_0}(T_1)/\m{G}_{T_1}$ to $Col_{\grad,c_0}(T)/ \m{G}_T$.

Let us show that the map (\ref{T2}) is a bijection. Set :
\begin{align*}
\phi_2 : Col_{\grad,c_0}(T_2) &\fd Col_{\grad,c_0}(T) \\
c & \ap c_T.
\end{align*}

Let $c,c'\in Col_{\grad}(T_2)$ such that $\phi_2(c)=\phi_2(c')$, then the colorings $c$ and $c'$ are equals on the oriented 1-simplexes of  $T$. Let us show that the equality is still true on the remaining 1-simplex. We set the following numbering : $$
\begin{psfrags}
\psfrag{i}{$i$}
\psfrag{j}{$j$}
\psfrag{k}{$k$}
\psfrag{l}{$l$}
\psfrag{m}{$m$}
\vcenter{\hbox{\includegraphics[height=2cm,width=2cm]{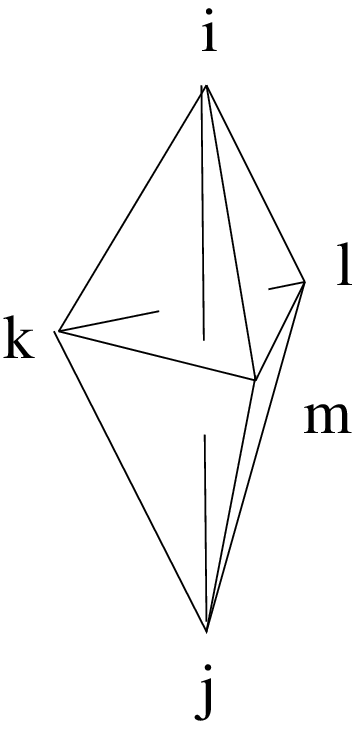}}}.
\end{psfrags}
$$
It follows : $c(ij)=c(ik)c(kj)=c'(ik)c'(kj)=c'(ij)$. Thus the map $\phi_2$ is injective. Let $c$ be a coloring of $T$ and set the following numbering :
$$
\begin{psfrags}
\psfrag{i}{$i$}
\psfrag{j}{$j$}
\psfrag{k}{$k$}
\psfrag{l}{$l$}
\psfrag{m}{$m$}
\vcenter{\hbox{\includegraphics[height=2cm,width=2cm]{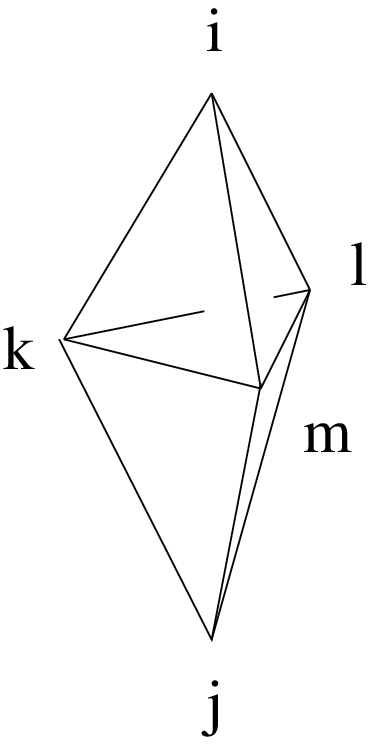}}}.
\end{psfrags}
$$
Set : $c'(e)=\left\{\begin{array}{cc}  c(ik)c(kj) & \mbox{if $e = (ij)$}\\c(e) & \mbox{otherwise}\\\end{array}\right.$, this a coloring of $T_2$, indeed :
\begin{align*}
c'(il)c'(lj)&=c(il)c(lj) \\
&= c(ik)c(kl)c(lj) \\
&=c'(ij)
\end{align*}
and : $c'(im)c'(mj)=c'(ij)$. Furthermore we have $\phi_2(c')=c$, thus we have a bijection between $Col_{\grad,c_0}(T)$ and $Col_{\grad,c_0}(T_2)$. It induces the bijection (\ref{T2}).

\qed

\begin{theorem}\label{thm:inv}
Let $\C$ be a spherical, $M$ be 3-manifold, $\Sigma$ be the boundary of $M$ and $T_0$ be a triangulation of $\Sigma$. For every coloring $c_0\in Col(T_0)$ and for every homotopy class $x\in [M,B\grad]_{\Sigma,x_{c_0}}$, where $x_{c_0}\in [\Sigma,B\grad]$ is obtained from $c_0$, the vector :
$$HTV_{\C}(M,c_0,x)=\dc^{-n_0(T)+n_0(T_0)/2}\sum_{c\in Col_{c_0,x}(T)}w_cW_c \in V_{\C}(\Sigma,T_0,c_0)\,$$
is an invariant of the triple $(M,x,c_0)$. We have the following equality~:
\begin{equation}\label{TVsplit}
TV_{\C}(M,c_0)=\sum_{x\in [M,B\grad]_{\Sigma,x_{c_0}}}HTV_{\C}(M,c_0,x)\, .
\end{equation}
\end{theorem}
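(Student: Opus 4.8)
The plan is to prove the invariance of $HTV_{\C}(M,c_0,x)$ under the Pachner moves, and then read off the splitting (\ref{TVsplit}) from the already-established partition of the colorings. The starting point is the refined state-sum decomposition derived just above the statement:
\[
TV_{\C}(M,c_0)=\dc^{-n_0(T)+n_0(T_0)/2}\sum_{x\in [M,B\grad]_{\Sigma,x_{c_0}}}\ \sum_{c\in Col_{c_0,x}(T)}w_cW_c\,,
\]
so that (\ref{TVsplit}) holds tautologically once each inner sum is shown to be a topological invariant. Hence the entire content is the well-definedness of $HTV_{\C}(M,c_0,x)$.

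First I would recall that, by the classical Turaev-Viro argument (\cite{BW}, \cite{GK}, \cite{Tu}), the full state sum $\sum_{c\in Col_{c_0}(T)}w_cW_c$ is invariant under the Pachner moves $(1\text{-}4)$ and $(2\text{-}3)$, the invariance being proved locally: for move $(2\text{-}3)$ it is the Biedenharn-Elliott / pentagon identity for the $6j$-symbols, and for move $(1\text{-}4)$ it is the orthogonality identity together with the normalization factor $\dc$ and the $\dim$-weights attached to interior edges. The key observation is that these local identities are \emph{graded}: in move $(2\text{-}3)$ the two triangulations $T$ and $T_2$ have the same $1$-skeleton except for the single diagonal edge, and Lemma \ref{lem:invjau} gives a canonical bijection $Col_{\grad,c_0}(T_2)/\m{G}_{T_2}\cong Col_{\grad,c_0}(T)/\m{G}_T$ which sends $[c]\mapsto[c_T]$; similarly for move $(1\text{-}4)$ via the bijection (\ref{T1}). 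So the plan is: fix a homotopy class $x$, and observe that the local Turaev-Viro identity, summed only over those colorings $c$ of the affected tetrahedra whose class lies over $x$, still holds — because the summation variable in the $6j$-identities is precisely the color of the edge that gets added or removed, and (by the multiplicativity property of the graduation, Proposition \ref{pro:graduation}, applied around the $2$-simplices of the new tetrahedra) changing that edge-color does not change the class $[c]\in Col_{\grad,c_0}(\cdot)/\m{G}$. Concretely: for move $(2\text{-}3)$, if $c,c'\in Col_{c_0}(T_2)$ restrict to the same coloring of $T$, then $[c]=[c']$ (indeed they have the same underlying $\grad$-coloring, since the diagonal edge's class is forced by $c(ij)=c(ik)c(kj)$ in $\grad$); so the fibers $Col_{c_0,x}(T_2)$ and $Col_{c_0,x}(T)$ match up under restriction exactly as in Lemma \ref{lem:invjau}, and the $(2\text{-}3)$ identity restricts to each fiber. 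For move $(1\text{-}4)$, the added interior vertex $i$ comes with three new edges whose $\grad$-classes are gauge-equivalent to the trivial gauge change computed in the proof of Lemma \ref{lem:invjau}, so again the decomposition of $Col_{c_0}(T_1)$ into fibers over $[M,B\grad]_{\Sigma,x_{c_0}}$ is compatible with the decomposition of $Col_{c_0}(T)$, and the orthogonality identity (with its $\dc$-factor, which is accounted for by $n_0(T_1)=n_0(T)+1$) holds fiberwise.

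Then the argument concludes as follows. By Pachner's theorem (as quoted above), any two triangulations of $M$ extending $T_0$ are related by a finite sequence of moves $(1\text{-}4)^{\pm1}$ and $(2\text{-}3)^{\pm1}$ together with ambient isotopies fixing the boundary; isotopy invariance is clear since nothing in the definition depends on the embedding. By the fiberwise local identities just described, each such move leaves $HTV_{\C}(M,c_0,x)$ unchanged — here one uses Lemma \ref{lem:invjau} (and (\ref{isobord})) to transport the index set $[M,B\grad]_{\Sigma,x_{c_0}}$ and the fibers $Col_{c_0,x}(T)$ canonically between $T$ and the modified triangulation, so that the statement "$HTV$ for $x$ with respect to $T$ equals $HTV$ for $x$ with respect to $T'$" even makes sense. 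Therefore $HTV_{\C}(M,c_0,x)$ depends only on $(M,c_0,x)$, and summing (\ref{TVsplit}) over all $x$ recovers $TV_{\C}(M,c_0)$ by the partition $Col_{c_0}(T)=\coprod_x Col_{c_0,x}(T)$. Finally, for $M$ closed one takes $T_0=\emptyset$, $c_0$ the empty coloring, $x_{c_0}$ the class of the constant map, and $[M,B\grad]_{\Sigma,x_{c_0}}=[M,B\grad]$, giving $TV_{\C}(M)=\sum_{x\in[M,B\grad]}HTV_{\C}(M,x)$.

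The main obstacle is the bookkeeping in the fiberwise version of the $(1\text{-}4)$ move: one must check that the interior vertex together with its three new edges contributes colorings all lying in a \emph{single} $\grad$-gauge class over the given $x$, so that the orthogonality identity — which sums over the colors of those new edges subject to the admissibility constraints at the four new $2$-simplices — really does respect the fibration $Col_{c_0}(T_1)\to[M,B\grad]_{\Sigma,x_{c_0}}$. This is exactly what the gauge-transformation computation in the proof of Lemma \ref{lem:invjau} is set up to handle (the map $\delta$ supported at the new vertex $i$), so the obstacle is more a matter of carefully invoking that lemma than of new mathematics; but it is the step where the interplay between the graduation, the gauge action, and the state-sum normalization must be verified with care. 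A secondary, purely notational, point is to make precise the canonical identification of the index sets $[M,B\grad]_{\Sigma,x_{c_0}}$ attached to different triangulations via (\ref{isobord}) and Lemma \ref{lem:invjau}, so that the invariance statement is formulated coherently.
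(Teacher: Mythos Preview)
Your proposal is correct and follows essentially the same approach as the paper: both arguments reduce to showing that the local Turaev-Viro identities for the Pachner moves hold fiberwise over each homotopy class $x$, and both use Lemma \ref{lem:invjau} (the bijections (\ref{T1}) and (\ref{T2})) to verify that restriction $c'\mapsto c'_T$ carries $Col_{c_0,x}(T_i)$ onto $Col_{c_0,x}(T)$, so that the known identity $w_cW_c=\dc^{-1}\sum_{c'_T=c}w_{c'}W_{c'}$ (resp.\ $w_cW_c=\sum_{c'_T=c}w_{c'}W_{c'}$) already sums only over the correct fiber. One minor slip: in the $(1\text{-}4)$ move the interior vertex $i$ is joined to \emph{four} new edges (to each vertex of the original tetrahedron), not three; this does not affect the argument, since the gauge $\delta$ supported at $i$ in Lemma \ref{lem:invjau} adjusts all of them simultaneously.
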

\pf
The splitting (\ref{TVsplit}) comes from the Turaev-Viro state-stum and the partition of the set of colorings. Let us show that $HTV_{\C}(M,c_0,x)$ is an invariant under the Pachner move (1-4). As previously, we denote by $T_1$ the simplicial complex obtained from $T$ by substituting $\vcenter{\hbox{\includegraphics[height=1cm,width=1cm]{figs/tetra1.eps}}}\hspace{.5cm}$ with $\vcenter{\hbox{\includegraphics[height=1cm,width=1cm]{figs/tetra4.eps}}}\hspace{.5cm}$. To show the invariance under the Pachner move (1-4), we must show  the following equality :
$${w_{c}W_{c}=\dc^{-1}\sum_{\substack{c'\in Col_{c_0,x}(T_1)\\ c'_{T}=c}}w_{c'}W_{c'}}\, ,$$
for every coloring $c\in Col_{c_0,x}(T)$. By construction of the Turaev -Viro invariant \cite{Tu}, for every coloring $c\in Col_{c_0,x}(T)$ we have~:~$\dis{w_{c}W_{c}=\dc^{-1}\sum_{\substack{c'\in Col_{c_0}(T_1)\\ c'_{T}=c}}w_{c'}W_{c'}}\, .$ Using the bijection (\ref{T1}), we know that for every coloring $c\in Col(T)_{c_0,x}$, if $c'\in Col_{c_0}(T_1)$ and $c'_T=c$ then $c'\in Col_{c_0,x}(T_1)$. It results the invariance under the Pachner move (1-4).

Let us show the invariance under the Pachner move (2-3). As previously, we denote by $T_2$  the simplicial complex obtained from $T$ by substituting $\vcenter{\hbox{\includegraphics[height=1cm,width=1cm]{figs/tetra2.eps}}}\hspace{.5cm}$ with $\vcenter{\hbox{\includegraphics[height=1cm,width=1cm]{figs/tetra3.eps}}}\hspace{.5cm}$. To show the invariance under the Pachner move (2-3), we must show the following equality : $\dis{w_{c}W_{c}=\sum_{\substack{c'\in Col_{c_0,x}(T_2)\\ c'_{T}=c}}w_{c'}W_{c'}}\, ,$ for every coloring $c\in Col_{c_0,x}(T)$. Let $c\in Col_{c_0,x}(T)$, by construction of the Turaev-Viro invariant we have :
$${w_{c}W_{c}=\sum_{\substack{c'\in Col_{c_0}(T_2)\\ c'_{T}=c}}w_{c'}W_{c'}}\, ,$$
 using the bijection (\ref{T2}), we know that if $c\in Col_{c_0,x}(T)$ then for every coloring $c'\in Col_{c_0}(T_2)$ such that $c'_T=c$, we have : $c'\in Col_{c_0,x}(T_2)$. It follows the invariance under the Pachner move (2-3).

\qed

\begin{cor}\label{cor:split}
Let $\C$ be a spherical category and $M$ be a 3-manifold without boundary, the splitting of the Turaev-Viro invariant (\ref{TVsplit}) is~:
\begin{equation}\label{TVsplitwb}
TV_{\C}(M)=\sum_{x\in [M,B\grad]}HTV_{\C}(M,x)\, .
\end{equation}
\end{cor}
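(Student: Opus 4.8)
The plan is to obtain this as the special case of Theorem \ref{thm:inv} in which the boundary is empty, so that the only work consists in unwinding the conventions. First I would observe that when $M$ is closed, $\Sigma=\partial M=\emptyset$: the unique triangulation $T_0$ of $\Sigma$ is the empty one, $Col(T_0)$ consists of a single (empty) coloring $c_0$, and $n_0(T_0)=0$. Hence $V_{\C}(\Sigma,T_0,c_0)=\Bbbk$ is the empty tensor product, the normalization exponent $-n_0(T)+n_0(T_0)/2$ collapses to $-n_0(T)$, and $w_c=\prod_{e\in T^1\backslash T^1_0}\dim(c(e))=\prod_{e\in T^1}\dim(c(e))$ since there are no boundary edges.

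Next I would note that $[\Sigma,B\grad]$ is a one-point set, so $x_{c_0}$ is its unique element and $[M,B\grad]_{\Sigma,x_{c_0}}=[M,B\grad]$ with no constraint; moreover $Col_{c_0,x}(T)=Col_x(T)$ for every $x$. Under these identifications the invariant $HTV_{\C}(M,c_0,x)$ of Theorem \ref{thm:inv} is exactly the scalar $HTV_{\C}(M,x)=\dc^{-n_0(T)}\sum_{c\in Col_x(T)}w_cW_c$ from the Introduction, and the equality (\ref{TVsplit}) of Theorem \ref{thm:inv} reads $TV_{\C}(M,c_0)=\sum_{x\in[M,B\grad]}HTV_{\C}(M,x)$.

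Finally, since $Col(T_0)$ is a singleton, the defining relation $TV_{\C}(M)=\sum_{c_0\in Col(T_0)}TV_{\C}(M,c_0)$ reduces to $TV_{\C}(M)=TV_{\C}(M,c_0)$, which combined with the previous display yields (\ref{TVsplitwb}). There is no real obstacle here: the only point requiring care is the consistency of the empty-boundary bookkeeping, namely the collapse of the $\dc$-exponent and of the weight $w_c$, and this is immediate from the conventions fixed in Section \ref{sec:TVconstruc}.
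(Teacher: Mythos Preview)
Your proposal is correct and matches the paper's approach: the corollary is stated immediately after Theorem \ref{thm:inv} with no separate proof, being understood as the specialization to empty boundary exactly as you describe. Your unwinding of the empty-boundary conventions (single empty coloring, $n_0(T_0)=0$, $V_{\C}(\emptyset,T_0,c_0)=\Bbbk$, $[M,B\grad]_{\Sigma,x_{c_0}}=[M,B\grad]$) is accurate and is precisely what the paper leaves implicit.
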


\medskip

The invariant $HTV_{\C}$ is called \emph{the homotopical Turaev-Viro invariant}.

\medskip

We can extend the Turaev-Viro and the homotopical Turaev-Viro invariant to singular triangulated manifolds. Indeed there is a Pachner theorem for singular triangulated manifolds~:
\begin{theorem}[\cite{BW}]
Two triangulated singular 3-manifolds are piecewise-linear homeomorphic if and only if they are related by a finite sequence of ambient isotopy and/or Pachner moves (1-4) and/or (2-3).
\end{theorem}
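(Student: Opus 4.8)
The plan is to adapt Pachner's proof of the bistellar flip theorem for combinatorial manifolds to the singular setting, localizing all of the combinatorics away from the finitely many singular vertices and invoking the classical two-dimensional case in cone neighborhoods of those vertices. The ``if'' direction is immediate: each Pachner move (1-4) and (2-3) replaces a triangulated $3$-ball embedded in the complex (the star of a freshly added interior vertex, respectively the union of the two tetrahedra sharing a face) by another triangulation of the same ball with the same boundary, so it does not change the underlying polyhedron, and ambient isotopies preserve the PL-homeomorphism type by definition. Hence any finite sequence of these operations relates PL-homeomorphic singular $3$-manifolds.

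For the converse, let $f\colon |T|\to|T'|$ be a PL homeomorphism. Since $f$ is simplicial with respect to suitable subdivisions of $T$ and $T'$, after using $f$ to identify the underlying polyhedra it suffices to show that any two triangulations $T,T'$ of one fixed singular $3$-polyhedron $X$ are connected by Pachner moves. First I would invoke Alexander's stellar subdivision theorem: $T$ and $T'$ admit a common iterated stellar subdivision, hence are connected by a finite sequence of stellar subdivisions and their inverses. This needs a singular version of Alexander's theorem, which one gets by observing that the singular set $S$ --- the vertices whose link fails to be a $2$-sphere --- is finite and isolated; on $X\setminus S$ the classical argument applies unchanged, while near a singular vertex $p$ a regular neighborhood is the cone on the fixed surface $\mathrm{lk}(p)$, so a common subdivision there is the cone of a common subdivision of $\mathrm{lk}(p)$, which exists by the two-dimensional case. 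Second, I would realize each stellar move by Pachner moves. When the subdivided simplex lies in the manifold part $X\setminus S$ its link is a PL sphere, and Pachner's lemma --- a downward induction on the dimension of the starred simplex that rewrites a stellar subdivision in terms of bistellar moves on the link --- expresses the move as a composite of (1-4) and (2-3) moves. A stellar move supported near a singular vertex $p$ takes place inside the cone on $\mathrm{lk}(p)$ and is the cone of a stellar move on the surface $\mathrm{lk}(p)$; by the two-dimensional Pachner theorem the latter is a composite of flips, and coning these flips inside the cone neighborhood produces three-dimensional (1-4) and (2-3) moves.

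I expect the second step to be the main obstacle. Pachner's inductive argument rests crucially on the fact that in a combinatorial manifold the link of every simplex is a PL sphere, and this fails exactly at the singular vertices, where links are arbitrary closed surfaces. The delicate point is to organize the reduction so that every stellar subdivision is split into a part performed strictly inside $X\setminus S$, where Pachner's lemma applies verbatim, and a part performed inside a cone neighborhood of a singular vertex, where one must lift two-dimensional bistellar moves on the surface link to three-dimensional moves while checking that the resulting local modifications are genuine (1-4) and (2-3) moves that leave the rest of the triangulation untouched; the subdivisions whose support straddles the boundary of such a cone neighborhood --- those that subdivide a simplex having a singular vertex --- are where the bookkeeping is heaviest.
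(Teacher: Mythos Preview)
The paper does not prove this theorem; it is quoted verbatim as a result from the cited reference \cite{BW} (Barrett--Westbury) and used as a black box to extend the Turaev--Viro and homotopical Turaev--Viro invariants to singular triangulations. There is therefore no proof in the paper to compare your proposal against.

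That said, your outline is broadly the right shape for how one actually proves such a statement: the ``if'' direction is trivial, and for the converse one reduces to a common subdivision via Alexander-type arguments and then rewrites stellar moves as bistellar (Pachner) moves. Your identification of the main difficulty --- that Pachner's inductive lemma relies on spherical links, which fails precisely at the singular vertices --- is accurate, and the strategy of treating cone neighborhoods of singular vertices separately by lifting two-dimensional flips on the link surface is sensible. If you want to see the details carried out, the original reference \cite{BW} is the place to look; the paper under review simply imports the result.
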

Thus we can defined the Turaev-Viro invariant and the homotopical Turaev-
Viro invariant to singular triangulated manifold. In this case we obtain the same splitting (\ref{TVsplit}) and (\ref{TVsplitwb}).

\section{Examples}\label{sec:calcul}

\subsection{The three dimensional sphere $S^3$}

The three dimensional sphere $S^3$ admits a singular triangulation with one tetrahedron \cite{Jacorubin}, denoted by $(x_1,x_2,x_3,x_4)$ , and the following identifications of the 2-simplexes~:
\begin{align*}
(x_1,x_4,x_2)&=(x_3,x_4,x_2) \, ,\\
(x_1,x_3,x_4)&=(x_3,x_2,x_1) \, .
\end{align*}
After this identification, we obtain a singular triangulation with one 0-0-simplex denoted by $x$ and two 1-simplexes denoted by $a$ and $b$. Below is a figure of this singular triangulation.

\[
\xy
(0,0)*+{x}="A"; (15,-5)*+{x}="B";
(25,5)*+{x}="C";(8,15)*+{x}="D";
"B";"A" **\dir{-} ?(.5)*\dir{>}+(-1,-2)*{\scriptstyle a}; "A";"D" **\dir{-} ?(.5)*\dir{>}+(-2,0)*{\scriptstyle a} ; "B";"C" **\dir{-} ?(.5)*\dir{>}+(1,-1)*{\scriptstyle a}; "D";"B" **\dir{-} ?(.5)*\dir{>>}+(-2.5,2)*{\scriptstyle b}; "C";"D" **\dir{-} ?(.5)*\dir{>}+(2,1)*{\scriptstyle a} ; "C";"A" **\dir{--} ?(.5)*\dir{>}+(1.5,2)*{\scriptstyle a};
(13,-10)*{S^3};
\endxy
\]

\subsubsection{Group categories}

\subsubsection*{{\bf Colorings.}}

Let $G$ be a finite group, $\C$ be a group category such that $G$ is the group of scalar objects of $\C$ (up to isomorphism) and $c$ be a coloring of $S^3$. With the above notations, we set $c(a)=g$ and $c(b)=h$, by construction of the singular triangulation of $S^3$, we obtain the following relations~:
\begin{align*}
g^2&=g\, ,\\
g^2h&=1\, .
\end{align*}
It results that for every coloring $c$, we have~: $c(a)=c(b)=1$. Reciprocally the data $(1,1)\in G^2$ defines a coloring of $S^3$. Since there is only one coloring, the homotopical Turaev-Viro invariant is equal to the Turaev-Viro invariant~:
\begin{equation*}
TV_{\C}(S^3)=HTV_{\C}(S^3,0)=\frac{1}{\sharp G}\, ,
\end{equation*}
where $0\in [S^3,BG]$ is the trivial homotopy class.
\subsubsection{$U_q(\mathfrak{sl}_2)$ with $q$ root of unity}
Let $r\geq 3$ and $A$ be a primitive $2r$-th root of unity such that $A^2=q$ is a primitive $r$-th  root of unity. The set of scalar objects (up to isomorphism) is given by the set of integers $\{0,...,r-2\}$. The graduator of this category is the cyclic group $\Zz_2$. Throughout this section we will consider the group $\Zz_2$ endowed with the  multiplicative notation. The dimension of the  scalar object $i$ is : $\dim(i)=(-1)^i\frac{A^{i+1} - A^{-i-1}}{A - A^{-1}}=(-1)^i[i+1]_q$, with $0\leq i\leq r-2$. The dimension of the category is the scalar~:~$\dis{\Delta_{U_q(\frak{sl}_2)}=\sum_{i=0}^{r-2}\dim(i)^2=\frac{-2r}{(A-A^{-1})^2}}$.

\subsubsection*{{\bf Colorings}}

For every coloring $c$ of the above singular triangulation of $S^3$, we set : $X=c(a)$ and $Y=c(b)$. By construction of the singular triangulation of $S^3$, one gets~:
\begin{enumerate}
\item[(i)] $3X\leq 2r-4$ and $2X+Y\leq 2r-4$,
\item[(ii)] $Y\leq 2X$,
\item[(iii)] $X=0$ mod 2 and $Y=0$ mod 2\, .\\
\end{enumerate}
Conversely a pair of positive integers $(X,Y)$, with $0\leq X,Y\leq r-2$, which respects the conditions $(i)$, $(ii)$ and $(iii)$ defined a coloring $c$ of $S^3$, in such a way~: $c(a)=X$ and $c(b)=Y$. From now on, we denote $c=(X,Y)$ a coloring of $S^3$.

Let $c=(X,Y)$ be a coloring of $S^3$, we denote by $\cs{X}$ the image of the scalar object $X$ in the graduator $\Zz_2$. The relation $(iii)$ implies~: $\cs{X}=\cs{Y}=1$, thus there is a unique equivalence class of coloring. It follows that the Turaev-Viro  invariant is equal to the homotopical Turaev-Viro invariant~:

\begin{equation}\label{TVS3}
  TV(S^3)=HTV(S^3,0)=\frac{1}{\Delta}\, ,
\end{equation}
where $0\in [S^3,B\Zz_2]$ is the trivial homotopy class.

\subsection{The 3-torus $\torust$}

Below is a singular triangulation of the 3-torus $\torust$~:

\[
\xy (0,0)*+{x}="A"; (30,0)*+{x}="B";
(0,40)*+{x}="C";(30,40)*+{x}="D";(20,50)*+{x}="E";(50,50)*+{x}="F";(50,10)*+{x}="G";(20,10)*+{x}="H";
  "A";"B" **\dir{-} ?(.5)*\dir{>}+(0,-3)*{\scriptstyle a};  "A";
"C" **\dir{-} ?(.5)*\dir{>>}+(-3,0)*{\scriptstyle c};  "C"; "D" **\dir{-} ?(.5)*\dir{>}+(2,-1)*{\scriptstyle a} ;  "B";"D" **\dir{-} ?(.5)*\dir{>>}+(1,0)*{\scriptstyle c} ; "C";"E"**\dir{-} ?(.5)*\dir{)}+(-3,0)*{\scriptstyle b}; "C";"F" **\dir{-} ?(.5)*\dir{|-}+(1,-1)*{\scriptstyle f} ; "E";"F"**\dir{-} ?(.5)*\dir{>}+(0,2)*{\scriptstyle a}; "D";"F"**\dir{-} ?(.5)*\dir{)}+(-1,-2)*{\scriptstyle b};  "G";"F"**\dir{-} ?(.5)*\dir{>>}+(2,0)*{\scriptstyle c};"B";"G"**\dir{-} ?(.5)*\dir{)}+(2,-1)*{\scriptstyle b};"H";"G" **\dir{--} ?(.5)*\dir{>}+(-3,1)*{\scriptstyle a};"A";"G" **\dir{--} ?(.5)*\dir{|-}+(2,-1)*{\scriptstyle f};"H";"E"
**\dir{--} ?(.5)*\dir{>>}+(1,2)*{\scriptstyle c};"A";"E" **\dir{--} ?(.5)*\dir{>>|}+(2,0)*{\scriptstyle d};"B";"F" **\dir{-} ?(.5)*\dir{>>|}+(2,0)*{\scriptstyle d};"A";"D"**\dir{-} ?(.5)*\dir{>|}+(1.75,0)*{\scriptstyle e};"F";"A" **\dir{--} ?(.5)*\dir{}+(0,2)*{\scriptstyle g};"A";"H" **\dir{--}?(.5)*\dir{)}+(0,2)*{\scriptstyle b};"H";"F"
**\dir{--}?(.5)*\dir{>|}+(2,0)*{\scriptstyle e};
(25,-8)*{\torust}
\endxy
\]

\subsubsection{Group categories.}

\subsubsection*{{\bf Colorings.}}

Let $G$ be a finite group, $\alpha\in H^3(G,\Bbbk^*)$. Let $\tilde{c}$ be a coloring of the above singular triangulation of $\torust$, it follows that $\tilde{c}$ is defined by the 7-tuple $(a,b,c,d,e,f,g,h)$. Furthermore by definition of the colorings, we have the following relations~:
\begin{align*}
d&= bc=cb\, ,  \\
e&= ac = ca \, , \\
f&= ab =ba \, ,\\
g &= fc =abc \, .
\end{align*}
It follows that the coloring $\tilde{c}$ is described by the triple $(a,b,c)$, where $ab=ba$, $ac=ca$ and $bc=cb$. Reciprocally, every triple $(a,b,c)$ such that $ab=ba$, $ac=ca$ and $bc=cb$ defined a coloring of $\torust$. For every $g\in G$, we denote by $N_g$ the set $\{h\in G | hg=gh\}$. The set of coloring of the 3-torus for the above singular triangulation is $\{(a,b,c)\in G^3|b\in N_g \, ,l c\in N_g \mbox{ and } bc=cb\}$. From now on, every coloring $\tilde{c}$ of the 3-torus will be denoted by $\tilde{c}=(a,b,c)$, with $a,b,c\in G^3$ such that $b,c\in N_g$ and $bc=cb$.

\subsubsection{{\bf Gauge actions.}}

We have built a  singular triangulation with one 0-0-simplex, thus the gauge group on this singular triangulation can be identified to the group $G$. Since the graduator of a group category is the group of scalar objects (up to isomorphism). Thus the colorings have value in the graduator and so the gauges act directly on the colorings. Let us describe the equivalent classes of colorings under the gauge action.

 Let $\tilde{c}=(a,b,c)$ and $\tilde{c}'=(a',b',c')$  be two colorings of the 3-torus, they are equivalent under the gauge action if and only if there exists a gauge $h\in G$,  such that~:
\begin{align}
a' &= hah^{-1}\, ,\\
b' &= hbh^{-1}\, ,\\
c' &=hch^{-1}\, .
\end{align}
The equivalence classes of colorings are the conjugacy classes of the triple $(a,b,c)$. Notice that if the group $G$ is abelian, there are exactly $G^3$ equivalence classes of colorings.

\medskip

{\bf Computation for the cyclic group $\Zz_N$}

\medskip

 Let us recall that the cohomology group $H^3(\Zz_N,U(1))$ is $\Zz_N$ and is generated by the cohomology class of $\alpha_N$ defined as follows (See \cite{Moore})~:

\begin{equation}
\alpha_N(x,y,z)=exp((2i\pi/N^2)\ov{z}(\ov{x}+\ov{y}-\ov{xy}))\, , \label{cocycle}
\end{equation}
where $\ov{x}$ denotes the integer between $0$ and $N-1$ representing an element $x\in\Zz_N$. We have :

$$
\alpha_N(x,y,z)=\left\{
\begin{array}{cc}
1 & \mbox{when $x$ or $y$ or $z$ is equal to 1}\, ,\\
1 & \mbox{when $\ov{x}+\ov{y}<N$}\, ,\\
exp(2\pi\ov{z}/N) & \mbox{when $\ov{x}+\ov{y}\geq N$}
\end{array}
\right.
$$

In the case of Group categories the Turaev-Viro invariant and the Dijkgraaf Witten invariant are the same \cite{TVDW}, it follows ~:
\begin{equation}
  TV_{\Zz_N,\alpha_N}(\torust)=\frac{1}{N}\sum_{(a,b,c)\in \Zz_N^3}\frac{\alpha_N(a,b,c)\alpha_N(b,c,a)\alpha_N(c,a,b)}{\alpha_N(a,c,b)\alpha_N(c,b,a)\alpha_N(b,a,c)}=N^2\, .
\end{equation}
The homotopical Turaev-Viro invariant is~:
\begin{equation}
HTV_{\Zz_N,\alpha_N}(\torust,\_)=\left(\frac{1}{N}\frac{\alpha_N(a,b,c)\alpha_N(b,c,a)\alpha_N(c,a,b)}{\alpha_N(a,c,b)\alpha_N(c,b,a)\alpha_N(b,a,c)} \right)_{(a,b,c)\in \Zz_N^3}=\left(\frac{1}{N}\right)_{(a,b,c)\in \Zz_N^3}
\end{equation}

\subsubsection{The quantum group $U_q(\frak{sl}_2)$}

\subsubsection*{{\bf Colorings}}

Let $r\geq 3$, $A$ be a $2r$-th root of unity such that $A^2=q$ is a $r$-th root of unity. Let $\tilde{c}$ be a coloring of the triangulation of $\torust$, the coloring $\tilde{c}$ is determined by the 7-tuple $(a,b,c,d,e,f,g)$ which verifies~:
\begin{itemize}
\item[(i)] $0\leq a,b,c,d,e,f,g,h \leq r-2$\,
\item[(ii)] $(a,b,f)$, $(b,c,d)$, $(a,c,e)$, $(c,f,g)$ are admissible triple, \\
\end{itemize}

\subsubsection*{{\bf Gauge actions}}

Let us describe the equivalence classes of colorings of the 3-torus.  Let $\tilde{c}=(a,b,c,d,e,f,g)$ be a coloring of the 3-torus, we denote by $\cs{\tilde{c}}=(\cs{a},\cs{b},\cs{c},\cs{d},\cs{e},\cs{f},\cs{g})$ the coloring obtained by projection to the graduator $\Zz_2$. The coloring $\cs{\tilde{c}}$ is a $\Zz_2$-coloring. Using the example for group categories, we know that $\cs{\tilde{c}}$ is determined by the triple $(\cs{a},\cs{b},\cs{c})$. Furthermore since $\Zz_2$ is an abelian group, two colorings $\tilde{c}=(a,b,c,d,e,f,g)$ and $\tilde{c}'=(a',b',c',d',e',f')$ are equivalent for the gauge action if and only if $(\cs{a},\cs{b},\cs{c})=(\cs{a'},\cs{b'},\cs{c'})$. Thus the set of equivalence classes of a coloring $\tilde{c}=(a,b,c,d,e,f,g)$ is given by the class of $(\cs{a},\cs{b},\cs{c})$. It follows~:

\begin{align}
HTV_{U_q(\frak{sl}(2))}(\torust,\_)&=\left(\frac{-2r}{(A-A^-1)^2}\sum_{\substack{\tilde{c}=(a,b,c,d,e,f,g,h)\\\cs{a}=i,\cs{b}=j\cs{c}=k}}w_{\tilde{c}} W_{\tilde{c}}\right)_{(i,j,k)\in \Zz_2^3}\, ,\label{HTV:torus}
\end{align}
with $w_{\tilde{c}}=\dim(a)\dim(b)...\dim(h)$ and $W_{\tilde{c}}= \acl a & b & f \\ c& g& d  \acr^2\acl a & c & e \\ b& g& d  \acr^2\acl a & b & f \\ g& c& e  \acr^2$. The homotopy classes are given by the equivalence classes of colorings~:
$$(1,1,1), (1,1,-1), (1,-1,1), (-1,1,1), (-1,1,-1), (-1,-1,1), (1,-1,-1), (-1,-1,-1)\in \Zz^3\, .$$

Let us prove that $HTV_{U_q(\frak{sl}(2))}(S^1\times S^1\times S^1,(-1,1,1))$ and $HTV_{U_q(\frak{sl}(2))}(S^1\times S^1\times S^1,(1,-1,1))$ are equal. The formula (\ref{HTV:torus}) gives~:
\begin{align*}
 HTV_{U_q(\frak{sl}(2))}(\torust,(-1,1,1))&= \sum_{\substack{a,b,c,d,e,f,g\\ \cs{a}=\cs{e}=\cs{f}=\cs{g}=-1\\\cs{b}=\cs{c}=\cs{d}=1}}w_{\tilde{c}}\acl a & b & f\\ c & g &d \acr^2\acl a & c & e\\ b & g &d \acr^2\acl a & b & f\\ g & c &e \acr^2\, ,\\
    HTV_{U_q(\frak{sl}(2))}(\torust,(1,-1,1))&= \sum_{\substack{a,b,c,d,e,f,g\\ \cs{b}=\cs{d}=\cs{f}=\cs{g}=-1\\\cs{a}=\cs{c}=\cs{e}=1}}w_{\tilde{c}}\acl a & b & f\\ c & g &d \acr^2\acl a & c & e\\ b & g &d \acr^2\acl a & b & f\\ g & c &e \acr^2\, .\\
\end{align*}
The 6j-symbols are invariant under the action of the alternated group $\frak{A}_4$ on the 0-simplexes, it gives the following relations (\cite{matveev}, \cite{TV})~:
\begin{equation*}
\acl i & j & k\\ l & m & n \acr\,  = \, \acl j & i & k\\ m & l & n \acr = \acl i & k & j\\ l & n & m \acr \, = \, \acl i& m & n\\ l & j & k \acr \, = \, \acl l & m & k\\ i & j & n \acr\, =\,  \acl l & j & n\\ i & m & k \acr\, ,
\end{equation*}
with $(i,j,k)$, $(i,m,n)$, $(j,l,n)$ and $(k,l,m))$ admissible triples. Using the above relations, one gets~:
\begin{align*}
    HTV_{U_q(\frak{sl}(2))}(\torust,(1,-1,1))&= \sum_{\substack{a,b,c,d,e,f,g\\ \cs{b}=\cs{d}=\cs{f}=\cs{g}=-1\\\cs{a}=\cs{c}=\cs{e}=1}}w_{\tilde{c}}\acl b & a & f\\ g & c &d \acr^2\acl b & c & d\\ a & g &e \acr^2\acl b & a & f\\ c & g & e \acr^2\\
    &=HTV_{U_q(\frak{sl}(2))}(\torust,(-1,1,1))\, .
\end{align*}

Similarly, we prove the following equalities~:
\begin{align*}
  HTV_{U_q(\frak{sl}(2))}(\torust,(1,1,-1))&=HTV_{U_q(\frak{sl}(2))}(\torust,(1,-1,1))\\
&=HTV_{U_q(\frak{sl}(2))}(\torust,(-1,1,1)) \\
\end{align*}
and
\begin{align*}
HTV_{U_q(\frak{sl}(2))}(\torust,(1,-1,-1))&=HTV_{U_q(\frak{sl}(2))}(\torust,(-1,-1,1))\\
&=HTV_{U_q(\frak{sl}(2))}(\torust,(-1,1,-1))\, .
\end{align*}
\subsubsection*{{\bf Table}}

\begin{center}
\begin{tabular}{|c|c|c|}
\hline
r & $TV_{U_q(\frak{sl}(2))}$ & $HTV_{U_q(\frak{sl}(2))}$ \\
\hline
3 & $4$ & $(1/2,1/2,1/2,1/2,1/2,1/2,1/2,1/2)$ \\
\hline
4 & 9 & $(2,1,1,1,1,1,1,1)$ \\
\hline
5 & 16 & $(2,2,2,2,2,2,2,2)$\\
\hline
6 & 25 & $(4,3,3,3,3,3,3,3)$ \\
\hline
%7 & 36& \\
%\hline
\end{tabular}
\end{center}

\subsection{The lens spaces}

Lens spaces $L(p,q)$, with 0<q<p and (p,q)=1, are oriented compact 3-manifolds, which result from identifying on the sphere $S^3=\{(x,y)\in \Cc^2\mid \cs{x}^2+\cs{y}^2=1\}$  the points which belong to the same orbit under the action of the cyclic group $\Zz_p$ defined by $(x,y)\ap (wx,w^qy)$ with $w=exp(2i\pi/p)$.

A singular triangulation of $L(p,q)$ is obtained by gluing together $p$ tetrahedra $(a_i,b_i,c_i,d_i)$, $i=0,...,p-1$ according to the following identification of faces $(i+1$ and $i+q$ are understood modulo p)~:
\begin{eqnarray}
(a_i,b_i,c_i)&=&(a_{i+1},b_{i+1},c_{i+1}) \label{i+1}\\
(a_i,b_i,c_i)&=&(b_{i+q},c_{i+q},d_{i+q}) \label{i+q}
\end{eqnarray}
The identification of (\ref{i+1}) can be realized by embedding the $p$ tetrahedra in Euclidean three-space, leading to a prismatic solid with $p+2$ 0-simplexes $a,b,c_i$, $2p$ external faces, $3p$ external edges and one internal axis $(a,b)$. Then formula (\ref{i+q}) is interpreted as the identification of the surface triangles $(a,c_i,c_{i+1})$ and $(b,c_{i+q},c_{i+1+q})$.

\subsubsection{Group categories}

\subsubsection*{{\bf Colorings}}

Let $G$ be a finite group, $\alpha\in H^3(G,\Bbbk^*)$ and $c$ be a coloring of the singular triangulation of $L(p,q)$ described above. We set $g=c(ab)$, $h_i=c(bc_i)$ and $k_i=c(c_ic_{i+1})$. By definition of the colorings, we have the following relations~:
\begin{align}
c(ac_i)&=c(ab)c(bc_i)=gh_i\label{rel:col1}\\
h_{i+1}&=c(bc_{i+1})=c(bc_i)c(c_ic_{i+1})=h_ik_i \label{rel:col2}
\end{align}

 The identification of the 2-simplexes $(a,c_i,c_{i+1})$ and $(b,c_{i+q},c_{i+1+q})$ gives~:
\begin{eqnarray}
k_i&=k_{i+q} \label{rel:k}\\
c(ac_i)&=h_{i+q}\label{rel:h}
\end{eqnarray}

The relations (\ref{rel:h}) and (\ref{rel:col1}) give : $h_i=h_{i+pq}=c(ac_{i+(p-1)q})=...=g^ph_i$. It implies that $g^p=e$. Since $(p,q)=1$, the relation (\ref{rel:k}) implies that $k_i$ is independent of $i$, we set $k_i=k\in G$. There exists an integer $n$ such that $n$ is the inverse of $q$ modulo $p$. One gets : $g^nh_i=h_{i+nq}=h_{i+1}$. By induction, it follows : $h_i=g^{in}h_0$. We set $h_0=h\in G$. The relation $g^nh_i=h_{i+1}$ compared to (\ref{rel:col2}) gives $k=h_i^{-1}g^nh_i=h^{-1}g^nh$. Conversely, the data $g,h\in G$ with $g^p=e$ determines a coloring of the singular triangulation of $L(p,q)$ through the formulas~:
\begin{align}
  g_i&=g\, , \label{collens:g}\\
  h_i &= g^{in}h\, ,  \label{collens:h} \\
  k_i &= h^{-1}g^{n}h\, . \label{collens:k}
\end{align}

\medskip

{\bf Gauge actions}

\medskip

The data $(g,h)\in G$ with $g^p=e$ defines a coloring of the above singular triangulation of $L(p,q)$. All the gauge actions on the colorings are on the form~:

\begin{eqnarray}
\delta.g&= \delta(a).g.\delta(a)^{-1}\, ,\\
\delta.h_i&= \delta(a).h_i.\delta(c_i)\, ,
\end{eqnarray}

with $\delta~:~T^0 \fd G$. Two colorings $(g,h)$ and $(g',h')$ are equivalent under the gauge actions if and only if there exist $x,y\in G$ such that~:~$g'=xgx^{-1}$ and $h'=xhy^{-1}$. We can notice that if $G$ is an abelian group then the equivalence class of the coloring $(g,h)$ is the set $\{(g,h)\}_{h\in G}$.

\medskip

{\bf Computation for $\Zz_N$.}

\medskip

In this case, the Turaev-Viro invariant is the Dijkgraaf-Witten invariant \cite{TVDW}~:

\begin{equation}
TV_{\Zz_N,\alpha_N}(L(p,q))=\frac{1}{N^2}\sum_{g,h\in \Zz_N, g^p=e}\prod_{i=0}^{p-1}\alpha_N(g,g^{in}h,h^{-1}g^nh)\, ,
\end{equation}
with $\alpha_N$ (\ref{cocycle}) the 3-cocycle which generates $H^3(\Zz_N,U(1))$.

\medskip

{\bf First case $p\nmid N$}

\medskip

In this case the set of colorings is $\{(1,h)\}_{h\in \Zz_N}$ and there is only one equivalent class of coloring. It follows that : $TV_{\Zz_N,\alpha_N}(L(p,q))=HTV_{\Zz_N,\alpha_N}(L(p,q),0)$, with $0\in [L(p,q),B\Zz_N]$ the trivial homotopy class. We obtain~:
$$
TV_{\Zz_N,\alpha_N}(L(p,q))=\frac{1}{N^2}\sum_{h\in \Zz_N}\prod_{i=0}^{p-1}\alpha_N(1,h,1)=\frac{1}{N}\, .
$$

\medskip

{\bf Second case $p\mid N$}

\medskip

In this case the number of homotopy classes is $\#\{g\in \Zz_N\mid g^p=1\}$. The invariant $HTV_{\Zz_N,\alpha_N}$ is :
\begin{equation}
HTV_{\Zz_N,\alpha_N}(L(p,q),-)=(\frac{1}{N},\frac{1}{N^2}\sum_{h\in  \Zz_N}\prod_{i=0}^{p-1}\alpha(g_1,g_1^{in}h,g_1^n),...,\frac{1}{N^2}\sum_{h\in  \Zz_N}\prod_{i=0}^{p-1}\alpha(g_k,g_k^{in}h,g_k^n))\, ,
\end{equation}
with $g_1,...,g_k\in \Zz_N$ such that $g_i^p=1$ for all $1\leq i \leq k$. We refer to the Section \ref{sec:table} for some values.

\subsubsection{$U_q(\mathfrak{sl}_2)$ with $q$ root of unity}

\subsubsection*{{\bf Colorings}}

Let us recall that for the previously singular triangulation of $L(p,q)$ we denote by $(a,b,c_i,c_{i+1})$, with $0\leq i\leq p-1$, the 3-simplexes. Let $c$ be a coloring of this singular triangulation, we set~:
\begin{align*}
  c(ab)&= X\, ,\\
  c(bc_i)&=Y_i\, ,\\
  c(c_ic_{i+1})&= Z_i\, ,\\
  c(ac_i)&=K_i\, . \\
\end{align*}
The identification of the 2-simplexes $(a,c_i,c_{i+1})$ and $(b,c_{i+q},c_{i+q+1})$ gives the following relations~:
\begin{align*}
K_i&= Y_{i+q}\, ,\\
Z_i&= Z_{i+q}\, ,
\end{align*}
for all $0\leq i \leq p-1$. Since $(p,q)=1$, there exists an integer $n$ such that : $nq=1$ mod $p$. It follows : $Z_{i+1}=Z_{i+nq}=Z_i$. We set $Z_i=Z_0=Z$. Thus a coloring of $L(p,q)$ is determined by the colors of the edges : $(ab)$, $(c_ic_{i+1})$ and $(bc_i)$. From now on, a coloring $c$ of $L(p,q)$ will be denoted by $(c(ab),c(c_ic_{i+1}),c(bc_i))$.

\subsubsection{{\bf Gauge actions}}

For every scalar object $X$, $|X|$ denotes the image of the scalar object $X$ by the projection map $|?|:\Lambda_{U_q(\mathfrak{sl}_2)}\fd \Zz_2$.

Let $n$ be an integer such that $nq=1$ mod $p$. For every coloring $c=(X,Z,Y_i)$, $(|X|,|Z|,|Y_i|)$ is a $\Zz_2$-coloring  of $L(p,q)$. From the computation int he case of group categories, one gets~:
\begin{align*}
|Y_i|&=|X|^{in}h\, ,\\
|Z|&=|X|^n\, ,
\end{align*}
with $h\in \Zz_2$. Using the case of group categories, we know that two colorings $(|X|,|Z|,|Y_i|)$ and $(|X'|,|Z'|,|Y_i'|)$ are equivalent if and only if $|X|=|X'|$. It results that two colorings $c=(X,Z,Y_i)$ and $c'=(X',Z',Y_i')$ are equivalent if and only if $|X|=|X'|$. Thus the parity of $X$ describes the equivalent classes. There exists at most two homotopy classes, the trivial homotopy class $0\in[L(p,q),B\Zz_2]$ corresponds to the equivalence class of the coloring $(X,Z,Y_i)$ with $\cs{X}=1$. Then the Turaev-Viro invariant can be written in the following way~:
\begin{align}
TV_{U_q(\mathfrak{sl}_2)}(L(p,q))&=\Delta_{U_q(\mathfrak{sl}_2)}^{-2}\sum_{c=(X,Z,Y_i)}w_cW_c \nonumber\\
&= \Delta_{U_q(\mathfrak{sl}_2)}^{-2}\left(\sum_{\substack{c=(X,Z,Y_i)\\ \cs{X}=1}}w_cW_c+\sum_{\substack{c=(X,Z,Y_i)\\\cs{X}=-1}}w_cW_c\right)\, .
\end{align}
We denote by $HTV_0(L(p,q))$ (resp. $HTV_1(L(p,q))$) the state sum $\dis{\Delta_{U_q(\mathfrak{sl}_2)}^{-2}\sum_{\substack{c=(X,Z,Y_i)\\ \cs{X}=1}}w_cW_c}$ (resp. $\dis{\Delta_{U_q(\mathfrak{sl}_2)}^{-2}\sum_{\substack{c=(X,Z,Y_i)\\ \cs{X}=-1,\cs{X}^p=1}}w_cW_c}$). The state sum $HTV_0$ is the homotopical Turaev-Viro invariant for the trivial homotopy class, and $HTV_1$ is the homotopical Turaev-Viro obtained for the other homotopy class.

\medskip

{\bf If $p$ is odd.}

\medskip

In this case the set of colorings is $(X,Z,Y_i)_{\cs{X}=1,Z,Y_i}$, it follows that there is only one homotopy class given by the equivalence class of the coloring $(0,Z,Y_i)$. We obtain : $TV(L(p,q))=HTV_0(L(p,q))$ and $HTV_1(L(p,q)=0$.

\subsubsection{The case $r=3$}

The set of irreducible scalar objects (up to isomorphisms) consists of two elements $0$ and $1$. Up to permutation there are only two admissible (unordered) triples : $(0,0,0)$ and $(0,1,1)$.

Let $A$ be a 6th root of unity with $A^2=q$ a 3rd root of unity. It follows that~: $q^2+q+1=0$. Set $\epsilon=A+A^{-1}\not = 0$. We obtain :
\begin{align*}
\epsilon^2 &= q+2+q^{-1}\\
&= q+1-q\\
&=1
\end{align*}
Thus $\epsilon =1$ if the real part of $A$ is positive and $\epsilon=-1$ if the real part of $A$ is negative. We have~:%
\begin{align*}
  \dim(0)&=1\, ,\\
  \dim(1)&=-\epsilon\, , \\
  \Delta_{U_q(\mathfrak{sl}_2)}&= 2\, .
\end{align*}
Each admissible 6-tuple may be transformed by the action of the alternating $\mathfrak{A}_4\subset \mathfrak{S}_4$ into one of the three 6-tuples : $(0,0,0,0,0,0)$, $(1,1,0,1,1,0)$ and $(0,1,1,1,0,0)$. We obtain the following 6j-symbols~:
\begin{align*}
  \left|\begin{array}{ccc}
  0 & 0 & 0\\
  0 & 0 & 0
  \end{array}\right| &= 0\, ,\\
  \left|\begin{array}{ccc}
  1 & 1 & 0\\
  1 & 1 & 0
  \end{array}\right| &= -\epsilon\,   ,\\
  \left|\begin{array}{ccc}
  0 & 1 & 1\\
  1 & 0 & 0
  \end{array}\right| &=\left\{\begin{array}{cc}\imath & \mbox{if $\epsilon = 1$}\, ,\\
  1 & \mbox{if $\epsilon=-1$}\, .\end{array}\right. \\
\end{align*}

The homotopical Turaev-Viro invariant is~:

\begin{equation*}
  (HTV_0(L(p,q)),  HTV_1(L(p,q)))=\left\{\begin{array}{cc}(1/2,0) & \mbox{if $p$ is odd}\\ (1/2,(-\epsilon)^{p/2}/2) & \mbox{if $p$ is even}\end{array}\right.
\end{equation*}

\section{Splitting of the Turaev-Viro TQFT}\label{sec:splitting}

In this section, we will build a splitting of the Turaev-Viro TQFT. To obtain this splitting, we will use the homotopical Turaev-Viro invariant (Theorem \ref{thm:inv}). Throughout this section, the category $\C$ is a spherical category with an invertible dimension in $\Bbbk$.

\subsection{The Turaev-Viro TQFT}

Let $\Sigma$ and $\Sigma'$ be two oriented closed surfaces, a \emph{cobordism from $\Sigma$ to $\Sigma'$} is a 3-manifold whose boundary is the disjoint union : $\overline{\Sigma}\coprod \Sigma'$. Let $M$ and $M'$ be two cobordisms from $\Sigma$ to $\Sigma'$, $M$ and $M'$ are equivalents if there exists an isomorphism between $M$ and $M'$ such that it preserves the orientation and its restriction to the boundary is the identity.

The \emph{cobordism category} is the category where objects are closed and oriented surfaces and morphisms are equivalent classes of cobordisms. The cobordism category is denoted by $Cob_{1+2}$. The disjoint union and the empty manifold $\emptyset$ define a strict monoidal structure on $Cob_{1+2}$.

A \emph{TQFT} is a monoidal functor from the cobordism category  to the category of finite dimensional vector spaces.

We will recall the construction  of the Turaev-Viro TQFT. Let $\Sigma$ be an oriented closed surface and $T$ be a triangulation of $\Sigma$. We associate to the pair $(\Sigma,T)$ a vector space $\dis{V_{\C}(\Sigma,T)=\bigoplus_{c\in Col(T)}\bigotimes_{f\in T_0^2}V(f,c)}$, where $V(f,c)=\ho{\Bbbi}{c(01)\pt c(12)\pt c(20)}$ for every $f=(012)$. The vector space $V(f,c)$ does not depend on the choice of a numbering which respects the orientation. The vector space $\dis{\bigotimes_{f\in T_0^2}V(f,c)}$ is dual to $\dis{\bigotimes_{f\in T_0^2}V(\ov{f},c)}$, the duality pairing $\dis{\Omega_{\C,c} : \bigotimes_{f\in T_0^2}V(f,c)\pt_{\Bbbk} \bigotimes_{f\in T_0^2}V(\ov{f},c)}$ is induced by the non degenerate bilinear form (\ref{bilinear})~:
\begin{align*}
\omega_{\C} : Hom_{\C}(\Bbbi,X\pt Y \pt Z)\pt_{\Bbbk} Hom_{\C}(\Bbbi,\xd{Z}\pt \xd{Y} \pt \xd{Z})&\fd \Bbbk^* \\
f\pt g & \ap tr(\xd{f}g)\, ,\nonumber
\end{align*}
for every objects $X$, $Y$ and $Z$. The duality pairings corresponding to all colorings of $T$ determine a bilinear form $\Omega_{\C} : V_{\C}(\Sigma,T)\pt_{\Bbbk} V_{\C}(\ov{\Sigma},T)\fd \Bbbk$ by the formula~:
\begin{equation}\label{bilenargen}
\Omega_{\C}(\bigoplus_{c\in Col(T)}x_c,\bigoplus_{c\in Col(T)}y_c)=\sum_{c\in Col(T)}\omega_{\C,c}(x_c,y_c)\, ,
\end{equation}
where $x_c\in \prod_{f\in T_0^2}V(f,c)$ and $y_c\in \prod_{f\in T_0^2}V(\ov{f},c)$ for all coloring $c\in Col(T)$. The bilinear form (\ref{bilenargen}) is non degenerate and symmetric.

Let $\Sigma$ (resp. $\Sigma'$) be an oriented surface endowed with a triangulation $T$ (resp. $T'$) and $M$ be a cobordism from $\Sigma$ to $\Sigma'$, for every colorings $c\in Col(T)$ and $c'\in Col(T')$ we have the following vector : $TV_{\C}(M,c,c')\in V_{\C}(\ov{\Sigma},T,c)\pt V_{\C}(\Sigma',T',c')\cong V_{\C}(\Sigma,T,c)^*\pt V_{\C}(\Sigma',T',c')$. The vector spaces $V_{\C}(\Sigma,T,c)$ and $V_{\C}(\Sigma',T',c')$ are finite dimensional vector spaces, thus we can build the following linear map :
$$\ov{TV_{\C}}(M)_{c,c'} : V_{\C}(\Sigma,T,c)\fd V_{\C}(\Sigma',T',c')\, ,$$
thus the matrix $\bigg( \ov{TV_{\C}}(M)_{c,c'}\bigg)_{c\in Col(T),c'\in Col(T')}$ defines a  linear map :
$$[M]= \bigg( \ov{TV_{\C}}(M)_{c,c'}\bigg)_{c\in Col(T),c'\in Col(T')}: V_{\C}(\Sigma,T)\fd V_{\C}(\Sigma',T')\, .$$
By construction of the Turaev-Viro invariant (Theorem 1.8 \cite{Tu}), we have the following relation : let $\Sigma$, $\Sigma'$ and $\Sigma''$ be closed surfaces endowed with the triangulations $T$, $T'$ and $T''$,  for every cobordisms $M : (\Sigma,T) \fd (\Sigma',T')$ and $M' : (\Sigma',T') \fd (\Sigma'',T'')$ we have : $TV_{\C}(M'\cup_{\Sigma'}M)=\textrm{contr}_{\Sigma}(TV_{\C}(M')\pt TV_{\C}(M))$, where $\textrm{contr}_{\Sigma}$ is the contraction~:
\begin{equation}\label{contractionTQFT}
V_{\C}(\ov{\Sigma},T)\pt_{\Bbbk}V_{\C}(\Sigma',T')\pt_{\Bbbk}V_{\C}(\ov{\Sigma'},T')\pt_{\Bbbk}V_{\C}(\Sigma'',T'')\fd V_{\C}(\ov{\Sigma},T)\pt_{\Bbbk}V_{\C}(\Sigma'',T'')
\end{equation}
induced by the form $\Omega_{\C}$ in $V_{\C}(\Sigma',T')$. It follows that~: $[M'\cup_{\Sigma'}M]=[M']\circ[M]$. Furthermore the map $[\Sigma\times I] : V_{\C}(\Sigma,T) \fd V_{\C}(\Sigma,T)$ is an idempotent denoted by $p_{\Sigma,T}$. Set $\m{V}_{\C}(\Sigma,T)=\im(p_{\Sigma,T})$ and for every cobordism $M : \Sigma\fd \Sigma'$ we denote by $\m{V}_{\C}(M)=[M]_{\im(p_{\Sigma,T})}$ the restriction of $[M]$ to $\im(p_{\Sigma,T})$. The vector space $\m{V}_{\C}(\Sigma,T)$ is independent on the choice of the triangulation $T$. Indeed for every triangulations $T$ and $T'$ of $\Sigma$, the equivalence class of the cobordism $\Sigma\times I$, where the surface $\Sigma\times \{0\}$ is endowed with the triangulation $T$ and the surface $\Sigma\times \{1\}$ is endowed with the triangulation $T'$, is an isomorphism; the inverse  is the cobordism $\Sigma\times I$ where $\Sigma\times \{0\}$ (resp. $\Sigma\times \{1\}$) is endowed with the triangulation $T'$ (resp. $T$). Thus the linear map defined by this cobordism is an isomorphism between $\m{V}_{\C}(\Sigma,T)$ and $\m{V}_{\C}(\Sigma,T')$. From now on we will denote by $\m{V}_{\C}$ the Turaev-Viro TQFT, for every closed surface $\Sigma$ we denote by $\m{V}_{\C}(\Sigma)$ the vector space associated to $\Sigma$ and for every cobordism $M$ we denote by $\m{V}_{\C}(M)$ the linear map associated to $M$.

\subsection{The splitting of the Turaev-Viro TQFT}

From now on, for every homotopy classes $x\in [\Sigma,B\grad]$ and $x'\in [\Sigma',B\grad]$ we denote by $[M,B\grad]_{(\Sigma,x),(\Sigma',x')}$ the set of homotopy classes of $[M,B\grad]$ such that the homotopy class of the restriction to $\Sigma$ (resp. $\Sigma'$) is $x$ (resp. $x'$).

For every oriented surface $\Sigma$ endowed with a triangulation $T$, we can decompose the vector space $V_{\C}(\Sigma,T)$ in the following way :
$$V_{\C}(\Sigma,T)=\bigoplus_{x\in [\Sigma,B\grad]}\bigoplus_{c\in Col_x(T)}V_{\C}(\Sigma,T,c)=\bigoplus_{x\in [\Sigma,B\grad]}V_{\C}(\Sigma,T,x) \, ,$$
where $V_{\C}(\Sigma,T,x)$ is the vector space $\dis{\bigoplus_{c\in Col_x(T)}V_{\C}(\Sigma,T,c)}$.

Let $M$ be a cobordism from $(\Sigma,T)$ to $(\Sigma',T')$, $c$ be a coloring of $T$ and $c'$ be a coloring of $T'$. For every homotopy class $y\in [M,B\grad]_{(\Sigma,x_c),(\Sigma',x_{c'})}$, we know that $HTV_{\C}(M,y,c,c')$ is a vector in $V_{\C}(\Sigma,T,c)^*\pt V_{\C}(\Sigma',T',c')$. This vector defines a linear map :
$$
\ov{HTV_{\C}}(M,y,c,c') : V_{\C}(\Sigma,T,c) \fd V_{\C}(\Sigma,T',c')\, .
$$

Let $x\in [\Sigma,B\grad]$ and $x'\in [\Sigma,B\grad]$, for every $y\in [M,B\grad]_{(\Sigma,x),(\Sigma',x')}$ the matrix $\bigg(\ov{HTV_{\C}}(M,y,c,c')\bigg)_{c\in Col_x(T), c'\in Col_x(T')}$ defines a linear map :

$$
\bigg(\ov{HTV_{\C}}(M,y,c,c')\bigg)_{c\in Col_x(T), c'\in Col_x(T')} : V_{\C}(\Sigma,T,x) \fd V_{\C}(\Sigma',T',x')\, .
$$
This map is denoted by $\widetilde{HTV_{\C}}(M,y)_{x,x'}$.

\subsection*{Compositions}

Let $M$ be a cobordism from $(\Sigma,T)$ to $(\Sigma',T')$, $M'$ be a cobordism from $(\Sigma',T')$ to $(\Sigma'',T'')$, $x\in [\Sigma,B\grad]$ and $x''\in [\Sigma'',B\grad]$. By construction of the homotopical Turaev-Viro invariant for every $y\in [M,B\grad]_{(\Sigma,x),(\Sigma',x')}$, $y'\in [M',B\grad]_{(\Sigma',x'),(\Sigma'',x'')}$, $c\in Col_x(T)$ and $c''\in Col_{x"}(T")$, we have :
$$HTV_{\C}(M'\cup_{\Sigma'} M,y\cup y',c,c'')=\textrm{contr}_{\Sigma',x'}(HTV_{\C}(M,y,c,c')\pt HTV_{\C}(M',y',c',c''))\, ,$$
where $\textrm{contr}_{\Sigma',x'}$ is the contraction~:
$$
V_{\C}(\ov{\Sigma},T,x)\pt_{\Bbbk}V_{\C}(\Sigma',T',x')\pt_{\Bbbk}V_{\C}(\ov{\Sigma'},T',x')\pt_{\Bbbk}V_{\C}(\Sigma'',T'',x'')\fd V_{\C}(\ov{\Sigma},T,x)\pt_{\Bbbk}V_{\C}(\Sigma'',T'',x'')
$$
induced by the form $\Omega_{\C}$ in $V_{\C}(\Sigma',T',x')$ and with $y\cup y'(x) = \left\{\begin{array}{cc}y(x) & \mbox{if $x\in M$}\, ,\\ y'(x) & \mbox{if $x\in M'$}\, . \end{array}\right.$. It follows~:
$$
\sum_{c'\in Col_{x'}(T')}\ov{HTV_{\C}}(M',c',c'',y')\circ \ov{HTV_{\C}}(M,c,c',y)l=\ov{HTV_{\C}}(M'\cup_{\Sigma}M,c,c''y\cup y')\, .
$$

The composition is well defined, let us show that the morphism $(\Sigma\times I)_{T,T} : (\Sigma,T) \fd (\Sigma,T)$ defines an idempotent of $V_{\C}(\Sigma,T,x)$.

\subsection*{Idempotents}

Let $\Sigma$ be a surface, the inclusion $\Sigma \hookrightarrow \Sigma\times I$ is a deformation retract, thus there exists a unique homotopy class $y\in [\Sigma\times I,B\grad]$ such that the homotopy class of the restriction to $\Sigma\times \{0\}$ is $x$. More precisely, $y$ is the homotopy class of the following map :
\begin{align*}
\Sigma \times I &\fd B\grad \\
(z,t) & \ap x(z).
\end{align*}

We denote by $1_x$ this homotopy class. Assume that there exists an homotopy class $y\in[\Sigma\times I,B\grad]_{(\Sigma,x),(\Sigma,x')}$ then there exists a map : $Y : \Sigma\times I \fd B\grad$ such that $Y_{\Sigma\times \{0\}}$ (resp. $y_{\Sigma\times \{1\}}$) is homotopic to $x$ (resp. $x'$). It follows that the linear map $\widetilde{HTV_{\C}}(M\times I,y)_{x,x'}$ is defined if and only if the homotopy classes $x$ and $x'$ are the same. When this linear map is defined then $1_x$ is the unique homotopy class of $[\Sigma\times I]_{(\Sigma,x),(\Sigma',x')}$.  We denoted by $p_{\Sigma,T,x}$ the linear map $\widetilde{HTV_{\C}}(\Sigma\times I,1_x)_{x,x}$. By definition of the composition, this endomorphism is an idempotent.

\begin{lem}\label{lem:idemspli}
For every surface $\Sigma$ endowed with a triangulation $T$, we have :
\begin{equation*}
p_{\Sigma,T}=\bigoplus_{x\in [\Sigma,B\grad]}p_{\Sigma,T,x}.
\end{equation*}
\end{lem}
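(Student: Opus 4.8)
The plan is to unwind both sides directly from the definitions and compare them on each block $V_{\C}(\Sigma,T,x)$. Recall that $p_{\Sigma,T}=[\Sigma\times I]$ is the idempotent whose matrix entries are $\ov{TV_{\C}}(\Sigma\times I)_{c,c'}$ for $c,c'\in Col(T)$, while $p_{\Sigma,T,x}=\widetilde{HTV_{\C}}(\Sigma\times I,1_x)_{x,x}$ has matrix entries $\ov{HTV_{\C}}(\Sigma\times I,1_x,c,c')$ for $c,c'\in Col_x(T)$. So the whole statement amounts to two observations: (1) the matrix of $p_{\Sigma,T}$ is block-diagonal with respect to the decomposition $Col(T)=\coprod_{x\in[\Sigma,B\grad]}Col_x(T)$, and (2) on the block indexed by $x$ it agrees with the matrix of $p_{\Sigma,T,x}$.

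First I would establish (2), which is essentially the splitting already proved. Apply Theorem \ref{thm:inv} to the cobordism $M=\Sigma\times I$ with boundary $\ov{\Sigma}\coprod\Sigma$: for a fixed pair of boundary colorings $c\in Col_x(T)$, $c'\in Col_{x'}(T')$ (here $T=T'$), we get
$$
TV_{\C}(\Sigma\times I,c,c')=\sum_{y\in[\Sigma\times I,B\grad]_{(\Sigma,x_c),(\Sigma,x_{c'})}}HTV_{\C}(\Sigma\times I,y,c,c').
$$
Now invoke the idempotent discussion preceding the lemma: since $\Sigma\hookrightarrow\Sigma\times I$ is a deformation retract, the set $[\Sigma\times I,B\grad]_{(\Sigma,x),(\Sigma,x')}$ is empty unless $x=x'$, in which case it is the single class $1_x$. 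Hence for $c,c'\in Col_x(T)$ the sum collapses to the single term $HTV_{\C}(\Sigma\times I,1_x,c,c')$, giving $\ov{TV_{\C}}(\Sigma\times I)_{c,c'}=\ov{HTV_{\C}}(\Sigma\times I,1_x,c,c')$; and for $c\in Col_x(T)$, $c'\in Col_{x'}(T)$ with $x\neq x'$ the sum is empty, so $\ov{TV_{\C}}(\Sigma\times I)_{c,c'}=0$. This is precisely claim (1) together with claim (2), and assembling the block-diagonal matrix yields $p_{\Sigma,T}=\bigoplus_{x\in[\Sigma,B\grad]}p_{\Sigma,T,x}$ as endomorphisms of $V_{\C}(\Sigma,T)=\bigoplus_x V_{\C}(\Sigma,T,x)$.

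The only genuinely delicate point — and the one I would spell out carefully — is the homotopy-theoretic input that $[\Sigma\times I,B\grad]_{(\Sigma,x),(\Sigma,x')}$ is $\{1_x\}$ when $x=x'$ and empty otherwise; everything else is bookkeeping about block decompositions of matrices of linear maps. This fact follows from the deformation retraction $\Sigma\times I\simeq\Sigma$: restriction to $\Sigma\times\{0\}$ is a bijection $[\Sigma\times I,B\grad]\to[\Sigma,B\grad]$, and the class $y$ with $y|_{\Sigma\times\{0\}}=x$ then automatically satisfies $y|_{\Sigma\times\{1\}}=x$ as well, forcing $x'=x$ and $y=1_x$. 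I would note that on the combinatorial side this matches the bijections (\ref{isobord}) and Lemma \ref{lem:invjau}, so that $Col_{\grad,c_0}(T\times I)/\m{G}$ for $T\times I$ a triangulation of $\Sigma\times I$ restricting to $T$ on both ends has the same one-point-fiber structure; this is the statement needed to ensure $HTV_{\C}(\Sigma\times I,1_x,c,c')$ really is defined and that no other homotopy class contributes.
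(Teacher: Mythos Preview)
Your proof is correct and follows essentially the same route as the paper: apply the splitting of Theorem \ref{thm:inv} to $M=\Sigma\times I$, use the deformation-retract argument to see that $[\Sigma\times I,B\grad]_{(\Sigma,x),(\Sigma,x')}$ is empty for $x\neq x'$ and equals $\{1_x\}$ for $x=x'$, and read off the block-diagonal form. The paper's proof is the same argument, only more tersely stated and without your additional remark about the combinatorial counterpart via (\ref{isobord}) and Lemma \ref{lem:invjau}.
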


\pf
For every 3-manifold $M$ with boundary $\Sigma$, for every triangulation $T$ of $\Sigma$ and for every coloring $c\in Col(T)$, we have : $\dis{TV_{\C}(M,c)=\sum_{x\in [M,B\grad]_{\Sigma,x_c}}HTV_{\C}(M,x,c)}$. Thus if $M=\Sigma\times I$, then $\dis{TV_{\C}(\Sigma\times I,c,c')=\sum_{y\in[\Sigma\times I,B\grad]_{(\Sigma,x_c),(\Sigma,x_{c'})}}HTV_{\C}(\Sigma\times I,y,c,c')}$. Previously we have shown that if the homotopy classes $x_c$ and $x_{c'}$ are different then $[\Sigma\times I,B\grad]_{(\Sigma,x_c),(\Sigma,x_{c'})}$ is the empty set and if the homotopy classes $x_c$ and $x_{c'}$ are the same then $[\Sigma\times I,B\grad]_{(\Sigma,x_c),(\Sigma,x_{c'})}=\{1_{x_c}\}$. Thus if $c,c'\in Col_x(T)$ then $TV_{\C}(\Sigma\times I,c,c')=HTV_{\C}(\Sigma\times I,1_{x},c,c')$. Furthermore if $c\in Col_x(T)$ and $c'\in Col_{x'}(T)$ with $x\not=x'$ then $TV_{\C}(\Sigma\times I,c,c')=0$. It follows that $\dis{p_{\Sigma,T}=\bigoplus_{x\in [\Sigma, B\grad]}p_{\Sigma,T,x}}$.

\qed

For every closed surface $\Sigma$ endowed with a triangulation $T$, we set : $\m{W}_{\C}(\Sigma,T,x)=\im(p_{\Sigma,T,x})\,.$ Let $M$ be a cobordism from $(\Sigma,T)$ to $(\Sigma',T')$, for every $x\in [\Sigma,B\grad]$, $x'\in [\Sigma',B\grad]$ and $y\in [M,B\grad]_{(\Sigma,x),(\Sigma',x')}$, we denote by $\m{W}_{\C}(M,y)_{x,x'}$ the restriction of $\widetilde{HTV_{\C}}(M,y)_{x,x'}$ to the vector spaces $\m{W}_{\C}(\Sigma,T,x)$ and $\m{W}_{\C}(\Sigma',T',x')$. By definition of the composition, $\m{W}_{\C}(M,y)_{x,x'}$ is a linear map from $\m{W}_{\C}(\Sigma,T,x)$ to $\m{W}_{\C}(\Sigma',T',x')$.

\medskip

Let us show that $\m{W}_{\C}(\Sigma,T,x)$ doesn't depend on the choice of the triangulation. For every closed surface $\Sigma$ and for every triangulations $T$ and $T'$ of $\Sigma$, the linear map $\m{W}_{\C}(\Sigma\times I,1_x)_{x,x} : \m{W}_{\C}(\Sigma,T,x)\fd \m{W}_{\C}(\Sigma,T',x)$ is an isomorphism. Thus the space $\m{W}_{\C}(\Sigma,T,x)$ doesn't depend on the choice of the triangulation $T$, from now on we denote this vector space by $\m{W}_{\C}(\Sigma,x)$. Notice that if $T=T'$ then $\m{W}_{\C}(\Sigma\times I,1_x)_{x,x}=\id_{\m{W}_{\C}(\Sigma,T,x)}$.
\begin{theorem}\label{thm:decomp}
Let $\C$ be a spherical category. For every closed and oriented surface $\Sigma$, we have the following decomposition of the Turaev-Viro TQFT $\m{V}_{\C}$ : \begin{equation}\label{scindage:ev}
\m{V}_{\C}(\Sigma)=\bigoplus_{x\in [\Sigma,B\grad]}\m{W}_{\C}(\Sigma,x).
\end{equation}

For every cobordism $M : \Sigma_0 \fd \Sigma_1$ and for every $x_0\in [\Sigma_0,B\grad]$, $x_1\in [\Sigma_1,B\grad]$, we denote by $\m{V}_{\C}(M)_{x_0,x_1}$ the following restriction of the map $\m{V}_{\C}(M)$~:
$$
\xymatrix{
\m{V}_{\C}(\Sigma_0) \ar[r]^{\m{V}_{\C}(M)} & \m{V}_{\C}(\Sigma_1) \\
\m{V}_{\C}(\Sigma_0,x_0) \ar@{^{(}->}[u]  \ar[r]_{\m{V}_{\C}(M)_{x_0,x_1}} & \m{V}_{\C}(\Sigma_1,x_1) \ar@{^{(}->}[u] \, .
}
$$
We have the following splitting~:
\begin{equation}
\m{V}_{\C}(M)_{x_0,x_1}= \bigoplus_{y\in [M,B\grad]_{(\Sigma_0,x_0),(\Sigma_1,x_1)}}\m{W}_{\C}(M,y)_{x_0,x_1},\label{scindage:mo}
\end{equation}
\end{theorem}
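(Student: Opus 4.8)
The plan is to deduce both statements formally from Lemma \ref{lem:idemspli} together with a relative version of Theorem \ref{thm:inv}, treating the vector-space decomposition first and the cobordism maps afterwards.

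For the splitting (\ref{scindage:ev}), fix a triangulation $T$ of $\Sigma$. By construction $\m{V}_{\C}(\Sigma,T)=\im(p_{\Sigma,T})$, and Lemma \ref{lem:idemspli} asserts that $p_{\Sigma,T}=\bigoplus_{x\in[\Sigma,B\grad]}p_{\Sigma,T,x}$ with respect to the decomposition $V_{\C}(\Sigma,T)=\bigoplus_x V_{\C}(\Sigma,T,x)$. Since the image of a block-diagonal idempotent is the direct sum of the images of its diagonal blocks, $\im(p_{\Sigma,T})=\bigoplus_x\im(p_{\Sigma,T,x})=\bigoplus_x\m{W}_{\C}(\Sigma,T,x)$. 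As already observed, $\m{W}_{\C}(\Sigma\times I,1_x)$ carries $\m{W}_{\C}(\Sigma,T,x)$ isomorphically onto $\m{W}_{\C}(\Sigma,T',x)$ for any two triangulations, so both sides are independent of $T$ and we may drop it, which gives (\ref{scindage:ev}).

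For the splitting (\ref{scindage:mo}) of the cobordism map, I would first extend Theorem \ref{thm:inv} to a cobordism $M:(\Sigma_0,T_0)\fd(\Sigma_1,T_1)$ with fixed boundary colorings $c\in Col(T_0)$ and $c'\in Col(T_1)$: the argument is the same as in the closed case, namely Pachner-move invariance via Lemma \ref{lem:invjau}, and yields $TV_{\C}(M,c,c')=\sum_{y\in[M,B\grad]_{(\Sigma_0,x_c),(\Sigma_1,x_{c'})}}HTV_{\C}(M,y,c,c')$. Assembling these identities into matrices indexed by $c\in Col_{x_0}(T_0)$ and $c'\in Col_{x_1}(T_1)$ shows that the $(x_0,x_1)$-block of $[M]$, relative to $V_{\C}(\Sigma_i,T_i)=\bigoplus_{x_i}V_{\C}(\Sigma_i,T_i,x_i)$, equals $\sum_y\widetilde{HTV_{\C}}(M,y)_{x_0,x_1}$, the sum being over $y\in[M,B\grad]_{(\Sigma_0,x_0),(\Sigma_1,x_1)}$. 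It then remains to pass to the idempotent images: gluing product collars $\Sigma_i\times I$ to $M$ does not change the homotopy class of a map $M\fd B\grad$, so $1_{x_0}\cup y\cup 1_{x_1}$ represents $y$ again, and the composition formula for $HTV_{\C}$ gives $p_{\Sigma_1,T_1,x_1}\circ\widetilde{HTV_{\C}}(M,y)_{x_0,x_1}\circ p_{\Sigma_0,T_0,x_0}=\widetilde{HTV_{\C}}(M,y)_{x_0,x_1}$. Hence $\widetilde{HTV_{\C}}(M,y)_{x_0,x_1}$ maps $\m{W}_{\C}(\Sigma_0,T_0,x_0)$ into $\m{W}_{\C}(\Sigma_1,T_1,x_1)$ with restriction $\m{W}_{\C}(M,y)_{x_0,x_1}$, and restricting the block identity above to the images of the $p_{\Sigma_i,T_i,x_i}$ (legitimate since each $p_{\Sigma_i,T_i}$ is block-diagonal) produces $\m{V}_{\C}(M)_{x_0,x_1}=\sum_y\m{W}_{\C}(M,y)_{x_0,x_1}$, which is (\ref{scindage:mo}); triangulation independence follows once more by conjugating with the cylinder isomorphisms $\m{W}_{\C}(\Sigma_i\times I,1_{x_i})$.

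The matrix bookkeeping and the compatibility of the various contractions with the block decomposition are routine. The point that needs care is precisely the last one: confirming that the decomposition $V_{\C}(\Sigma,T)=\bigoplus_x V_{\C}(\Sigma,T,x)$ is simultaneously respected by every idempotent $p_{\Sigma,T}$ and intertwined by every cobordism map as claimed, so that $\widetilde{HTV_{\C}}(M,y)_{x_0,x_1}$ genuinely restricts to the blocks $\m{W}_{\C}(\Sigma_i,x_i)$. This rests on the composition property of $HTV_{\C}$ and on the elementary but indispensable homotopy fact $1_{x_0}\cup y\simeq y\simeq y\cup 1_{x_1}$; once these are in hand the theorem follows by diagram-chasing.
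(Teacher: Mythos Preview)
Your argument is correct and follows essentially the same route as the paper: both parts rest on Lemma \ref{lem:idemspli} for the vector-space decomposition and on the coloring-by-coloring splitting $\ov{TV_{\C}}(M)_{c_0,c_1}=\sum_y\ov{HTV_{\C}}(M,y)_{c_0,c_1}$ for the cobordism map, followed by restriction to the images of the block idempotents. You are actually more explicit than the paper in justifying that last restriction via the composition formula and the homotopy identity $1_{x_0}\cup y\cup 1_{x_1}\simeq y$, which the paper leaves implicit.
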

\pf
The splitting (\ref{scindage:ev}) is a consequence of the lemma \ref{lem:idemspli}.

Let us show the decomposition (\ref{scindage:mo}). Let $M : (\Sigma_0,T_0)\fd (\Sigma_1,T_1)$, by construction $\m{V}_{\C}(M)$ is the restriction of the linear map $[M]$ to the image of the idempotent $p_{\Sigma_0,T_à}$ and the linear map is given by the matrix $(\ov{TV_{\C}}(M)_{c_0,c_1})_{c_0\in Col(T_0),c_1\in Col(T_1)}$ and for every colorings $c_0\in Col(T_0)$ and $c_1\in Col(T_1)$, we have : $$\dis{\ov{TV_{\C}}(M)_{c_0,c_1}=\sum_{y\in [M,B\grad]_{(\Sigma_0,x_{c_0}),(\Sigma_1,x_{c_1})}}\ov{HTV_{\C}}(M,y)_{c_0,c_1}}\, .$$ It follows that for every $x_0\in [\Sigma_0,B\grad]$ and $x_1\in [\Sigma_1,B\grad]$, the restriction of the map $[M]$ to the vector spaces : $V_{\C}(\Sigma_0,T_0,x_0)$ and  $V_{\C}(\Sigma_1,T_1,x_1)$ is equal to $\dis{\bigoplus_{y\in[M,B\grad]_{(\Sigma_0,x_0),(\Sigma_1,x_1)}}HTV_{\C}(M,y)_{x_0,x_1}}$. According to the lemma \ref{lem:idemspli}, the idempotent $p_{\Sigma_0,T_0}$  (resp. $p_{\Sigma_1,T_1}$) is splitting in the following way : $\dis{p_{\Sigma_0,T_0}=\bigoplus_{x\in[\Sigma,B\grad]}p_{\Sigma_0,T_0,x}}$ (resp. $\dis{p_{\Sigma_0,T_0}=\bigoplus_{x\in[\Sigma,B\grad]}p_{\Sigma_0,T_0,x}}$), thus we obtain the splitting (\ref{scindage:mo}).

\qed

A direct consequence of the theorem \ref{thm:decomp} is the following formula of the dimension of the vector space associated to a closed surface $\Sigma$~:
\begin{equation}\label{eq:dim}
\dim_{\Bbbk}(\m{V}_{\C}(\Sigma))=\sum_{x\in [\Sigma,B\grad]}\dim_{\Bbbk}(\m{W}_{\C}(\Sigma,x))\quad \textrm{and} \quad \dim_{\Bbbk}(\m{W}_{\C}(\Sigma,x))=HTV_{\C}(\Sigma\times S^1,1_x)\, .
\end{equation}

\subsection{The Turaev-Viro HQFT}

In this section we show that for every spherical category $\C$, the Turaev-Viro TQFT is obtained from a 2+1 dimensional HQFT whose target space is $B\grad$. Let us recall the definition of an HQFT.

\subsection*{$B$-manifolds}

Let $B$ be a $d$-dimensional manifold, a \emph{$d$-dimensional $B$-manifold} is a pair $(X,g)$ where $X$ is closed $d$-manifold and $g : X\fd B$ is a continuous map called \emph{characteristic map}.

A \emph{$B$-cobordism from $(X,g)$ to $(Y,h)$} is a pair $(W,F)$ where $W$ is a cobordism from $X$ to $Y$ and $F$ is a relative homotopy class of a map from $W$ to $B$ such that the restriction to $X$ (resp. $Y$) is $g$ (resp. $h$). From now on, we make no notational distinction between a (relative) homotopy class  and any of its representatives. Notice that if $B$ is a just a point $\{*\}$ then we recover the notion of cobordism.

We define the operation of gluing for $B$-cobordism. This notion is similar to the notion of gluing for cobordism. Let $(W,F) : (M,g)\fd (N,h)$ and $(W',F') : (N',h')\fd (P,k)$ be two $B$-cobordisms and $\Psi : N\fd N'$ be a diffeomorphism such that $h'\psi=h$. The composition of  $B$-cobordisms is defined in the following way~:~$(W',F')\circ(W,F)=(W'\cup W,F.F')$, where $F.F'$ is the following homotopy class :

$$
F.F'(x) = \left\{
\begin{array}{cc}
F(x) & x\in W \\
F'(x) & x\in W'\\
\end{array}
\right.
$$

Since $h'\Psi=h$, the map $F.F'$ is well defined.

The identity of $(X,g)$ is the $B$-cobordism $(X\times I,1_g)$, with $1_g$ the homotopy class of the map~:
\begin{align*}
X\times I &\fd B\\
(x,t) &\ap g(x)
\end{align*}

The disjoint union of $B$-cobordisms is defined in the same way of disjoint union of cobordisms.

The \emph{category of $d+1$ $B$-cobordisms} is the category whose objects are $d$-dimensional $B$-manifolds and morphisms are isomorphism classes of $B$-cobordisms. The category of $d+1$ $B$-cobordism is denoted by $Hcob(B,d+1)$, this is a strict monoidal category.

\subsection*{HQFTs}

A \emph{$d+1$ dimensional HQFT with target space $B$} is a monoidal functor from the category $Hcob(d+1,B)$ to the category of finite dimensional vector spaces.

Actually the vector space obtained from a $B$-manifold only depends on the manifold and the homotopy class of the characteristic map.
\begin{pro}
Let $F$ bet a $d+1$ dimensional HQFT with target space $B$ and $(X,g)$ be a $B$-manifold, then for every linear map $h : X\fd B$ homotopic to $g$ we have : $F(X,g)\cong F(X,h)$.
\end{pro}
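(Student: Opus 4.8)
The plan is to turn a homotopy between $g$ and $h$ into a $B$-cobordism which is invertible in $Hcob(d+1,B)$, and then invoke functoriality of $F$.

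First I would choose a homotopy $H\colon X\times I\to B$ with $H(\cdot,0)=g$ and $H(\cdot,1)=h$; this makes $(X\times I,H)$ a $B$-cobordism from $(X,g)$ to $(X,h)$. Let $H'$ denote the time-reversed map $H'(x,t)=H(x,1-t)$, so that $(X\times I,H')$ is a $B$-cobordism from $(X,h)$ to $(X,g)$. Applying the monoidal functor $F$ yields linear maps
$$
F(X\times I,H)\colon F(X,g)\to F(X,h)\qquad\text{and}\qquad F(X\times I,H')\colon F(X,h)\to F(X,g).
$$
I claim these are mutually inverse.

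To see this I would compute the two composites in $Hcob(d+1,B)$. Gluing $(X\times I,H')$ after $(X\times I,H)$ along the middle copy of $X$ (via the identity diffeomorphism, which is admissible since both characteristic maps on that copy equal $h$) gives the $B$-cobordism $\bigl(X\times I\cup_X X\times I,\,H.H'\bigr)$. The underlying manifold $X\times I\cup_X X\times I$ is diffeomorphic rel boundary to $X\times I$, and under this identification $H.H'$ becomes the loop of maps $X\to B$ that runs out along $H$ and back along $H'$. Such a relative homotopy class $\gamma.\overline{\gamma}$ is, in any space (here the mapping space $\mathrm{Map}(X,B)$), homotopic rel endpoints to the constant path; concretely one pushes the turning point towards $t=0$. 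Hence $H.H'$ is relatively homotopic to the constant homotopy $1_g$, so
$$
(X\times I,H')\circ(X\times I,H)=(X\times I,1_g)=\mathrm{id}_{(X,g)}
$$
in $Hcob(d+1,B)$, and symmetrically $(X\times I,H)\circ(X\times I,H')=\mathrm{id}_{(X,h)}$.

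Finally, since $F$ is a functor it preserves composition and identities, so $F(X\times I,H')\circ F(X\times I,H)=\mathrm{id}_{F(X,g)}$ and $F(X\times I,H)\circ F(X\times I,H')=\mathrm{id}_{F(X,h)}$; thus $F(X\times I,H)$ is an isomorphism and $F(X,g)\cong F(X,h)$. The only non-formal ingredient is the cancellation $\gamma.\overline{\gamma}\simeq\mathrm{const}$ rel endpoints used to identify the glued characteristic map with $1_g$; the rest is pure functoriality of $F$ together with the structure of the $B$-cobordism category recalled above.
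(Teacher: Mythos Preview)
Your proof is correct and follows essentially the same route as the paper: build the $B$-cobordism $(X\times I,H)$ and its time-reversal, show their composite in $Hcob(d+1,B)$ is the identity by exhibiting a relative homotopy from $H.H'$ to the constant $1_g$, and conclude by functoriality of $F$. The paper writes down an explicit formula for that relative homotopy rather than invoking the path-loop cancellation $\gamma.\overline{\gamma}\simeq\mathrm{const}$ in the mapping space, and only spells out one of the two composites, but the argument is the same.
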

\pf
Let $H$ be an homotopy between $g$ and $h$, then $(X\times I, H)$ is a $B$-cobordism from $(X,g)$ to $(X,h)$ and the pair $(X\times I,\tilde{H})$, where $\tilde{H}$ is the homotopy class of $H(x,1-t)$ is a $B$-cobordism from $(X,h)$ to $(X,g)$. Since $F$ is an HQFT, we have : $F(X\times I,H)\circ F(X\times I,\tilde{H})=F(X\times I, H\cup \tilde{H})$ with $H\cup \tilde{H}$ the homotopy class of the map :
\begin{align*}
X\times I &\fd B\\
(x,t) & \ap H\cup\tilde{H}=\left\{\begin{array}{cc}H(x,2t) & \mbox{ if
$0\leq t \leq \frac{1}{2}$}\\
H'(x,1-2t) & \mbox{ if $\frac{1}{2} \leq t \leq 1$}
\end{array}\right.
\end{align*}

Let us show that $H\cup \tilde{H}$ is homotopic to $1_g$. The map :
\begin{align*}
(\Sigma\times I) \times I & \fd \Sigma\\
((x,t),s)& \ap \left\{
\begin{array}{cc}
1_g(x,t) & \mbox{if $0\leq s$}
\leq
\frac{1}{2}, \\
H\cup H'(x,t(2s-1)) & \mbox{if $\frac{1}{2} \leq s \leq 1$}
\end{array}
\right.
\end{align*}
is an homotopy between $1_g$ and $H\cup \tilde{H}$. Thus the map $F(X\times I, H): F(X,g)\fd F(X,h)$ is an isomorphism.

\qed

\begin{theorem}\label{HQFT}
Let $\C$ be a spherical category. Set :
\begin{align}
\m{H}_{\C} : Hcob(B\grad,2+1) & \fd \textrm{vect}_{\Bbbk} \label{HQFT}\\
(\Sigma,g) & \ap \m{W}_{\C}(\Sigma,g),\nonumber\\
(M,F) & \ap  \m{W}_{\C}(M,F),\nonumber
\end{align}
where the vector space $\m{W}_{\C}(\Sigma,g)$ is defined for the homotopy class of $g$. The functor $\m{H}_{\C}$ is a $2+1$ dimensional HQFT with target space the classifying space $B\grad$.

The Turaev-Viro TQFT $\m{W}_{\C}$ is obtained from the HQFT $\m{H}_{\C}$  :

$$
\m{W}_{\C}(\Sigma)=\bigoplus_{x\in [\Sigma,B\grad]}\m{H}_{\C}(\Sigma,x).
$$
\end{theorem}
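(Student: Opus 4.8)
The plan is to verify the HQFT axioms for $\m{H}_{\C}$ one at a time, each verification reducing to a statement already proved in Sections~\ref{sec:HTVconstruc} and~\ref{sec:splitting}. \emph{Well-definedness on objects.} By definition $\m{H}_{\C}(\Sigma,g)=\m{W}_{\C}(\Sigma,[g])$, and it has already been shown that $\m{W}_{\C}(\Sigma,x)$ depends only on the surface $\Sigma$ and the homotopy class $x$: the cylinder isomorphisms $\m{W}_{\C}(\Sigma\times I,1_x)$ identify the spaces attached to different triangulations, and the preceding Proposition says the HQFT space only depends on the homotopy class of the characteristic map. So no choice is involved.

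\emph{Well-definedness on morphisms.} Given a $B$-cobordism $(M,F)\colon(\Sigma_0,g_0)\fd(\Sigma_1,g_1)$, I would pick triangulations $T_0,T_1$ of $\Sigma_0,\Sigma_1$, extend them to a triangulation $T$ of $M$, and set $\m{H}_{\C}(M,F)=\m{W}_{\C}(M,y)_{x_0,x_1}$ with $x_i=[g_i]$ and $y=[F]\in[M,B\grad]_{(\Sigma_0,x_0),(\Sigma_1,x_1)}$. Three independences must be checked: (i) of the interior triangulation of $M$ --- this is exactly Theorem~\ref{thm:inv}, the invariance of $HTV_{\C}$ under Pachner moves away from the boundary; (ii) of the chosen representative of $F$ within its relative homotopy class --- again Theorem~\ref{thm:inv}, since $HTV_{\C}(M,c_0,y)$ only depends on the class $y$; (iii) of the boundary triangulations $T_0,T_1$ --- obtained exactly as for $\m{V}_{\C}$ by conjugating with the cylinder isomorphisms $\m{W}_{\C}(\Sigma_i\times I,1_{x_i})$ relating the two choices. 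Finally an orientation-preserving diffeomorphism of $B$-cobordisms restricting to the identity on the boundary carries a triangulation to one inducing the same colorings, homotopy classes and $6j$-symbols, so the linear map depends only on the isomorphism class of $(M,F)$; it lands in $\m{W}_{\C}(\Sigma_1,x_1)$ by the definition of the composition recalled before Theorem~\ref{thm:decomp}.

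\emph{Functoriality, units, monoidality.} For composable $B$-cobordisms $(W,F)$ and $(W',F')$ meeting along $(\Sigma_1,g_1)$, the gluing identity for the homotopical invariant established just before Theorem~\ref{thm:decomp},
$$HTV_{\C}(W'\cup_{\Sigma_1}W,y\cup y',c,c'')=\textrm{contr}_{\Sigma_1,x_1}\bigl(HTV_{\C}(W,y,c,c')\pt HTV_{\C}(W',y',c',c'')\bigr),$$
combined with the splitting $p_{\Sigma_1,T_1}=\bigoplus_{x_1}p_{\Sigma_1,T_1,x_1}$ of Lemma~\ref{lem:idemspli}, yields $\m{H}_{\C}(W'\cup W,F.F')=\m{H}_{\C}(W',F')\circ\m{H}_{\C}(W,F)$ after summing the middle colorings over $Col_{x_1}(T_1)$ and restricting to $\im(p_{\Sigma_1,T_1,x_1})$; here one uses that $F.F'$ represents $y\cup y'$ because $F$ and $F'$ agree on $\Sigma_1$. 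For units, $\m{H}_{\C}(\Sigma\times I,1_x)=p_{\Sigma,T,x}$, whose restriction to its own image $\m{W}_{\C}(\Sigma,x)$ is the identity. Monoidality is the multiplicativity of the Turaev-Viro state sum: $Col(T_0\sqcup T_0')=Col(T_0)\times Col(T_0')$, the homotopy class attached to a coloring of a disjoint union is the pair of the homotopy classes of its restrictions, and $w_cW_c$ factors accordingly, so $\m{H}_{\C}((\Sigma,g)\sqcup(\Sigma',g'))\cong\m{H}_{\C}(\Sigma,g)\pt\m{H}_{\C}(\Sigma',g')$ and $\m{H}_{\C}(\emptyset)=\Bbbk$.

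\emph{The splitting.} Since $\m{H}_{\C}(\Sigma,x)=\m{W}_{\C}(\Sigma,x)$ by construction, $\bigoplus_{x\in[\Sigma,B\grad]}\m{H}_{\C}(\Sigma,x)=\bigoplus_{x}\m{W}_{\C}(\Sigma,x)$, which is the Turaev-Viro vector space $\m{V}_{\C}(\Sigma)$ by the decomposition~\eqref{scindage:ev} of Theorem~\ref{thm:decomp}. I expect the main obstacle to be the bookkeeping in the morphism step: proving that $\m{W}_{\C}(M,F)$ genuinely descends to isomorphism classes of $B$-cobordisms and to relative homotopy classes of $F$, and, in the composition step, that the homotopy datum $y\cup y'$ is well defined and matches the way colorings of $T$ and $T'$ glue. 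Once these compatibilities are secured, every axiom is a formal consequence of Theorem~\ref{thm:inv}, Lemma~\ref{lem:idemspli} and Theorem~\ref{thm:decomp}.
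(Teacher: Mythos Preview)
Your proposal is correct and follows essentially the same route as the paper: functoriality from the gluing identity for $HTV_{\C}$ established before Theorem~\ref{thm:decomp}, the unit from $\m{W}_{\C}(\Sigma\times I,1_x)=\id$, monoidality from multiplicativity of the state sum and of the idempotents under disjoint union, and the final splitting from Theorem~\ref{thm:decomp}. You are simply more explicit than the paper about the well-definedness bookkeeping (independence of triangulations and of the representative of $F$), which the paper handled in the discussion preceding the theorem rather than in the proof itself.
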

\pf
Let us show that $\m{H}_{\C}$ is a functor. Let $\Sigma$ (resp. $\Sigma'$ and $\Sigma''$) be a closed oriented surface endowed with a triangulation $T_{\Sigma}$ (resp. $T'$ and $T''$). For every cobordisms $(M,F) : (\Sigma,g)\fd (\Sigma',g')$ and $(M',F') : (\Sigma',g')\fd (\Sigma'',g'')$, we have shown that for every colorings $c\in Col_{x_g}(T_{\Sigma})$, $c'\in Col_{x_{g'}}(T_{\Sigma'})$ and $c''\in Col_{x_{g''}}(T_{\Sigma''})$, where $x_g$ (resp. $x_{g'}$, $x_{g''}$) is the homotopy class of $g$ (resp. $g'$, $g''$), and for every $y\in [M,B\grad]_{(\Sigma,x_g),(\Sigma',x_{g'})}$, $y'\in [M',B\grad]_{(\Sigma',x_{g'}),(\Sigma'',x_{g''})}$ we have :
$$\sum_{c'\in Col_{x_{g'}}(T')}\ov{HTV_{\C}}(M',c',c'',F')\circ \ov{HTV_{\C}}(M,c,c',F)=\ov{HTV_{\C}}(M'\cup_{\Sigma'}M,c,c'',F\cup F').$$
Thus
$$\widetilde{HTV_{\C}}(M',F)_{x_{g'},x_{g''}}\circ\widetilde{HTV_{\C}}(M,F)_{x_g,x_{g'}}=\widetilde{HTV_{\C}}(M'\cup_{\Sigma'}M,F'\cup F)_{x_g,x_{g''}}.$$
If we consider the restriction to the image of the idempotent $p_{\Sigma,T,x_g}$, we have : $\m{W}_{\C}(M',F')\m{W}_{\C}(M,F)=\m{W}_{\C}(M'\cup_{\Sigma}M,F'\cup F)$. Furthermore we have : $\m{W}_{\C}(\Sigma\times I, 1_x)=\id_{\m{W}_{\C}(\Sigma,x)}$.

Let us show that $\m{H}_{\C}$ is monoidal. Let $\Sigma$ (resp. $\Sigma'$) be a closed surface endowed with a triangulation $T$ (resp. $T'$), then the vector space :
$${V_{\C}(\Sigma\coprod \Sigma',T\cup T')=V_{\C}(\Sigma,T)\pt V_{\C}(\Sigma',T')}\, .$$
Thus for every homotopy classes $x\in [\Sigma,B\grad]$ and $x'\in [\Sigma,B\grad]$, we have : $V_{\C}(\Sigma\coprod \Sigma',T\cup T',x\cup x')=V_{\C}(\Sigma,T,x)\pt V_{\C}(\Sigma',T',x')$. It follows from the construction of the Turaev-Viro invariant that~: $p_{\Sigma\coprod \Sigma',T\cup T'}=p_{\Sigma,T}\pt_{\Bbbk}p_{\Sigma', T'}$ (\cite{Tu}). Therefore we obtain~: $p_{\Sigma\coprod\Sigma',T\cup T',x\coprod x'}=p_{\Sigma,T,x}\pt_{\Bbbk}p_{\Sigma', T',x'}$. It results that the functor $\m{H}_{\C}$ is monoidal for the objects. Let us show that $\m{H}_{\C}$ is monoidal for the morphisms. For every morphisms $(M,F) : (\Sigma_1,g_1)\fd (\Sigma'_1,g'_1)$ and $(M',F') : (\Sigma_2,g_2)\fd (\Sigma'_2,g'_2)$, we have :
 $$\widetilde{HTV_{\C}}(M\coprod M',F\cup F')_{(x_{g_1},x_{g'_1}),(x_{g_2},x_{g'_2})}=\widetilde{HTV_{\C}}(M,F)_{x_{g_1},x_{g'_1}}\pt \widetilde{HTV_{\C}}(M',F')_{x_{g_2},x_{g'_2}}\, ,$$
 where for every $i\in\{1,2\}$ $x_{g_i}$ (resp. $x_{g'_i}$) the homotopy class of $g_i$ (resp. $g'_i$). It follows that $\m{H}_{\C}$ is an HQFT.
\qed

For every spherical category $\C$, the HQFT $\m{H}_{\C}$ is called \emph{Turaev-Viro HQFT}.

\subsubsection{Group categories $\C_{G,\alpha}$ with $G$ an abelian group}

We will compute the vector spaces associated to a closed surface of genus $g$ by the Turaev-Viro HQFT in the case of group categories defined for an abelian group. From now on, we denote a closed surface of genus $g$ by $\Sigma_g$. Let $G$ be an abelian finite group and $\alpha\in H^3(G,\Bbbk^*)$, we denote by $\C_{G,\alpha}$ the associated group category.

Below is a singular triangulation $T_g$ of a closed surface of genus $g=2$.

\[
\xy
(0,0)*+{x}="A";(10,8)*+{x}="B";(20,10)*+{x}="C";(30,8)*+{x}="D";(40,0)*+{x}="E";(30,-8)*+{x}="G";(10,-8)*+{x}="I";(20,-10)*+{x}="H";
{\ar^{a_1} "A";"B"}; {\ar^{b_1} "B";"C"}; {\ar_{a_1} "D";"C"};
{\ar_{b_1} "E";"D"}; {\ar_{b_2}
"A";"I"};{\ar_{a_2} "I";"H"}; {\ar^{a_2} "E";"G"};{\ar^{b_2}
"G";"H"};{\ar_{c_1} "A";"C"};{\ar^{d_1} "E";"C"};{\ar_{e}
"A";"E"};{\ar^{d_2} "A";"H"};{\ar_{c_2} "E";"H"};
\endxy
\]

The above singular triangulation admits a unique 0-simplex and four 1-simplexes.  We can extend the construction to closed surfaces of genus $g\geq 2$. We denote by $a_1,...,a_g,b_1,...,b_g,c_1,...,c_g,d_1,...d_g,e$ the 1-simplexes of $T_g$.  Let us describe the set of colorings of $T_g$. Let $c$ be a coloring of the above singular triangulation of $\Sigma_g$. We set~:
\begin{align*}
c(a_i)&=h_i\, ,\\
c(b_i)&=k_i\, ,\\
\end{align*}
for every $1\leq i \leq g$. With the colors $h_i$ and $k_i$, we obtain the colors of each 1-simplex of the triangulation, indeed~: $c(c_i)=a_ib_i$ and $c(d_i)=b_ia_i$. The definition of colorings gives the following relations~: $[a_1,b_1]...[a_g,b_g]=1$, with $[a_i,b_i]=a_ib_ia_i^{-1}b_i^{-1}$ for every $i$. Since $G$ is abelian group, the previous  condition is always verified. Reciprocally the data $(h_1,...,h_g,k_1,...,k_g)$ defines a coloring $c$ of $\Sigma_g$, in such a way~: $c(a_i)=h_i$ and $c(b_i)=k_i$.

Let us describe the gauge action on the set of colorings of $T_g$. Let us recall that the singular triangulation $T_g$ admits a unique 0-simplex thus the gauge group of $T_g$ can be identified to $G$. Let $c=(h_1,...,h_g,k_1,...,k_g)$ and $c'=(h'_1,...,h'_g,k'_1,...,k'_g)$ be two colorings of $T_g$, the colorings $c$ and $c'$ are equivalent if and only if there exists $h\in G$ such that~:
\begin{align*}
h'_i&= hh_ih^{-1}=h_i\, ,\\
g'_i&= hg_ih^{-1}=g_i\, ,
\end{align*}
for every $i$. It follows that for every $x\in [\Sigma_g,BG]$ the set $Col_x(\Sigma_g)$ contains a unique coloring.

We will describe the vector spaces associated to $\Sigma_g$ by the Turaev-Viro HQFT $\m{H}_{\C_{G,\alpha}}$. First, we will describe the vector space $V_{\C_{G,\alpha}}(\Sigma_g,T_g)$ defined in the construction of the Turaev-Viro invariant. In the category $\C_{G,\alpha}$ the vector space associated to the oriented 2-simplexes is a one dimensional vector space, it follows~: $$
V_{\C_{G,\alpha}}(\Sigma_g,T_g)=\bigoplus_{c\in Col(T_g)}\Bbbk=\bigoplus_{(h_1,...,h_g,k_1,...,k_g)\in G^{2g}}\Bbbk\, .
$$
We have shown that for every homotopy class $x\in [\Sigma_g,BG]$ the set $Col_x(T_g)$ contains a unique coloring, it follows that~: $V_{\C_{G,\alpha}}(\Sigma_g,T_g,x)=\Bbbk$. In order to describe the vector space $\m{H}_{\C_{G,\alpha}}(\Sigma_g,x)$ for every $x\in [\Sigma_g,BG]$ we will determine the idempotent $p_{\Sigma_g,T_g,x}:V_{\C_{G,\alpha}}(\Sigma_g,T_g,x)\fd V_{\C_{G,\alpha}}(\Sigma_g,T_g,x)$ for every $x\in [\Sigma_g,BG]$. Let $x\in [\Sigma_g,BG]$, the linear map $p_{\Sigma_g,x}$ is an idempotent of a one dimensional vector space, thus $p_{\Sigma_g,x}=0$ or $p_{\Sigma_g,x}=\id$. Since the Turaev-Viro invariant of the manifold $\Sigma_g\times I$ with a fixed colorings of the boundary is equal to 1, it follows that $p_{\Sigma_g,x}=\id$. As a consequence we have~: $\m{H}_{\C_{G,\alpha}}(\Sigma,x)=\Bbbk$. The lemma \ref{lem:idemspli} gives the splitting of the Turaev-Viro TQFT $\m{V}_{\C_{G\alpha}}$ into blocks given by the Turaev-Viro HQFT $\m{H}_{\C_{G,\alpha}}$. It follows that in the case of group categories $\C_{G,\alpha}$ with $G$ an abelian group, the splitting of the Turaev-Viro TQFT by the Turaev-Viro HQFT is maximal.

\begin{pro}\label{pro:max}
  Let  $G$ an abelian group, $\alpha\in H^3(G,\Bbbk^*)$, $\C_{G,\alpha}$ be a group category, $g$ be a positive integer and $\Sigma_g$ be a closed surface of genus $g$, we have~:  $$
  \m{V}_{\C_{G,\alpha}}(\Sigma_g)=\bigoplus_{x\in [\Sigma_g,BG]}\m{H}_{\C_{G,\alpha}}(\Sigma_g,x)\,
  $$ with $\m{H}_{\C_{G,\alpha}}(\Sigma_g,x)=\Bbbk$ for every $x\in[\Sigma_g,BG]$.
\end{pro}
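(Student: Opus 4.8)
The plan is to carry out the reductions already prepared in the paragraphs above. I would fix the one-vertex singular triangulation $T_g$ of $\Sigma_g$ described above, so that $n_0(T_g)=1$ and the gauge group $\m{G}_{T_g}$ is identified with $G$, and first record the three facts assembled there: (i) $Col(T_g)\cong G^{2g}$ via $c\mapsto(c(a_i),c(b_i))_i$, the single $2$-simplex relation $[c(a_1),c(b_1)]\cdots[c(a_g),c(b_g)]=1$ being automatic since $G$ is abelian; (ii) the gauge action, being by conjugation, is trivial, so by Proposition \ref{pro:jau} every block $Col_x(T_g)$ of the partition $Col(T_g)=\coprod_{x\in[\Sigma_g,BG]}Col_x(T_g)$ is a singleton and $[\Sigma_g,BG]$ is in bijection with $G^{2g}$; (iii) since $\C_{G,\alpha}$ is a group category, for each oriented $2$-simplex $f$ and coloring $c$ the product of the three incident colors is $\Bbbi$, so $V_{\C_{G,\alpha}}(f,c)=\ho{\Bbbi}{c(01)\pt c(12)\pt c(20)}$ is one-dimensional; hence $V_{\C_{G,\alpha}}(\Sigma_g,T_g,x)=\Bbbk$ for every $x$ and $V_{\C_{G,\alpha}}(\Sigma_g,T_g)=\bigoplus_{x}\Bbbk$.

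The heart of the argument is to evaluate, for each $x\in[\Sigma_g,BG]$, the idempotent $p_{\Sigma_g,T_g,x}=\widetilde{HTV_{\C_{G,\alpha}}}(\Sigma_g\times I,1_x)_{x,x}$ on the one-dimensional space $V_{\C_{G,\alpha}}(\Sigma_g,T_g,x)$. Being an idempotent of $\Bbbk$ it equals $0$ or $\id$; under the canonical identification with $\Bbbk$ it is the scalar $HTV_{\C_{G,\alpha}}(\Sigma_g\times I,1_x,c,c)$, where $c$ is the unique coloring in $Col_x(T_g)$. Because $Col_x(T_g)$ is a singleton, Theorem \ref{thm:inv} collapses the sum over homotopy classes and yields $HTV_{\C_{G,\alpha}}(\Sigma_g\times I,1_x,c,c)=TV_{\C_{G,\alpha}}(\Sigma_g\times I,c,c)$. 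I would then show that this Turaev--Viro vector of the cylinder, with matching colorings on the two ends, equals $1$, so that $p_{\Sigma_g,T_g,x}=\id$ and hence $\m{H}_{\C_{G,\alpha}}(\Sigma_g,x)=\m{W}_{\C_{G,\alpha}}(\Sigma_g,x)=\im(p_{\Sigma_g,T_g,x})=\Bbbk$ for every $x$. Combining this with Lemma \ref{lem:idemspli} (equivalently, Theorems \ref{thm:decomp} and \ref{HQFT}) gives $\m{V}_{\C_{G,\alpha}}(\Sigma_g)=\im(p_{\Sigma_g,T_g})=\bigoplus_{x}\im(p_{\Sigma_g,T_g,x})=\bigoplus_{x}\m{H}_{\C_{G,\alpha}}(\Sigma_g,x)$, which is the assertion.

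The one genuinely substantive point is the equality $TV_{\C_{G,\alpha}}(\Sigma_g\times I,c,c)=1$, i.e.\ that the cylinder acts as the identity on each one-dimensional block. I would prove it either by a direct state-sum computation on an explicit triangulation of $\Sigma_g\times I$ extending $(c,c)$ --- this is the usual Dijkgraaf--Witten-type calculation: all dimensions are $\pm1$ with square $1$, all $6j$-symbols are values of the $3$-cocycle $\alpha$, and summing over the coboundary directions in $Col_{c,c}(T)$ the product telescopes and collapses, against the normalization $\dc^{-n_0(T)+n_0(T_0)/2}$, to $1$ by the cocycle identity --- or, more economically, by noting that distinct colorings of $T_g$ lie in distinct homotopy classes, so the matrix of $p_{\Sigma_g,T_g}$ is diagonal with entries in $\{0,1\}$, whence it suffices to exclude the zero entries, which follows from the non-vanishing of $HTV_{\C_{G,\alpha}}(\Sigma_g\times S^1,1_x)$ ($=\dim_{\Bbbk}\m{W}_{\C_{G,\alpha}}(\Sigma_g,x)$ by \eqref{eq:dim}), computed in the same manner as the examples in Section \ref{sec:calcul}. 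Everything else is the bookkeeping indicated above.
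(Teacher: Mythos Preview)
Your proposal is correct and follows essentially the same route as the paper: the one-vertex triangulation $T_g$, the identification $Col(T_g)\cong G^{2g}$ with trivial gauge action, the one-dimensionality of each $V_{\C_{G,\alpha}}(\Sigma_g,T_g,x)$, and the reduction to showing that the cylinder idempotent on each block is $\id$ rather than $0$. The paper simply asserts that the Turaev--Viro invariant of $\Sigma_g\times I$ with fixed matching boundary coloring equals $1$; you go further and sketch two ways to justify this, which is fine. One small imprecision: the equality $HTV_{\C_{G,\alpha}}(\Sigma_g\times I,1_x,c,c)=TV_{\C_{G,\alpha}}(\Sigma_g\times I,c,c)$ follows not from $Col_x(T_g)$ being a singleton but from the fact (established in the ``Idempotents'' paragraph) that $[\Sigma_g\times I,BG]_{(\Sigma_g,x),(\Sigma_g,x)}=\{1_x\}$ because the cylinder retracts onto $\Sigma_g$; the conclusion, however, is unaffected.
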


\subsubsection*{The torus $S^1\times S^1$}

We compute the Turaev-Viro HQFT of the torus $S^1\times S^1$ in the case of the quantum group $U_q(\frak{sl}_2)$ with $q$ a root of unity.

Below is a singular triangulation $T_{S^1\times S^1}$ of the torus $S^1\times S^1$~:
\[
\xy
(0,0)*+{x}="A"; (0,25)*+{x}="B";
(35,0)*+{x}="C";(35,25)*+{x}="D";
"A";"B" **\dir{-} ?(.5)*\dir{>}+(-2,0)*{\scriptstyle a}; "A";"C" **\dir{-} ?(.5)*\dir{>>}+(0,-2)*{\scriptstyle b} ; "B";"D" **\dir{-} ?(.5)*\dir{>>}+(0,2)*{\scriptstyle b}; "C";"D" **\dir{-} ?(.5)*\dir{>}+(2,0)*{\scriptstyle a}; "A";"D" **\dir{--} ?(.5)*\dir{>}+(2,-1)*{\scriptstyle e} ;
(20,-8)*{T_{S^1\times S^1}};
\endxy
\]

\bigskip

There are three oriented 1-simplexes $a$, $b$ and $e$ and one 0-simplex $x$. Let  $c$ be a coloring of this singular triangulation, then we obtain the triple $(c(a),c(b),c(e))$. By definition of the colorings, the triple $(c(a),c(b),c(e))$ is an admissible triple. Reciprocally every admissible triple $(i,j,k)$ defines a coloring $c$ of $T_{S^1\times S^1}$, in such a way~: $c(a)=i$, $c(b)=j$ and $c(e)=k$. Let us describe the gauge action on the set of colorings of $T_{S^1\times S^1}$. Let $c=(i,j,k)$ be a coloring of $T_{S^1\times S^1}$, we recall that $\cs{c}$ is the coloring $c$ with value in the graduator $\Gamma_{U_q(\frak{sl}_2)}=\Zz_2$ and $\cs{?}:\Lambda_{U_q(\frak{sl}_2)}\fd \Zz_2$ is the projection map. We will consider the group $\Zz_2$ with the multiplicative notation. By definition of the $\Zz_2$-coloring, we have the following relation~: $\cs{i}\cs{j}=\cs{k}$. It follows that the $\Zz_2$-coloring $\cs{c}$ is given by the pair $(\cs{i},\cs{j})\in \Zz_2\times \Zz_2$. The singular triangulation $T_{S^1\times S^1}$ admits a unique 0-simplex, thus the gauge group of $T_{S^1\times S^1}$ can be identified to $\Zz_2$. Let $c=(i,j,k)$ and $c'=(i',j',k')$ be two colorings of $T_{S^1\times S^1}$, they are equivalent if and only if there exists a gauge $h\in \Zz_2$ such that~:
\begin{align*}
\cs{i'}&=h\cs{i}h^{-1}=\cs{i}\, ,\\
\cs{j'}&=h\cs{j}h^{-1}=\cs{j}\, .
\end{align*}
It follows that the equivalence class of a coloring $c=(i,j,k)$ is given by the pair $(\cs{i},\cs{j})\in \Zz_2\times \Zz_2$. There exists four homotopy classes in $[S^1\times S^1,B\Zz_2]$ which corresponds to the equivalent classes of colorings~: $(1,1)$, $(1,-1)$, $(-1,1)$ and $(-1,-1)$. From now on, we denote the homotopy classes of $[S^1\times S^1,B\Zz_2]$ by the corresponding equivalent classes of colorings.

Let us describe the vector spaces associated to $S^1\times S^1$ by the Turaev-Viro HQFT. First, we will describe the vector space $V_{U_q(\frak{sl}_2)}(S^1\times S^1,T_{S^1\times S^1})$ defined in the construction of the Turaev-Viro and then after we will describe the idempotents. For the sake of clarity, the vector space $V_{U_q(\frak{sl}_2)}(S^1\times S^1,T_{S^1\times S^1})$ will be denoted $V(S^1\times S^1)$. In the case of $U_q(\frak{sl}_2)$, the vector space associated to an oriented 2-simplex is a one dimensional vector space, it follows~:
\begin{align*}
V(S^1\times S^1)&=\bigoplus_{c\in Col(T_{S^1\times S^1})}\Bbbk=\bigoplus_{(i,j,k)~admissible~triple }\Bbbk\, ,\\
V(S^1\times S^1,(1,1))&=\bigoplus_{c\in Col_{(1,1)}(T_{S^1\times S^1})}\Bbbk=\bigoplus_{\substack{(i,j,k)~admissible~triple\\ \cs{i}=\cs{j}=1}}\Bbbk\, ,\\
V(S^1\times S^1,(1,-1))&=\bigoplus_{c\in Col_{(1,-1)}(T_{S^1\times S^1})}\Bbbk=\bigoplus_{\substack{(i,j,k)~admissible\\ \cs{i}=1~  and ~ \cs{j}=-1}}\Bbbk\, ,\\
V(S^1\times S^1,(-1,1))&=\bigoplus_{c\in Col_{(-1,1)}(T_{S^1\times S^1})}\Bbbk=\bigoplus_{\substack{(i,j,k)~admissible\\ \cs{i}=-1~  and ~ \cs{j}=1}}\Bbbk\, ,\\
V(S^1\times S^1,(-1,-1))&=\bigoplus_{c\in Col_{(-1,-1)}(T_{S^1\times S^1})}\Bbbk=\bigoplus_{\substack{(i,j,k)~admissible\\ \cs{i}=-1~  and ~ \cs{j}=-1}}\Bbbk\, .
\end{align*}
 By symmetry of the admissible triple we have $V(S^1\times S^1,(1,-1))=V(S^1\times S^1,(-1,1))$. For every coloring $c=(i,j,k)\in Col_{(1,-1)}(T_{S^1\times S^1})$, the triple $(j,i,k)$ is an admissible triple. Thus the triple $c'=(j,i,k)$ is a coloring of $T_{S^1\times S^1}$ such that $\cs{c'}=(-1,1)$. It follows that every coloring $c=(i,j,k)\in Col_{(1,-1)}(T_{S^1\times S^1})$ defines a coloring $\tilde{c}=(j,i,k)\in Col_{(1,-1)}(T_{S^1\times S^1})$. We obtain the following bijection~:
\begin{align*}
Col_{(1,-1)}(T_{S^1\times S^1})&\fd Col_{(-1,1)}(T_{S^1\times S^1})\\
c=(i,j,k) & \ap \tilde{c}=(j,i,k)\, .
\end{align*}
Furthermore by symmetry of $S^1\times S^1\times I$, we have~:%
\begin{align}
HTV_{U_q(\frak{sl}_2)}(S^1\times S^1\
\times I, (1,-1,1))_{c,c'}&= HTV_{U_q(\frak{sl}_2)}(S^1\times S^1\times I, (-1,1,1))_{\tilde{c},\tilde{c'}}\, ,\label{idepotent1}\\
HTV_{U_q(\frak{sl}_2)}(S^1\times S^1\times I, (1,-1,-1))_{\tilde{c},\tilde{c'}}&= HTV_{U_q(\frak{sl}_2)}(S^1\times S^1\times I, (-1,1,-1))_{\tilde{c},\tilde{c'}}\, ,\label{idepotent2}\\
\end{align}

for every $c, c'\in Col_{(1,-1)}(T_{S^1\times S^1})$. Since the vector spaces $V(S^1\times S^1,(1,-1))$ and $V(S^1\times S^1,(-1,1))$ are the same, the above equalities (\ref{idepotent1}) and (\ref{idepotent2}) give~: $p_{S^1\times S^1,T_{S^1\times S^1},(1,-1)}=p_{S^1\times S^1,T_{S^1\times S^1},(-1,1)}$. It follows~:
$$
\m{H}_{U_q(\frak{sl}_2)}(S^1\times S^1,(1,-1))=\m{H}_{U_q(\frak{sl}_2)}(S^1\times S^1,(-1,1))\, .
$$

\subsubsection*{Computation}

For $r=3$, we have the following idempotents~:
\begin{align*}
p_{S^1\times S^1,(1,1)}&=(1)\, ,\\
p_{S^1\times S^1,(1,-1)}&=(1)\, ,\\
p_{S^1\times S^1,(-1,-1)}&=(1)\, ,
\end{align*}

and the dimensions of the vector spaces are~:
\begin{align*}
\dim(\m{H}_{U_q(\frak{sl}_2)}(S^1\times S^1,(1,1))) &= 1\, ,\\
\dim(\m{H}_{U_q(\frak{sl}_2)}(S^1\times S^1,(1,-1)))&= 1 \, ,\\
\dim(\m{H}_{U_q(\frak{sl}_2)}(S^1\times S^1,(-1,-1)))&=1\,.
\end{align*}

For $r=4$, we have the following idempotents~:
\begin{align*}
p_{S^1\times S^1,(1,1)}&=\frac{1}{4}\left(\begin{array}{cccc}3 & 1 &1 & 1\\ 1 &3 &-1 &-1 \\ 1 &-1 & 3 & -1\\ 1& -1 &-1 & 3\end{array}\right)\, ,\\
&\\
p_{S^1\times S^1,(1,-1)}&=\left(\begin{array}{cc}1 & 0 \\ 0 &1 \end{array}\right)\, ,\\
&\\
p_{S^1\times S^1,(-1,-1)}&=\left(\begin{array}{cc}1 & 0 \\ 0 &1 \end{array}\right)\, ,\\
\end{align*}

The dimensions of the vector spaces are~:
\begin{align*}
\dim(\m{H}_{U_q(\frak{sl}_2)}(S^1\times S^1,(1,1))) &= 3\, ,\\
\dim(\m{H}_{U_q(\frak{sl}_2)}(S^1\times S^1,(1,-1)))&= 2 \, ,\\
\dim(\m{H}_{U_q(\frak{sl}_2)}(S^1\times S^1,(-1,-1)))&=2\, ,
\end{align*}

For $r=5$, we set $A$ a primitive $10$th root of unity such that $A^2$ is a primitive $5$th root of unity. For the sake of clarity, we denote by $\sigma$ the sum $A+A^{-1}$. This notation is used in \cite{matveev}. We obtain the following idempotents~:
\begin{align*}
p_{S^1\times S^1,(1,1)}&= \frac{1}{2+\sigma}\left(\begin{array}{ccccc}2 & 1 & 1 & 1 &\sigma^{-3/2} \\ 1 & 3 & 1-\sigma & 1-\sigma & -\sigma^{-5/2}\\  1 &  1-\sigma& 3 & 1-\sigma & -\sigma^{-5/2}\\  1 &  1-\sigma & 1-\sigma & 3 & -\sigma^{-5/2}\\  \sigma^{-3/2} & -\sigma^{-5/2} & -\sigma^{-5/2} & -\sigma^{-5/2} & 4\sigma-3\\\end{array}\right)\, , \\
&\\
p_{S^1\times S^1,(1,1)}&= \frac{1}{2+\sigma}\left(\begin{array}{ccccc}2 & 1 & 1 & 1 &\sigma^{-3/2} \\ 1 & 3 & 1-\sigma & 1-\sigma & -\sigma^{-5/2}\\  1 &  1-\sigma& 3 & 1-\sigma & -\sigma^{-5/2}\\  1 &  1-\sigma & 1-\sigma & 3 & -\sigma^{-5/2}\\  \sigma^{-3/2} & -\sigma^{-5/2} & -\sigma^{-5/2} & -\sigma^{-5/2} & 4\sigma-3\\\end{array}\right)\, , \\
&\\
p_{S^1\times S^1,(-1,-1)}&=\frac{1}{2+\sigma}\left(\begin{array}{ccccc}2 & 1 & 1 & 1 &\sigma^{-3/2} \\ 1 & 3 & 1-\sigma & 1-\sigma & -\sigma^{-5/2}\\  1 &  1-\sigma& 3 & 1-\sigma & -\sigma^{-5/2}\\  1 &  1-\sigma & 1-\sigma & 3 & -\sigma^{-5/2}\\  \sigma^{-3/2} & -\sigma^{-5/2} & -\sigma^{-5/2} & -\sigma^{-5/2} & 4\sigma-3\\\end{array}\right)\, .
\end{align*}

The dimensions of the vector spaces are~:
\begin{align*}
\dim(\m{H}_{U_q(\frak{sl}_2)}(S^1\times S^1,(1,1))) &= 4\, ,\\
\dim(\m{H}_{U_q(\frak{sl}_2)}(S^1\times S^1,(1,-1)))&= 4 \, ,\\
\dim(\m{H}_{U_q(\frak{sl}_2)}(S^1\times S^1,(-1,-1)))&=4\, ,
\end{align*}

\medskip

For $r=6$,  we give one idempotent~:
\begin{align*}
p_{S^1\times S^1,(1,1)}&= \frac{1}{12}\left(\begin{array}{ccccccccccc}5 & 3 & 3 & 3 & 1& 1 & 1 & 1 & 1 &1 & \sqrt{2} \\ 3 & 9 & -1 & -1  & 3 & -1 & -1& 1 & 1& -1 &-\sqrt{2}\\ 3 & -1 & 9 & -1 &  -1& 3& -1 &-1 &1 & 1& -\sqrt{2}\\3 & -1 & -1  & 9 &-1 & -1& 3& 1& -1& 1& -\sqrt{2}\\  1 & 3  & -1  &-1  & 5& 1 & 1& -3& -3& 1&\sqrt{2} \\ 1 & -1 & 3 &-1 & 1&5 & 1 &1 &-3 & -3& \sqrt{2}\\ 1 & -1& -1 &3 & 1& 1&5 & -3 &1 &-3 &\sqrt{2} \\  1 & 1  & -1 &  1& -3&1 & -3 &9 & -1& -1&\sqrt{2} \\ 1 & 1 & 1& -1& -3 &-3 & 1& -1&9 &-1 & \sqrt{2}\\ 1 & -1 & 1& 1& 1&-3 &-3 &-1 &-1 &9 & \sqrt{2} \\ \sqrt{2} & -\sqrt{2}& -\sqrt{2}& -\sqrt{2}&\sqrt{2} & \sqrt{2}&\sqrt{2} &\sqrt{2} &\sqrt{2} &\sqrt{2} & 10 \\\end{array}\right)\, , \\
& \\%
%p_{S^1\times S^1,(1,-1)}&= \frac{1}{12}\left(\begin{array}{cccccccc} 6 & 0 & 0 & -\sqrt{2}&\sqrt{2} & \sqrt{2} & & 4\\ 0& & & & & & & \\ 0 & & & & & & & \\-\sqrt{2} & & & & & & & \\ \sqrt{2}& & & & & & & \\ \sqrt{2} & & & & & & & \\ & & & & & & & \\ 4 & & & & & & & \\\end{array}\right)\, , \\
%p_{S^1\times S^1,(-1,-1)}&=\frac{1}{12}\left(\begin{array}{cccccccc} & & & & & & & \\ & & & & & & & \\ & & & & & & & \\ & & & & & & & \\ & & & & & & & \\ & & & & & & & \\ & & & & & & & \\ & & & & & & & \\\end{array}\right)\, .
\end{align*}

The dimensions of the vector spaces are~:
\begin{align*}
\dim(\m{H}_{U_q(\frak{sl}_2)}(S^1\times S^1,(1,1))) &= 7\, ,\\
\dim(\m{H}_{U_q(\frak{sl}_2)}(S^1\times S^1,(1,-1)))&=  6\, ,\\
\dim(\m{H}_{U_q(\frak{sl}_2)}(S^1\times S^1,(-1,-1)))&=6\, ,
\end{align*}

\section{The twisted homological Turaev-Viro invariant}\label{sec:twist}

We recall the construction of the homological twisted generalized Turaev-Viro invariant \cite{homyetter}.

Let $\C$ be a semisimple tensor category with braiding and $M$ be a closed 3-manifold. We denote $H_1(M,\aut)$ the first homology group of $M$ with coefficients in $\aut$. Let $h\in H_1(M,\aut)$ and $\alpha$ be a representative of $h$, $\alpha^e\in \aut$ is the coefficient of $e$ in $\aut$. For every scalar object $X$, $\alpha^e(X)=\alpha^e_X \id_X$, with $\alpha^e_X\in \Bbbk^*$. Let $M$ be a closed 3-manifold and $h\in H_1(M,\aut)$, the homological twisted Turaev-Viro invariant of $(M,h)$ is the scalar~:
\begin{equation}
Y_{\C}(M,h)=\dc^{-n_0(T)}\sum_{c\in Col(T)}\prod_{e\in T^1}\alpha^e_{c(e)}w_cW_c\, ,\label{YTV}
\end{equation}
with $n_0(T)$ the number of 0-simplexes of a triangulation $T$. The scalar $Y_{\C}(M,h)$ does not depend on the choice of the triangulation of $M$ and the representative of $h$. In \cite{homyetter}, Yetter prove that this scalar is an invariant for a semisimple tensor category with braiding. With some changes in the proof we can show that the invariant is well defined for spherical categories.
\begin{pro}
  Let $\C$ be s spherical category, $M$ be a closed 3-manifold and $h\in H_1(M,\aut)$. The scalar $\dis{Y_{\C}(M,h)=\dc^{-n_0(T)}\sum_{c\in Col(T)}\prod_{e\in T^1}\alpha^e_{c(e)}w_cW_c}$, with $\alpha$ a representative of $h$, is an invariant of the pair $(M,h)$.
\end{pro}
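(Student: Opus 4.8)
The plan is to reduce the well-definedness of $Y_{\C}(M,h)$ to two separate points: for a \emph{fixed} triangulation $T$, the state sum does not depend on the cycle $\alpha$ chosen to represent $h$; and for fixed $h$ it does not depend on $T$, which — exactly as in the proof of Theorem \ref{thm:inv} — I would settle by invariance under the Pachner moves. The only categorical input beyond the construction of the Turaev-Viro invariant is the elementary behaviour of $\aut$: for $\psi\in\aut$ and a scalar object $X$ one has $\psi(X)=\psi_X\,\id_X$ with $\psi_X\in\Bbbk^*$, and since $\psi$ is monoidal the endomorphism $\psi(X\pt Y\pt Z)=\psi(X)\pt\psi(Y)\pt\psi(Z)$ is multiplication by $\psi_X\psi_Y\psi_Z$; by naturality this scalar must fix the nonzero subobject $\Bbbi$ whenever $\Bbbi\hookrightarrow X\pt Y\pt Z$, so $\psi_X\psi_Y\psi_Z=1$, and in particular $\psi_{X^{\vee}}=\psi_X^{-1}$ (take $\Bbbi\hookrightarrow X\pt X^{\vee}$, using $\psi_{\Bbbi}=1$). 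This last identity already shows that $(\alpha:c):=\prod_{e\in T^1}\alpha^e_{c(e)}$ is insensitive to the orientations chosen for the edges of $T$, so that the state sum $Y_{\C}(M,h;T,\alpha)$ computed from $T$ and $\alpha$ is meaningful.

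First I would fix $T$ and vary $\alpha$. Two cycles $\alpha,\alpha'$ representing the same class $h\in H_1(M,\aut)$ differ by a boundary, $\alpha'=\alpha\cdot\partial\gamma$ with $\gamma=\sum_{f\in T^2}\gamma^f f$ a simplicial $2$-chain, $\gamma^f\in\aut$ (for a singular triangulation one uses $\Delta$-complex homology, which still computes $H_\ast(M,\aut)$, $\aut$ being abelian). Collecting the extra factor face by face,
\[
(\alpha':c)=(\alpha:c)\cdot\prod_{f\in T^2}(\gamma^f)_{c(x_1x_2)}\,(\gamma^f)_{c(x_2x_3)}\,(\gamma^f)_{c(x_3x_1)},
\]
where $f=(x_1x_2x_3)$ is oriented so that $\partial f=(x_1x_2)+(x_2x_3)+(x_3x_1)$. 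By condition (ii) in the definition of a colouring, $\Bbbi$ is a subobject of $c(x_1x_2)\pt c(x_2x_3)\pt c(x_3x_1)$, so each three-term product equals $1$ by the remark above; hence $(\alpha':c)=(\alpha:c)$ for every $c\in Col(T)$, and $Y_{\C}(M,h;T,\alpha)$ depends only on $(M,h,T)$. Write it $Y_{\C}(M,h;T)$.

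It remains to show $Y_{\C}(M,h;T)$ is unchanged by a Pachner move; invariance under ambient isotopy is immediate. Let $T'$ be obtained from $T$ by a single move of type (1-4) or (2-3). The move is supported in a ball, so $T'$ has exactly the edges of $T$ together with the edges created inside that ball — the four edges joining the new $0$-simplex to $x_1,\dots,x_4$ for a (1-4) move, the single edge $(ij)$ for a (2-3) move — while $n_0(T')=n_0(T)+1$ in the first case and $n_0(T')=n_0(T)$ in the second. \emph{Here is the crucial point}: because $\alpha$ is a cycle and the $0$-simplices of $T$ keep all their previously incident edges, the cycle condition for an extension $\tilde\alpha$ of $\alpha$ over $T'$, read at each old $0$-simplex met by the move, forces $\tilde\alpha$ to equal the neutral element $\id_{1_{\C}}$ on every new edge; this $\tilde\alpha$ is a cycle representing $h$, and the new edges contribute the scalar $1$, so $(\tilde\alpha:c')=(\alpha:c'_{T})$ for every $c'\in Col(T')$. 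Substituting this into the Turaev-Viro Pachner identities recalled in the proof of Theorem \ref{thm:inv} — $w_{c}W_{c}=\dc^{-1}\sum_{c'_{T}=c}w_{c'}W_{c'}$ for a (1-4) move and $w_{c}W_{c}=\sum_{c'_{T}=c}w_{c'}W_{c'}$ for a (2-3) move, from the construction of the invariant in \cite{Tu} — and accounting for the powers of $\dc$, one gets $Y_{\C}(M,h;T',\tilde\alpha)=Y_{\C}(M,h;T)$; by the previous paragraph this equals $Y_{\C}(M,h;T',\beta)$ for every cycle $\beta$ representing $h$ on $T'$, so $Y_{\C}(M,h;T')=Y_{\C}(M,h;T)$. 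Since any two (possibly singular) triangulations of $M$ are related by a finite sequence of such moves and isotopies \cite{BW,Pachner}, $Y_{\C}(M,h)$ is well defined. The only delicate ingredient is this extension-by-the-neutral-element across a Pachner move — i.e. that the twist factor is inert on the newly introduced simplices — after which everything rests on the already-established invariance of the Turaev-Viro state sum and the monoidality of elements of $\aut$. (Alternatively, using Proposition \ref{pro:gradaut} together with the same two computations one checks that $(\alpha:c)$ depends only on the homotopy class $x_c\in[M,B\grad]$ of $c$; writing $(h:x):=(\alpha:c)$ for any $c\in Col_x(T)$ one then gets $Y_{\C}(M,h)=\sum_{x\in[M,B\grad]}(h:x)\,HTV_{\C}(M,x)$, and well-definedness becomes an immediate corollary of Theorem \ref{thm:inv}.)
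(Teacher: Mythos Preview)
Your proof is correct and follows essentially the same two-step scheme as the paper: independence of the representative of $h$ via the face-by-face cancellation coming from monoidality of elements of $\aut$, and then triangulation-independence via the usual Pachner identities with the twist trivial on the new simplices. The only cosmetic difference is that the paper phrases the second step as ``replace $\alpha$ by a representative that is trivial on the contractible Pachner region,'' whereas you argue directly that the unique cycle extension of $\alpha$ to $T'$ is trivial on the new edges; these are the same observation, and your parenthetical alternative via Proposition~\ref{pro:gradaut} and Theorem~\ref{thm:inv} is exactly the route taken in Theorem~\ref{yetterHQFT}.
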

\pf
Let us show that for every spherical category $\C$, $Y_{\C}$ does not depend on the choice of a representative of h. First, the scalar $\dis{\prod_{e\in T^1}\alpha^e_{c(e)}}$ does not depend on the choice of the orientation of the 1-simplexes. Indeed for every 1-simplex $e$, we have : $\alpha^{\ov{e}}=(\alpha^e)^{-1}$, with $\ov{e}$ the 1-simplex $e$ endowed with the opposite orientation. According to the proposition \ref{pro:gradaut}, for every oriented 1-simplex $(01)$, the monoidal automorphism $\alpha^{(01)}$ is determined by a group morphism $\epsilon^{(01)}\in Hom(\grad,\Bbbk^*)$. The morphism $\epsilon^{(01)}$  verifies the relation : $\epsilon^{(01)}=(\epsilon^{(10)})^{-1}=\epsilon^{(10)}$.

Let $\alpha$ and $\alpha'$ be two representative of $h\in H_1(M,\aut)$, there exists a 2-chain $\dis{\beta=\sum_{f\in T^2} \beta^ff}$, such that $\alpha'=\alpha\delta(\beta)$, with $\delta$ the boundary operator. If $\beta=\beta^{(012)}(012)$ then we have~:
\begin{align*}
\alpha'^{(01)}&=\alpha^{(01)}\beta^{(012)}\, ,\\
\alpha'^{(12)}&=\alpha^{(12)}\beta^{(012)}\, ,\\
\alpha'^{(02)}&=\alpha^{(02)}\beta^{(012)}\, ,\\
\alpha'^{e}&=\alpha^{e} \qquad \mbox{if $e$ is not a subsimplex of $(012)$} \, .
\end{align*}
Let $\epsilon^{(012)}\in Hom(\grad,\Bbbk^*)$ be the group morphism which defined the monoidal automorphism $\beta^{(012)}$.

If $\alpha'=\alpha\delta\beta$, with $\dis{\beta=\sum_{f\in T^2}\beta^ff}$, since for every scalar object $X$, $\beta_X^{(012)}=(\beta_X^{(012)})^{-1}=\epsilon^{(012)}(X)\id_X$, it follows for every coloring $c\in Col(T)$~:
\begin{align*}
\prod_{e\in T^1}\alpha'^e_{c(e)} &= \prod_{e\in T^1}\alpha^e_{c(e)} \prod_{e\in T^1}\prod_{\substack{f\\ e<f}}\epsilon^f(\cs{c(e)})\\
&= \prod_{e\in T^1}\alpha^e_{c(e)} \prod_{f\in T^2}\epsilon^f(\cs{\hat{f}_1})\epsilon^f(\cs{\hat{f}_2})\epsilon^f(\cs{\hat{f}_3}),
\end{align*}
where $\hat{f}_i$ is the 1-simplex obtained from $f$ by removing the 0-simplex $i$. We set $f=(012)$, we have : $\epsilon^f(\cs{\hat{f}_1})\epsilon^f(\cs{\hat{f}_2})\epsilon^f(\cs{\hat{f}_3})=\epsilon^f(\cs{c(12)})\epsilon^f(\cs{c(20)})\epsilon^f(\cs{c(01)})=1$, since $\epsilon^f\in Hom(\grad,\Bbbk^*)$. Thus for a spherical category $\C$, $Y_{\C}(M,h)$ does not depend on the choice of a representative of $h$. The proof of the independence on the choice of the triangulation is the same as the proof of Yetter \cite{homyetter}. We replace the cycle by a trivial cycle since the region removed and replaced by the Pachner is contractible. Finally, the proofs of invariance under the Pachner moves is the same as the proof of the Turaev-Viro invariance under the Pachner moves.
\qed

%
%We can extend this invariant to manifolds with boundary. The construction is similar to the construction of the Turaev-Viro invariant. Let $M$ be a 3-manifold with boundary $\Sigma$, and $T_0$ be a triangulation of $\Sigma$. For every coloring $c_0\in Col(T_0)$ and for every $h\in H_1(M,\aut)$, we set~:
%$$
%Y_{\C}(M,h,c_0)=\dc^{-n_0(T)+n_0(T_0)/2}\sum_{c\in Col_{c_0}}\prod_{e\in T^1}\alpha_{c(e)}^ew_cW_c\in V_{\C}(\Sigma,T_0,c_0)\, ,
%$$
%with $\alpha$ a representative of $h$. This vector is an invariant for the pair $(M,h)$. We set~:
%$$
%Y_{\C}(M,h)=\sum_{c\in Col(T_0)}Y_{\C}(M,h,c)\, .
%$$
\begin{theorem}\label{yetterHQFT}
Let $\C$ be a spherical category, $M$ be 3-manifold.  For every $h\in H_1(M,\aut)$, we have~:
$$
Y_{\C}(M,h)=\sum_{x\in[M:B\grad]}(h:x)HTV_{\C}(M,x)\, ,
$$
with $\dis{(h:x)=\prod_{e\in T^1}\alpha^e_{c(e)}}$, $\alpha$ a representative of $h$ and $c\in Col_x(T)$.
\end{theorem}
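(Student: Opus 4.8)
The plan is to rerun the bookkeeping behind the splitting (\ref{TVsplit}), now carrying along the extra weight $\prod_{e\in T^1}\alpha^e_{c(e)}$. Fix a triangulation $T$ of $M$ (we take $M$ closed, as in the definition of $Y_{\C}$) and a representative $\alpha$ of $h$. Starting from (\ref{YTV}) and using the partition $Col(T)=\coprod_{x\in[M,B\grad]}Col_x(T)$, we obtain
$$Y_{\C}(M,h)=\dc^{-n_0(T)}\sum_{x\in[M,B\grad]}\;\sum_{c\in Col_x(T)}\Big(\prod_{e\in T^1}\alpha^e_{c(e)}\Big)w_cW_c\,.$$
Thus the theorem follows as soon as we show that the weight $\prod_{e\in T^1}\alpha^e_{c(e)}$ is \emph{constant} on each class $Col_x(T)$: denoting that common value $(h:x)$, we pull it out of the inner sum and recognize the remaining $\dc^{-n_0(T)}\sum_{c\in Col_x(T)}w_cW_c$ as $HTV_{\C}(M,x)$ (Theorem \ref{thm:inv}). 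When $Col_x(T)=\emptyset$ the corresponding term vanishes and $(h:x)$ is irrelevant.

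The first step is to push the weight into the graduator. By Proposition \ref{pro:gradaut} each monoidal automorphism $\alpha^e$ is given by a group morphism $\epsilon^e\in Hom(\grad,\Bbbk^*)$ with $\alpha^e_X=\epsilon^e(\cs{X})$ for every scalar object $X$, so $\prod_{e\in T^1}\alpha^e_{c(e)}=\prod_{e\in T^1}\epsilon^e(\cs{c(e)})$; since $\epsilon^{\ov{e}}=(\epsilon^e)^{-1}$ and $\cs{c(\ov{e})}=\cs{c(e)}^{-1}$, this product is independent of the orientations chosen for the $1$-simplexes. Moreover, under the isomorphism $\aut\cong Hom(\grad,\Bbbk^*)$ the cycle $\alpha$ corresponds to a simplicial $1$-cycle $(\epsilon^e)_e$ with coefficients in the abelian group $Hom(\grad,\Bbbk^*)$; concretely, for every $0$-simplex $v$ and every $g\in\grad$,
$$\prod_{e\,:\,\mathrm{tail}(e)=v}\epsilon^e(g)=\prod_{e\,:\,\mathrm{head}(e)=v}\epsilon^e(g)\,.$$

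Now take $c,c'\in Col_x(T)$. By the bijection (\ref{pro:suitebij}) the $\grad$-colorings $\cs{c}$ and $\cs{c'}$ lie in the same gauge orbit, so there is a gauge $\delta\in\m{G}_T$ with $\cs{c'(xy)}=\delta(x)\,\cs{c(xy)}\,\delta(y)^{-1}$ for every oriented $1$-simplex $(xy)$. Applying $\epsilon^{(xy)}$, a group morphism into the abelian group $\Bbbk^*$, yields
$$\prod_{(xy)\in T^1}\epsilon^{(xy)}\big(\cs{c'(xy)}\big)=\Big(\prod_{(xy)\in T^1}\epsilon^{(xy)}\big(\cs{c(xy)}\big)\Big)\cdot\prod_{v\in T^0}\frac{\displaystyle\prod_{e\,:\,\mathrm{tail}(e)=v}\epsilon^e(\delta(v))}{\displaystyle\prod_{e\,:\,\mathrm{head}(e)=v}\epsilon^e(\delta(v))}\,,$$
and each factor of the last product equals $1$ by the cycle identity above, applied with $g=\delta(v)$. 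Hence $\prod_{e}\alpha^e_{c(e)}=\prod_{e}\alpha^e_{c'(e)}$; we define $(h:x)$ to be this value. Substituting back gives $Y_{\C}(M,h)=\sum_{x}(h:x)HTV_{\C}(M,x)$, and independence of the choices of $T$ and of $\alpha$ is inherited from that of $Y_{\C}$ and of $HTV_{\C}$.

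The one substantive point is the vertex-by-vertex cancellation in the last display: this is the only place where the homological hypothesis $h\in H_1(M,\aut)$ — as opposed to an arbitrary assignment of automorphisms to edges — enters, and it reproduces the computation already made in the proof of the previous proposition (independence of $Y_{\C}$ from the chosen representative of $h$), with the coboundary $\delta\beta$ there replaced by the $\grad$-gauge $\delta$ here.
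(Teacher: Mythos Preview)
Your proof is correct and follows essentially the same route as the paper: partition $Col(T)$ by homotopy class, translate each $\alpha^e$ into a group morphism $\epsilon^e\in Hom(\grad,\Bbbk^*)$ via Proposition \ref{pro:gradaut}, and use the cycle condition on $\alpha$ to cancel the gauge contributions vertex by vertex, showing the weight is constant on each $Col_x(T)$. Your head/tail bookkeeping makes the cycle cancellation more explicit than the paper's version, but the argument is the same.
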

\pf
Let $h\in H_1(M,\aut)$ and $\alpha$ be a representative of $h$. For every coloring $c_0$ of $T_0$, we have~:
$$
Y_{\C}(M,x,c_0)=\left(\dc\right)^{-n_0(T)}\sum_{x\in [M:B\grad]}\sum_{c\in Col_x(T)}\prod_{e\in T^1}\alpha_{c(e)}^ew_cW_c\, .
$$
According to the proposition \ref{pro:gradaut}, for every 1-simplex $e$ there exists a unique group morphism $\epsilon^e$ from $\grad$ to $\Bbbk^*$ such that for every scalar object $X$ : $\alpha_X^e=\epsilon^e(\cs{X})$. Since $\dis{\alpha=\sum_e\alpha^ee}$ is a 1-chain, we have : $\dis{\delta \alpha=\sum_{i\in T^0}\sum_{e\in T^1}(\alpha^e)^{\pm 1}i=0}$, the sign is given by the following rule : $\delta(01)=0-1$. By definition of $Col_x(T)$, for every colorings $c,c'\in Col_x(T)$ there exists a gauge $\delta : T^0\fd \grad$ such that $|c'|=|c|^{\delta}$, it follows~:
\begin{equation*}
\prod_{e\in T^1}\alpha^e_{c'(e)}=\prod_{e\in T^1}\alpha^e_{c(e)}\prod_{i\in T^0}\prod_{e\in T^1,i\in e}\epsilon^e(\delta(i))^{-1}\, ,
\end{equation*}
the sign is given by the following relation : $|c|^{\delta}(01)=\delta(0)|c|(01)\delta^{-1}(1)$. Since $\alpha$ is a 1-chain, we obtain : $\prod_{e\in T^1}\alpha^e_{c(e)}=\prod_{e\in T^1}\alpha^e_{c'(e)}$. Thus this product only depends on the homotopy class of $x$ and does not depend on the choice of a representative of $h$. We set : $\dis{(\alpha:x)=\prod_{e\in T^1}\alpha^e_{c(e)}}$, for every coloring $c\in Col_x(T)$ and it follows : $\dis{Y_{\C}(M,\alpha)=\sum_{x\in [M,B\grad]}(h:x)HTV_{\C}(M,x)}$.
\qed

\section{Tables}\label{sec:table}
\subsection{Group categories}
\begin{center}
\begin{tabularx}{16cm}{|c|c|c|X|X|}
\hline
N & $\alpha$& Manifold & TV invariant & HTV invariant \\
\hline
\multirow{2}*{2} & \multirow{2}*{$\exp(2i\pi/4)$} & $L(2p+1,q)$ & 1/2 & 1/2 \\ \cline{3-5}
 & & $L(2p,q)$ & $1/2(1+(-1)^p)$ & $(1/2,(-1)^p/2)$ \\ \cline{3-5}
\hline
\multirow{6}*{3}& \multirow{6}*{$\exp(2i\pi/9)$} & $L(3p+2,q)$ & 1/3 & 1/3 \\ \cline{3-5}
& & $L(3p+1,q)$ & 1/3 & 1/3 \\ \cline{3-5}
&  & \multirow{2}*{$L(3p,q)$} & $1/3(1+2\exp(2i\pi p/3))$ &$(1/3,1/3\exp(2i\pi p/3),$ $1/3\exp(2i\pi p/3))$  \mbox{if $n = 1$ mod $3$}\\ \cline{4-5}
& & & $1/3(1+2\exp(4i\pi p/3))$ &$(1/3,1/3\exp(4i\pi p/3),$ $1/3\exp(4i\pi p/3))$ \mbox{if $n = 2$ mod $3$}\\\cline{4-5}
\hline
\end{tabularx}
\end{center}

\begin{center}
\begin{tabularx}{16cm}{|c|c|c|X|X|}
\hline
\multirow{5}*{4} & \multirow{5}*{$\exp(2i\pi/16)$} & $L(4p+3,q)$ & 1/4 & 1/4\\ \cline{3-5}
& & $L(4p+2,q)$ & $1/4(1+\exp(i\pi p/2))$ & $(1/4,1/4\exp(i\pi p/2))$  \\ \cline{3-5}
& & $L(4p+1,q)$ &1/4 & 1/4\\ \cline{3-5}
& & \multirow{2}*{$L(4p,q)$} & $1/4(1+3\exp(i\pi p/2))$ & $(1/4,1/4\exp(i\pi p/2),$ $1/4\exp(i\pi p/2),1/4\exp(i\pi p/2))$ if $n=1$ mod $4$ \\ \cline{4-5}
& &  & $1/4(1+exp(i\pi p /2)+2\exp(-i\pi p/2))$ & $(1/4,1/4\exp(-i\pi p/2),$ $1/4\exp(i\pi p/2),1/4\exp(-i\pi p/2))$ if $n=3$ mod $4$ \\ \cline{4-5}
\hline
\multirow{6}*{5} & \multirow{6}*{$\exp(2i\pi/25)$} & $L(5p+4,q)$ & 1/5 & 1/5 \\ \cline{3-5}
& & $L(5p+3,q)$ & 1/5 & 1/5 \\ \cline{3-5}
& & $L(5p+2,q)$ & 1/5 & 1/5 \\ \cline{3-5}
& & $L(5p+1,q)$ & 1/5 & 1/5 \\ \cline{3-5}
& & \multirow{2}*{$L(5p,q)$} & $1/5(1+2\exp(2i\pi p/5)+2\exp(-2i\pi p/5))$ & $(1/5,\exp(2i\pi p/5),\exp(-2i\pi p/5),$ $\exp(-2i\pi p/5),\exp(2i\pi p/5))$\\ \cline{4-5}
& & & $1/5(1+2\exp(4i\pi p/5)+2\exp(-4i\pi p/5))$ & $(1/5,\exp(-4i\pi p/5),\exp(4i\pi p/5),$ $\exp(4i\pi p/5), \exp(-4i\pi p/5))$ if $n=3$ mod 5\\ \cline{4-5}
\hline
\multirow{8}*{6} & \multirow{8}*{$\exp(2i\pi/36)$} & $L(6p+5,q)$ & 1/6 & 1/6\\ \cline{3-5}
& & $L(6p+4,q)$ & 1/6 & 1/6 \\ \cline{3-5}
& & \multirow{2}*{$L(6p+3,q)$} & $1/6(1+2\exp(4i\pi (2p+1)/6))$& $(1/6,1/6\exp(4i\pi (2p+1)/6),1/6\exp(4i\pi (2p+1)/6))$ if $n=1$ or $4$ mod 6\\ \cline{4-5}
& &  & $1/6(1+2\exp(-4i\pi (2p+1)/6))$& $(1/6,1/6\exp(-4i\pi (2p+1)/6),1/6\exp(-4i\pi (2p+1)/6))$ if $n=2$ or $5$ mod 6\\ \cline{3-5}
& & $L(6p+2,q)$ & $1/6(1+(-1)^{p+1})$&$(1/6,(-1)^{p+1}/6)$ \\ \cline{3-5}
& & $L(6p+1,q)$ & 1/6 & 1/6 \\ \cline{3-5}
& & \multirow{2}*{$L(6p,q)$} &$1/6(1+(-1)^p+2\exp(2i\pi p/6)+2\exp(-4i\pi p/6))$ & $(1/6,1/6\exp(2i\pi p/6),$ $ 1/6\exp(-4i\pi p/6),$ $(-1)^p/6,1/6\exp(-4i\pi p/6),$ $1/6\exp(2i\pi p/6))$ if $n=1$ mod 6\\ \cline{4-5}
& &  &$1/6(1+(-1)^p+2\exp(-2i\pi p/6)+2\exp(4i\pi p/6)$ & $(1/6,1/6\exp(-2i\pi p/6),$ $1/6\exp(4i\pi p/6),$ $(-1)^p/6,1/6\exp(4i\pi p/6),$ $1/6\exp(-2i\pi p/6))$ if $n=5$ mod 6\\ \cline{3-5}
\hline
\multirow{7}*{7} & \multirow{7}*{$\exp(2i\pi/49)$} & $L(7p+6,q)$ & 1/7 & 1/7\\ \cline{3-5}
& & $L(7p+5,q)$ & 1/7& 1/7 \\ \cline{3-5}
& & $L(7p+4,q)$ & 1/7& 1/7\\ \cline{3-5}
& & $L(7p+3,q)$ & 1/7& 1/7\\ \cline{3-5}
& & $L(7p+2,q)$ &1/7 &1/7 \\ \cline{3-5}
& & $L(7p+1,q)$ &1/7 & 1/7\\ \cline{3-5}
& & \multirow{2}*{$L(7p,q)$} & $1/7(1+2\exp(2i\pi p/7)+2\exp(4i\pi p/7)+2\exp(-6i\pi p/7))$& $(1/7,1/7\exp(2i\pi p/7),$ $1/7\exp(-6i\pi p/7),1/7\exp(4i\pi p/7),$ $1/7\exp(4i\pi p/7),1/7\exp(-6i\pi p/7),$ $1/7\exp(2i\pi p/7))$ if $n=1,2$ or $4$ mod 7 \\ \cline{4-5}
& &  & $1/7(1+2\exp(-2i\pi p/7)+2\exp(-4i\pi p/7)+2\exp(6i\pi p/7))$& $(1/7,1/7\exp(-2i\pi p/7),$ $1/7\exp(-4i\pi p/7),1/7\exp(6i\pi p/7),$ $1/7\exp(6i\pi p/7),1/7\exp(-4i\pi p/7),$ $1/7\exp(-2i\pi p/7))$ if $n=3,5$ or $6$ mod 7\\
\hline
\end{tabularx}
\end{center}

\begin{center}
\begin{tabularx}{16cm}{|c|c|c|X|X|}
\hline
N & $\alpha$ & Manifold & TV invariant & HTV invariant \\
\hline
\multirow{6}*{8} & \multirow{8}*{$\exp(2i\pi/64)$} & $L(8p+7,q)$ & 1/8 & 1/8 \\ \cline{3-5}
& & $L(8p+6,q)$ & 1/8 & 1/8 \\ \cline{3-5}
& & $L(8p+5,q)$ & 1/8 & 1/8 \\ \cline{3-5}
& & $L(8p+4,q)$ &$(-1)^p/8$ &$(1/8,-1/8,(-1)^p/8)$ \\ \cline{3-5}
& & $L(8p+3,q)$ & 1/8 & 1/8 \\ \cline{3-5}
& & $L(8p+2,q)$ & 0 & $(1/8,-1/8)$ \\ \cline{3-5}
& & $L(8p+1,q)$ & 1/8 & 1/8 \\ \cline{3-5}
& & \multirow{3}*{$L(8p,q)$} &$ 1/4((-1)^p+2\exp(i\pi p/4))$ & $(1/8,1/8\exp(i\pi /4), (-1)^p/8,$ $1/8\exp(i\pi p/4),-1/8,1/8\exp(i\pi p/4),$ $(-1)^p/8,1/8\exp(i\pi p/4)$ if $n=1$ mod 8\\ \cline{4-5}
& & &$ 1/4((-1)^p+2\exp(3i\pi p/4))$ & $(1/8,1/8\exp(3i\pi /4), (-1)^p/8,$ $1/8\exp(3i\pi p/4),-1/8,1/8\exp(3i\pi p/4),$ $(-1)^p/8,1/8\exp(3i\pi p/4)$ if $n=3$ mod 8\\ \cline{4-5}
& & &$ 1/4((-1)^p+2\exp(-3i\pi p/4))$ & $(1/8,1/8\exp(-3i\pi /4), (-1)^p/8,$ $1/8\exp(-3i\pi p/4),-1/8,$ $1/8\exp(-3i\pi p/4),(-1)^p/8,$ $1/8\exp(-3i\pi p/4)$ if $n=5$ mod 8\\ \cline{3-5}
\hline
\multirow{12}*{9} & \multirow{9}*{$\exp(2i\pi/81)$} & $L(9p+8,q)$ & 1/9 & 1/9 \\ \cline{3-5}
& & $L(9p+7,q)$ & 1/9 & 1/9 \\ \cline{3-5}
& & $L(9p+6,q)$ & 1/9 & 1/9 \\ \cline{3-5}
& & $L(9p+5,q)$ & 1/9 & 1/9 \\ \cline{3-5}
& & $L(9p+4,q)$ & 1/9 & 1/9 \\ \cline{3-5}
& & $L(9p+3,q)$ & $3/9$ & $(1/9,1/9,1/9)$\\ \cline{3-5}
& & $L(9p+2,q)$ & 1/9 & 1/9 \\ \cline{3-5}
& & $L(9p+1,q)$ & 1/9 & 1/9 \\ \cline{3-5}
& & \multirow{4}*{$L(9p,q)$} & $1/9(3+2\exp(2i\pi p/9)+2\exp(8i\pi p/9)+\exp(4i\pi p/9)+\exp(-4i\pi p/9))$ & $(1/9,1/9\exp(2i\pi p/9),1/9\exp(8i\pi p/9),$ $1/9\exp(4i\pi p/9),1/9\exp(-4i\pi p/9),$ $1/9,1/9\exp(2i\pi p/9),1/9\exp(8i\pi p/9),$ $1/9)$ if $n=1$ mod 9\\ \cline{4-5}
& & & $1/9(3+2\exp(4i\pi p/9)+2\exp(-4i\pi p/9)+\exp(8i\pi p/9)+\exp(-8i\pi p/9))$ & $(1/9,1/9\exp(4i\pi p/9),$ $ 1/9\exp(-4i\pi p/9),1/9\exp(8i\pi p/9),$ $1/9\exp(-8i\pi p/9),1/9,$ $1/9\exp(4i\pi p/9),1/9\exp(-4i\pi p/9),$ $1/9)$ if $n=2$ mod 9\\ \cline{4-5}
& & & $1/9(3+2\exp(8i\pi p/9)+2\exp(2i\pi p/9)+2\exp(-4i\pi p/9))$ & $(1/9,1/9\exp(8i\pi p/9),$ $1/9\exp(2i\pi p/9),1/9\exp(-4i\pi p/9),$ $1/9\exp(8i\pi p/9),1/9,1/9\exp(2i\pi p/9),$ $1/9\exp(-4i\pi p/9),$ $1/9)$ if $n=4$ or $7$ mod 9\\ \cline{4-5}
& & & $1/9(3+2\exp(4i\pi p/9)+2\exp(-2i\pi p/9)+2\exp(-8i\pi p/9))$ & $(1/9,1/9\exp(4i\pi p/9),$ $1/9\exp(-2i\pi p/9),1/9\exp(-8i\pi p/9),$ $1/9,1/9\exp(4i\pi p/9),1/9\exp(-2i\pi p/9),$ $1/9\exp(-8i\pi p/9),1/9)$ if $n=5$ or 8 mod 9\\ \cline{4-5}
\hline
%\multirow{6}*{10} & \multirow{10}*{$\exp(2i\pi/100)$} & $L(10p+9,q)$ & 1/10&1/10 \\ \cline{3-5}
%& & $L(10p+8,q)$ & 1/10& 1/10\\ \cline{3-5}
%& & $L(10p+7,q)$ & 1/10& 1/10 \\ \cline{3-5}
%& & $L(10p+6,q)$ & 1/10& 1/10 \\ \cline{3-5}
%& & $L(10p+5,q)$ & & \\ \cline{3-5}
%& & $L(10p+4,q)$ & 1/10& 1/10 \\ \cline{3-5}
%& & $L(10p+3,q)$ & 1/10& 1/10 \\ \cline{3-5}
%& & $L(10p+2,q)$ & & \\ \cline{3-5}
%& & $L(10p+1,q)$ & 1/10& 1/10 \\ \cline{3-5}
%& & $L(10p,q)$ & & \\
%\hline
\end{tabularx}
\end{center}

\bibliographystyle{amsplain}
\bibliography{E:/these/biblio/biblioordre}

\providecommand{\bysame}{\leavevmode\hbox to3em{\hrulefill}\thinspace}
\providecommand{\MR}{\relax\ifhmode\unskip\space\fi MR }
% \MRhref is called by the amsart/book/proc definition of \MR.
\providecommand{\MRhref}[2]{%
  \href{http://www.ams.org/mathscinet-getitem?mr=#1}{#2}
}
\providecommand{\href}[2]{#2}
\begin{thebibliography}{10}

\bibitem{BW}
J.W. Barrett and B.W. Westbury, \emph{{Invariants of piecewise-linear
  3-manifolds}}, Trans. Amer. Math. Soc. \textbf{348} (1996), no.~10,
  3997--4022.

\bibitem{BHMV}
C.~Blanchet, N.~Habegger, G.~Masbaum, and P.~Vogel, \emph{{Topological quantum
  field theories derived from the Kauffman bracket}}, {Topology} \textbf{34}
  (1995), no.~4, 883--927.

\bibitem{6j}
J.~S. Carter, D.~E. Flath, and M.~Saito, \emph{The classical and quantum
  6j-symbols}, vol. Mathematical Notes, Princeton University Press, 1995.

\bibitem{FK}
J.~Fr\"ohlich and T.~Kerler, \emph{{Quantum groups, quantum categories and
  quantum field theories}}, {Lecture Notes in Mathematics}, vol. 1542,
  {S}pringer-{V}erlag, {B}erlin, {N}ew-{Y}ork, 1993.

\bibitem{GK}
I.~Gelfand and D.~Kazhdan, \emph{{Invariant of three dimensional manifolds}},
  Geometric and Functional Analysis \textbf{6} (1996), no.~2, 268--300.

\bibitem{Jacorubin}
W.~Jaco and J.~H. Rubinstein, \emph{0-efficient triangulations of 3-manifolds},
  J. differential geometry \textbf{65} (2003), 61--168.

\bibitem{Kassel}
C.~Kassel, \emph{{Q}uantum groups}, {Graduate text in Mathematics}, vol. 155,
  {S}pringer-{V}erlag, {B}erlin, {N}ew-{Y}ork, 1995.

\bibitem{TL}
T.~Le and V.~Turaev, \emph{Quantum groups and ribbon g-categories}, J. Pure
  Appl. Algebra \textbf{178} (2003), no.~2, 169--185.

\bibitem{matveev}
S.~Matveev, \emph{Algorithmic topology and classification of 3-manifolds},
  second ed., Algorithms and Computation in Mathematics, no.~9, Springer,
  Berlin, 2007.

\bibitem{Moore}
G.~Moore and N.~Seiberg, \emph{{Classical and quantum field theory}}, {Comm.
  Math Phy.} \textbf{123} (1989), 177--254.

\bibitem{Pachner}
U.~Pachner, \emph{{P.L. homeomorphic manifolds are equivalent by elementary
  shellings}}, {European journal of Combinatorics} (1991), 129--145.

\bibitem{TVDW}
J.~Petit, \emph{The turaev-viro invariant associated to group categories},
  preprint (2009).

\bibitem{Roberts}
J.~Roberts, \emph{{Skein theory and Turaev-Viro invariants}}, {Topology}
  \textbf{34} (1995), 771--787.

\bibitem{THQFT}
V.~Turaev, \emph{Homotopy field theory in dimension 3 and crossed
  group-categories}, arXiv:math/0005291, 2000.

\bibitem{Tu}
V.G. Turaev, \emph{{Quantum invariants of knots and 3-manifolds}}, {Walter de
  Gruyter}, 1994.

\bibitem{TV}
V.G. Turaev and O.~Viro, \emph{{State sum invariants of 3-manifolds and quantum
  6j-symbol}}, {Topology} \textbf{31} (1992), 865--902.

\bibitem{homyetter}
D.N. Yetter, \emph{{Homologically Twisted Invariants Related to (2+1)- and
  (3+1)-Dimensional State-Sum Topological Quantum Field Theories}},
  arXiv:hep-th/9311082 (1993).

\end{thebibliography}
\end{document}